\documentclass[a4paper]{amsart}

\usepackage[english]{babel}
\usepackage[utf8x]{inputenc}
\usepackage[T1]{fontenc}
\usepackage{amsthm}
\usepackage{amsmath}
\theoremstyle{plain}
\usepackage{chngcntr}
\usepackage{relsize}

\newtheorem{Lemma}{Lemma}
\newtheorem{Theorem}[Lemma]{Theorem}

\newtheorem{Corollary}[Lemma]{Corollary}

\newtheorem{Definition}[Lemma]{Definition}

\usepackage[a4paper,top=3cm,bottom=2cm,left=3cm,right=3cm,marginparwidth=1.75cm]{geometry}

\usepackage{float}
\usepackage{nccmath}
\usepackage{mathtools}
\usepackage{amsfonts}
\usepackage{amsmath}
\usepackage{graphicx}
\usepackage[colorinlistoftodos]{todonotes}
\usepackage[colorlinks=true, allcolors=blue]{hyperref}

\title{\textbf{Diophantine Approximation with Prime Restriction in Real Quadratic Number Fields}}
\date{\today}
\subjclass[2010]{%
	Primary
	11J71; 
	Secondary
	11J17, 
	11K60, 
	11J25,  
	11R11, 
	11R44, 
	11L20, 
	11N35, 
    11N36, 
	11L07, 
	11D79, 
	11H55, 
	11J70,  
}
\keywords{distribution modulo one,  Diophantine approximation, real quadratic fields, sieves, distribution of prime ideals, smoothed sums, Poisson summation, roots of quadratic congruences, binary quadratic forms, continued fractions, Lebesgue measure}

\author{Stephan~Baier}
\address{Stephan~Baier\\%
	Ramakrishna Mission Vivekananda Educational Research Institute\\%
	Department of Mathematics\\%
	G.\ T.\ Road, PO~Belur Math, Howrah, West Bengal~711202\\%
	India}
\email{stephanbaier2017@gmail.com}

\author{Dwaipayan~Mazumder}
\address{Dwaipayan~Mazumder\\%
	Ramakrishna Mission Vivekananda Educational Research Institute\\%
	Department of Mathematics\\%
	G.\ T.\ Road, PO~Belur Math, Howrah, West Bengal~711202\\%
	India}
\email{dwmaz.1993@gmail.com}

\begin{document}
\begin{abstract}
	The distribution of $\alpha p$ modulo one, where $p$ runs over the rational primes and $\alpha$ is a fixed irrational real, has received a lot of attention. It is natural to ask for which exponents $\nu>0$ one can establish the infinitude of primes $p$ satisfying $||\alpha p||\le p^{-\nu}$. The latest record in this regard is Kaisa Matom\"aki's landmark result $\nu=1/3-\varepsilon$ which presents the limit of currently known technology. Recently, Glyn Harman, and, jointly, Marc Technau and the first-named author, investigated the same problem in the context of imaginary quadratic fields. Glyn Harman obtained an analog for $\mathbb{Q}(i)$ of his result in the context of $\mathbb{Q}$, which yields an exponent of $\nu=7/22$. Marc Technau and the first-named author produced an analogue of Bob Vaughan's result $\nu=1/4-\varepsilon$ for all imaginary quadratic number fields of class number 1. In the present article, we establish an analog of the last-mentioned result for real quadratic fields of class number 1 under a certain Diophantine restriction. This setting involves the additional complication of an infinite group of units in the ring of integers. Moreover, although the basic sieve approach remains the same (we use an ideal version of Harman's sieve), the problem takes a different flavor since it becomes truly 2-dimensional. We reduce it eventually to a counting problem which is, interestingly, related to roots of quadratic congruences. To approximate them, we use an approach by Christopher Hooley based on the theory of binary quadratic forms.  
\end{abstract}
\maketitle
\tableofcontents

\section{Introduction and main results}
\subsection{History}
Dirichlet's approximation theorem for the rationals implies that for every $x_0\in \mathbb{R}\setminus \mathbb{Q}$, there are infinitely many pairs $(p,q)$ of coprime integers $p$ and $q$ such that 
$$
\left|x_0-\frac{p}{q}\right|\le q^{-2}.
$$ 
Whereas the above statement is easy to prove just using pigeonhole principle or the continued fraction expansion of $x_0$, the problem of Diophantine approximation by fractions with denominator restricted to primes is hard. This problem has a long history and triggered the development of important tools in analytic number theory. It is not difficult to prove using the generalized Riemann Hypothesis for Dirichlet $L$-functions that for every $\varepsilon>0$ and $\nu=1/3$, there are infinitely many pairs $(p,\pi)$ with $p$ an integer and $\pi$ a prime such that
\begin{equation} \label{primeapprox}
\left|x_0-\frac{p}{\pi}\right|\le \pi^{-1-\nu+\varepsilon},
\end{equation}
but an improvement beyond $\nu=1/3$ depends on very strong assumptions on primes in arithmetic progressions. One might expect that \eqref{primeapprox} holds for infinitely many pairs $(p,\pi)$ if $\nu=1$. Important unconditional results started with Vinogradov \cite{Vino} who showed that $\nu=1/5$ is admissible using his intricate non-trivial treatment of trigonometrical sums over primes. Vaughan \cite{Vau} simplified this treatment introducing his famous identity for sums over primes and improved the exponent to $\nu=1/4$ by refining Fourier-analytic arguments in Vinogradov's method. Harman \cite{Har1} developed a new sieve method which enabled him to establish the exponent $\nu=3/10$. These results were subsequently improved by Jia and Harman in several papers. In particular, Harman got the exponent $\nu=7/22$ in \cite{Har2}. Heath-Brown and Jia \cite{HeJia} brought Kloosterman sums into the picture and made a number of further innovations to reach $\nu=16/49$ which falls short of the exponent $1/3$. Finally, in a landmark paper, Matom\"aki \cite{Ma} managed to reach $\nu=1/3$ by using bounds for averages of Kloosterman sums. This is considered the limit of the current technology.

It is interesting to put the said problem on restricted Diophantine approximation in the framework of number fields. Novel ideas are required to make the classical methods work in this context, and so far there are only a few recent results in this regard. The first-named author \cite{Bai} extended the classical problem, in slightly generalized form, to $\mathbb{Q}(i)$ and his method led to an exponent of $\nu=1/12$.  Harman \cite{Har2} established the full analog of his above-mentioned result with $\nu=7/22$ for $\mathbb{Q}(i)$ by introducing a number of novelties into his method, in particular, a clever estimation of trigonometrical sums over 
regions of $\mathbb{C}$ with sharp cutoff, which are more difficult to handle than trigonometrical sums over intervals in $\mathbb{R}$ since the geometry of the 
regions comes into play. The first-named author and Technau \cite{BaiTech} considered the problem for all imaginary quadratic number fields of class number 1 and 
obtained an exponent corresponding to $\nu=1/4$. They avoided the said trigonometrical sums with sharp cutoff by using a smoothed version of Harman's sieve for 
imaginary quadratic fields and Poisson summation to transform smooth trigonometrical sums over the entire complex plane. Their result can be put in the following form. 

\begin{Theorem} \label{Bai-Tech}
Assume that $\mathbb{Q}(\sqrt{-d})$ with $d>1$ a square-free positive integer has class number 1. Let $x_0 \in \mathbb{C}\setminus \mathbb{Q}(\sqrt{-d})$.
Then there exist infinitely many non-zero prime ideals $\mathfrak{p}$ in $\mathcal{O}$, the ring of integers of $\mathbb{Q}(\sqrt{-d})$,  
such that 
\begin{equation*}
\left|x_0-\frac{p}{q}\right|\le \mathcal{N}(\mathfrak{p})^{-1/2-1/8+\varepsilon} 
\end{equation*}
for some generator $q$ of $\mathfrak{p}$ and $p\in \mathcal{O}$, where 
$\mathcal{N}(\mathfrak{p})$ denotes the norm of $\mathfrak{p}$. 
\end{Theorem}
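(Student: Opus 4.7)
The plan is to adapt Vaughan's classical approach to the setting of the imaginary quadratic field $K=\mathbb{Q}(\sqrt{-d})$ of class number $1$, using a smoothed version of Harman's sieve formulated directly for the ideals of $\mathcal{O}$. Fix smooth compactly supported weights $\Phi:\mathbb{C}\to[0,\infty)$ concentrated on the shell $|q|\asymp N^{1/2}$ and $\Psi:\mathbb{C}\to[0,\infty)$ concentrated near $0$ at scale $\delta\asymp N^{-1/8+\varepsilon}$, so that $\Psi(x_0q-p)$ exactly detects $|x_0-p/q|\le\mathcal{N}(\mathfrak{p})^{-1/2-1/8+\varepsilon}$ on the chosen shell. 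I would then study the weighted prime count
\[
S(N)=\sum_{\mathfrak{p}\subset\mathcal{O}}\sum_{\substack{q\text{ generates }\mathfrak{p}\\ q\in\mathcal{F}}}\sum_{p\in\mathcal{O}}\Phi(q)\,\Psi(x_0 q-p),
\]
where $\mathcal{F}\subset\mathbb{C}$ is a fundamental domain for the action of the (finite) unit group of $\mathcal{O}$ on non-zero elements. A lower bound $S(N)\gg N^{3/4}/\log N$ as $N\to\infty$ forces the infinitude of prime ideals satisfying the approximation.

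For each fixed $q$, apply Poisson summation over the lattice $\mathcal{O}\subset\mathbb{C}$ to the inner sum in $p$, obtaining
\[
\sum_{p\in\mathcal{O}}\Psi(x_0q-p)=\frac{1}{\mathrm{covol}(\mathcal{O})}\sum_{\xi\in\mathcal{O}^{\ast}}\widehat{\Psi}(\xi)\,\mathrm{e}\!\left(\langle x_0 q,\xi\rangle\right),
\]
so that the rapid decay of $\widehat{\Psi}$ confines $\xi$ to the dual-lattice box of radius $\asymp\delta^{-1}=N^{1/8-\varepsilon}$. The frequency $\xi=0$ contributes the expected main term $\widehat{\Psi}(0)$ times a smooth sum over prime ideals of norm $\asymp N$, handled by the prime ideal theorem via the standard zero-free region for the trivial Hecke $L$-function on $K$. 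The non-zero frequencies produce oscillating sums, which I would process by invoking the ideal version of Harman's sieve to decompose the indicator of prime ideals as a combination of smooth bilinear forms of Type I (one variable in a short range) and Type II (both variables in balanced ranges).

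Type I contributions involve sums $\sum_{a}\alpha_a\sum_{b}\Phi(ab)\widehat{\Psi}(\xi)\mathrm{e}(\langle x_0 ab,\xi\rangle)$ with $a$ short; these are handled by a further Poisson summation in $b$ over $\mathcal{O}$, the hypothesis $x_0\notin K$ guaranteeing that the dual frequencies $x_0 a\xi$ stay bounded away from $\mathcal{O}^{\ast}$, so that $\widehat{\Psi}$ and $\widehat{\Phi}$ together provide the required decay. The delicate Type II sums
\[
T=\sum_{\xi\neq 0}\widehat{\Psi}(\xi)\sum_{a}\alpha_a\sum_{b}\beta_b\,\Phi(ab)\,\mathrm{e}\!\left(\langle x_0 ab,\xi\rangle\right),
\]
I would attack by Cauchy--Schwarz in $a$, then open the square and swap summation, reducing the off-diagonal to bilinear exponential sums $\sum_{a}\Phi(ab)\overline{\Phi(ab')}\,\mathrm{e}(\langle x_0 a(b-b'),\xi\rangle)$ in pairs $(b,b')$, to which Poisson summation in $a$ is again applied and the irrationality of $x_0$ again exploited. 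Balancing the admissible ranges of Harman's sieve precisely at $a,b\asymp N^{1/4}$ then yields the saving equivalent to Vaughan's classical $\nu=1/4$, i.e.\ the norm-exponent $\nu=1/2+1/8-\varepsilon$.

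The main obstacle is to carry out the Type II estimate cleanly: two nested applications of Poisson summation in $\mathbb{C}$ must be orchestrated while tracking the geometry of the lattices $\mathcal{O}$, $\mathcal{O}^{\ast}$ and their images under multiplication by $x_0$, and all four weights $\Phi,\widehat{\Phi},\Psi,\widehat{\Psi}$ have to be controlled through each transformation. The class-number-one hypothesis is used crucially to pass between prime ideals and their essentially unique generators, while the condition $x_0\notin K$ replaces the classical Diophantine constraint on $\alpha$ by ensuring that no shifted copy of $x_0\mathcal{O}$ can collapse onto $\mathcal{O}^{\ast}$; both ingredients together pin down the balanced exponent $1/2+1/8$ stated in the theorem.
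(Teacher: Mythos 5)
Your overall architecture matches the one used in \cite{BaiTech} and echoed in the present paper: smoothed weight functions on ideals, a version of Harman's sieve formulated directly for $\mathcal{I}$, Poisson summation over the lattice $\mathcal{O}\subset\mathbb{C}$, Cauchy--Schwarz on the Type II sums followed by a second Poisson summation. The class-number-one hypothesis is indeed used to pass between prime ideals and generators, and the finiteness of the unit group in the imaginary quadratic case is what makes the setup simpler than in the real quadratic situation treated here.

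There is, however, a genuine gap in the way you treat the Diophantine hypothesis on $x_0$. You assert that \emph{``the hypothesis $x_0\notin K$ guaranteeing that the dual frequencies $x_0 a\xi$ stay bounded away from $\mathcal{O}^{\ast}$''} (for Type I) and that \emph{``the irrationality of $x_0$ [is] again exploited''} (for Type II). This is not how irrationality is used, and the claim of a uniform separation is false: for a fixed irrational $x_0$ one cannot prevent $x_0 a\xi$ from being arbitrarily close to points of the dual lattice for some $(a,\xi)$ in the relevant ranges; one can only \emph{count} how often this happens. After the Poisson steps, both the Type I and Type II estimates reduce to counting how many $k\in\mathcal{O}$ in a box of size $\asymp R$ satisfy $\|x_0 k\|$ small, and the saving must come from a \emph{quantitative} Dirichlet approximation of $x_0$ by $p/q\in K$ with $\gcd(p,q)\approx 1$ and $\mathcal{N}(q)\asymp W$, tuned so that $|x_0-p/q|\ll\mathcal{N}(q)^{-1}$ (the analogue of Corollary \ref{diriapptheoremweak}). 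Replacing $x_0$ by $p/q$ converts the counting problem into a linear congruence modulo $\mathcal{N}(q)$, and the bound is then extracted via the standard lemma for sums of $\min(X,\|h\gamma\|^{-1})$ (cf.\ Lemma \ref{standardtool}). This approximation by elements of $K$ with denominator $W$ chosen \emph{as a function of $N$} is the engine of the whole estimate; your sketch omits it and so cannot close either the Type I or Type II bound, and hence cannot actually produce the exponent $1/8$. Once this ingredient is inserted (and the parameter $W$ optimized against $N$ and $\delta$), the rest of your plan goes through along the lines of the paper.
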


In simpler words, under the assumptions of the above theorem, there exist
infinitely many prime elements $\pi\in \mathcal{O}$ such that
\begin{equation*}
\left|x_0-\frac{p}{\pi}\right|\le |\pi|^{-1-1/4+\varepsilon}
\end{equation*}
for a suitable $p\in \mathcal{O}$. This corresponds to Vaughan's exponent $\nu=1/4$ in the classical setting. In this article, we prove an analog of Theorem \ref{Bai-Tech} for 
real quadratic number fields of class number 1 under a certain Diophantine restriction to the pairs $(x_1,x_2)\in \mathbb{R}^2$ replacing $x_0$ in this context. We will see that, in the sense of the Lebesgue measure, almost all of these $(x_1,x_2)$ will satisfy this restriction. In subsection \ref{GB}, we shall elaborate more about it. 

The real quadratic setting is more difficult and of different flavor because the problem becomes truly two-dimensional and we have to handle the infinite unit group. Therefore, we set up the problem in the context of ideals from the very beginning using weight functions on the ideals which themselves are smoothed sums over the set of their generators. Later, these sums are unfolded in order to make convenient use of Poisson summation. The ideal setup allows us to handle the infinite unit group. By the said smoothing, we avoid, similarly as in \cite{BaiTech}, unpleasant two-dimensional trigonometrical sums with sharp cutoff.

As in the setting of $\mathbb{Q}$ or $\mathbb{Q}(i)$ (or more generally, imaginary quadratic fields of class number 1), we need to introduce a Diophantine approximation to bound certain averages of trigonometrical sums at some stage. In the context of real quadratic fields, this is a simultaneous Diophantine approximation of our pair $(x_1,x_2)$ by a pair of conjugates in $\mathbb{Q}(\sqrt{d})$. We note that a simple approximation by a pair of rationals with the same denominator using a two-dimensional version of the Dirichlet approximation theorem turns out to be insufficient for our purposes (see subsection \ref{DA}).  We will be led to a counting problem for solutions of two-dimensional systems of linear congruences whose resolution depends, interestingly, on information about the Diophantine properties of roots of quadratic congruences, 
as established by Hooley \cite{Hoo} making use of the theory of quadratic forms. This is an interesting new feature which is not present in the imaginary-quadratic case. 

Our starting point will again be a smoothed version of Harman's sieve for quadratic number fields, where we here use a formulation with ideals 
instead of algebraic integers. Before we state our main theorem, we introduce some notations which will be used throughout this article and review Dirichlet approximation in real quadratic number fields.

\subsection{Notations}
\begin{itemize}
\item We assume that $d$ is a positive square-free integer satisfying $d\equiv 3
\bmod{4}$ in which case the ring of integers of $\mathbb{Q}(\sqrt{d})$ equals 
$\mathbb{Z}[\sqrt{d}]$.
\item We denote the ring of integers of $\mathbb{Q}(\sqrt{d})$ by $\mathcal{O}$ and write $\mathbb{K}:=\mathbb{Q}(\sqrt{d})$.
\item We denote the set of ideals in $\mathcal{O}$ by $\mathcal{I}$.
\item We denote the two embeddings of $\mathbb{Q}(\sqrt{d})$, given by the identity and conjugation, by 
$$
\sigma_1(\alpha+\beta\sqrt{d}):=\alpha+\beta\sqrt{d}
$$ 
and 
$$
\sigma_2(\alpha+\beta\sqrt{d}):=\alpha-\beta\sqrt{d}.
$$
\item We write $\sigma(\mathbb{K}):=\{(\sigma_1(\gamma),\sigma_2(\gamma)) : \gamma\in K\}$.
\item We assume that $\mathbb{Q}(\sqrt{d})$ has class number 1 so that all ideals in $\mathcal{O}$ are principal.
\item The norm of an ideal $\mathfrak{q}\in \mathcal{O}$ will be denoted by 
$\mathcal{N}(\mathfrak{q})$.
\item We write $\mathcal{N}(q)$ for the {\it modulus} of the norm over $\mathbb{Q}$ of an algebraic integer $q\in \mathcal{O}$, i.e. $\mathcal{N}(q)=\mathcal{N}((q))$.  
\item If $p,q\in \mathcal{O}$, we write gcd$(p,q)\approx t$ to mean that $t$ is a greatest common divisor of $p$ and $q$ in $\mathcal{O}$. We note that $t$ is unique up to units in $\mathcal{O}$. 
\item $(x_1,x_2)$ is a pair of real numbers which does not belong to 
$\sigma(\mathbb{K})$. 
\item $N$, $x$ and $\delta$ will be variables, where $N>1$ is a natural number, $x>1$
is a real number and $N^{-1}\le \delta\le 1$.
\item $\varepsilon$ is an arbitrary but fixed positive number. 
\item As usual, we write 
$f=O(g)$ or $f\ll g$ if the functions $f$ and $g$ satisfy $|f|\le c|g|$ for some positive constant $c$, and we write $f\asymp g$ if $f\ll g$ and $g\ll f$.
\item We allow all $O$-constants to depend on $\varepsilon$, $d$ and $(x_1,x_2)$.
\end{itemize}

\subsection{Dirichlet approximation in real quadratic number fields}
A general version of Dirichlet's approximation theorem for number fields was given in \cite[Theorem 1]{Que}. This implies the following version for the case of real quadratic number 
fields $\mathbb{Q}(\sqrt{d})$, where no congruence conditions on $d$ and no conditions on the class number are required here.

\begin{Theorem} \label{diriapptheoremstrong} There exists a constant $C>0$ with the following property. If $(x_1,x_2)\in \mathbb{R}^2\setminus \sigma(\mathbb{K})$, then there are infinitely many pairs 
$(p,q)\in \mathcal{O}\times (\mathcal{O}\setminus \{0\})$ such that 
$$
\left|x_i-\frac{\sigma_i(p)}{\sigma_i(q)}\right|\le \frac{C}{|\sigma_i(q)|(|\sigma_1(q)|+|\sigma_2(q)|)} \quad \mbox{ for } i=1,2.
$$
\end{Theorem}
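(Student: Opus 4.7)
The plan is to prove the theorem by geometry of numbers, namely by applying Minkowski's convex body theorem to a suitable four-dimensional lattice. I would consider the $\mathbb{Z}$-linear map
$$
\Phi : \mathcal{O}^2 \to \mathbb{R}^4, \qquad (p,q) \mapsto \bigl(\sigma_1(p) - x_1\sigma_1(q),\ \sigma_2(p) - x_2\sigma_2(q),\ \sigma_1(q),\ \sigma_2(q)\bigr).
$$
Relative to the $\mathbb{Z}$-basis $\{(1,0),(\sqrt{d},0),(0,1),(0,\sqrt{d})\}$ of $\mathcal{O}^2$, the matrix of $\Phi$ is block lower triangular, and both diagonal blocks coincide with the Minkowski embedding $\gamma \mapsto (\sigma_1(\gamma),\sigma_2(\gamma))$ of $\mathcal{O}$ in $\mathbb{R}^2$. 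Hence $\Lambda := \Phi(\mathcal{O}^2)$ is a full-rank lattice in $\mathbb{R}^4$ of covolume $(2\sqrt{d})^2 = 4d$, crucially \emph{independent} of $(x_1,x_2)$.

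For each parameter $Q>0$ I would then apply Minkowski's theorem to the symmetric convex box
$$
K(Q) := \bigl[-2\sqrt{d}/Q,\, 2\sqrt{d}/Q\bigr]^2 \times [-Q,\,Q]^2,
$$
whose volume equals $2^4 \cdot 4d$. This produces a nonzero $(p,q) \in \mathcal{O}^2$ satisfying $|\sigma_i(p) - x_i \sigma_i(q)| \le 2\sqrt{d}/Q$ and $|\sigma_i(q)| \le Q$ for $i=1,2$. The case $q=0$ is ruled out by the norm inequality: if $q=0$ then $p \neq 0$ while $\mathcal{N}(p) = |\sigma_1(p)\sigma_2(p)| \le 4d/Q^2$, which is less than $1$ for $Q > 2\sqrt{d}$, contradicting $\mathcal{N}(p) \ge 1$ for $p\ne 0$. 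Since $|\sigma_1(q)| + |\sigma_2(q)| \le 2Q$, dividing the bound by $|\sigma_i(q)|$ gives
$$
\left|x_i - \frac{\sigma_i(p)}{\sigma_i(q)}\right| \le \frac{4\sqrt{d}}{|\sigma_i(q)|\bigl(|\sigma_1(q)|+|\sigma_2(q)|\bigr)} \qquad (i=1,2),
$$
which is the desired inequality with, say, $C = 4\sqrt{d}$.

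To obtain infinitely many distinct pairs, I would argue by contradiction. Suppose only finitely many pairs arise as $Q$ ranges over $(2\sqrt{d},\infty)$; denote this finite set by $\mathcal{F}$. Then
$$
\mu := \min_{(p,q)\in\mathcal{F}}\ \max_{i=1,2}\bigl|\sigma_i(p) - x_i\sigma_i(q)\bigr|
$$
is strictly positive, since $\mu=0$ would force $x_i = \sigma_i(p/q)$ for both $i$, placing $(x_1,x_2) = \sigma(p/q) \in \sigma(\mathbb{K})$ and contradicting the hypothesis. Choosing $Q$ so large that $2\sqrt{d}/Q < \mu$, Minkowski produces a pair with $\max_i|\sigma_i(p)-x_i\sigma_i(q)| < \mu$, hence outside $\mathcal{F}$, a contradiction. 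I do not foresee any substantive obstacle in this argument; it is essentially the Minkowski-theoretic proof of the classical Dirichlet theorem, with the single delicate point being the covolume computation, which is precisely what allows the constant $C$ to depend only on $d$ and not on the pair $(x_1,x_2)$.
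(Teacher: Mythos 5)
Your proof is correct, but it takes a different route from the paper: the paper does not prove this theorem at all, instead deducing it directly from Qu\^eme's general Dirichlet-type approximation theorem for arbitrary number fields \cite[Theorem 1]{Que}. Your argument, by contrast, is self-contained and elementary: you build the explicit rank-$4$ lattice $\Lambda=\Phi(\mathcal{O}^2)\subset\mathbb{R}^4$ whose determinant $\pm 4d$ is independent of $(x_1,x_2)$, apply Minkowski's convex body theorem (in its compact form, since you have equality $\mathrm{vol}\,K(Q)=2^4\cdot 4d$), rule out $q=0$ via $\mathcal{N}(p)\ge 1$ for $Q>2\sqrt{d}$, and convert the box inequalities into the stated bound with $C=4\sqrt{d}$ using $|\sigma_1(q)|+|\sigma_2(q)|\le 2Q$; the infinitude then follows from the standard minimal-gap contradiction, using $(x_1,x_2)\notin\sigma(\mathbb{K})$ precisely at the step $\mu>0$. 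This buys explicitness and a concrete constant specific to $\mathbb{Q}(\sqrt{d})$, whereas the paper's citation buys generality (all number fields, all signatures) at the cost of being a black box. Two cosmetic remarks: the matrix you write is block \emph{upper} (not lower) triangular in the basis order you chose, though the determinant computation is unaffected; and it is worth saying explicitly that it is the \emph{covolume} of $\Lambda$, not $\Lambda$ itself, that is independent of $(x_1,x_2)$ (the lattice certainly depends on the pair). Neither affects the validity of the argument.
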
  

Since $|\sigma_i(q)| (|\sigma_1(q)|+|\sigma_2(q)|)\ge |\sigma_1(q)\sigma_2(q)|=\mathcal{N}(q)$ for $i=1,2$ and $\mathcal{N}(\tilde{q})\ge \mathcal{N}(q)$ whenever $\tilde{p}/\tilde{q}=p/q$ and
$p$ and $q$ are coprime in $\mathcal{O}$, the following is an immediate Corollary. 

\begin{Corollary} \label{diriapptheoremweak} There exists a constant $C>0$ with the following property. If $(x_1,x_2)\in \mathbb{R}^2\setminus \sigma(\mathbb{K})$, then there are infinitely many principal ideals 
$\mathfrak{q}\in I\setminus 0$ such that 
\begin{equation*}
\left|x_i-\frac{\sigma_i(p)}{\sigma_i(q)}\right|\le \frac{C}{\mathcal{N}(\mathfrak{q})} \quad \mbox{ for } i=1,2
\end{equation*}
for some generator $q$ of $\mathfrak{q}$ and $p\in \mathcal{O}$ coprime to $q$. 
\end{Corollary}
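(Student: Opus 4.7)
The plan is to deduce the Corollary directly from Theorem \ref{diriapptheoremstrong} by three short moves: a weakening of the Diophantine bound, a coprimality reduction enabled by class number one, and a counting argument to ensure that infinitely many distinct ideals arise.

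First, I would observe the elementary inequality $|\sigma_i(q)|(|\sigma_1(q)|+|\sigma_2(q)|)\ge|\sigma_1(q)\sigma_2(q)|=\mathcal{N}(q)$ for each $i\in\{1,2\}$, which immediately turns the bound of Theorem \ref{diriapptheoremstrong} into $|x_i-\sigma_i(p)/\sigma_i(q)|\le C/\mathcal{N}(q)$. I would then use the class number one assumption to choose $t\approx\gcd(p,q)$ in $\mathcal{O}$ and set $\tilde p:=p/t$, $\tilde q:=q/t$. The ratio $\sigma_i(p)/\sigma_i(q)$ is unchanged, while $\mathcal{N}(q)=\mathcal{N}(t)\mathcal{N}(\tilde q)\ge\mathcal{N}(\tilde q)$, so the bound passes to $|x_i-\sigma_i(\tilde p)/\sigma_i(\tilde q)|\le C/\mathcal{N}(\tilde q)$, which is precisely the statement of the Corollary with $\mathfrak{q}:=(\tilde q)$.

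The subtle point is extracting infinitely many distinct \emph{ideals}, rather than merely infinitely many pairs. I would handle this by two finiteness observations. First, any fixed coprime fraction $\tilde p/\tilde q\in\mathbb{K}$ can arise from only finitely many pairs $(p,q)=(t\tilde p,t\tilde q)$ consistent with Theorem \ref{diriapptheoremstrong}: the hypothesis $(x_1,x_2)\notin\sigma(\mathbb{K})$ makes $|x_i-\sigma_i(\tilde p)/\sigma_i(\tilde q)|$ a fixed positive quantity, and the Theorem's bound then forces both $|\sigma_1(t)|$ and $|\sigma_2(t)|$ to stay bounded, confining $t$ to finitely many points of the lattice $\sigma(\mathcal{O})\subset\mathbb R^2$. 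Second, for a fixed ideal $\mathfrak{q}=(q_0)$, every admissible coprime fraction takes the form $\alpha/q_0$ with $\alpha\in\mathcal{O}$ coprime to $q_0$, and the weak bound rewrites as $|\sigma_i(\alpha)-x_i\sigma_i(q_0)|\le C/|\sigma_{3-i}(q_0)|$, which confines $(\sigma_1(\alpha),\sigma_2(\alpha))$ to a box in $\mathbb R^2$ of area $\le 4C^2/\mathcal{N}(q_0)$, containing only finitely many points of the lattice $\sigma(\mathcal{O})$. Combining the two observations, if only finitely many ideals $\mathfrak{q}$ arose, then only finitely many pairs $(p,q)$ could satisfy Theorem \ref{diriapptheoremstrong}, a contradiction.

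The main obstacle is not depth but bookkeeping: because $\mathcal{O}$ has an infinite unit group, one has to disentangle pairs from fractions from ideals, and the counting argument hinges on the fact that $\sigma(\mathcal{O})$ sits as a genuine lattice in $\mathbb R^2$ (even though $\mathcal{O}$ is dense in $\mathbb R$ under either embedding individually), so that finiteness of lattice points in bounded regions may legitimately be invoked.
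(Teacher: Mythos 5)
Your proposal is correct and uses exactly the two observations the paper records as the proof of the Corollary: the elementary inequality $|\sigma_i(q)|(|\sigma_1(q)|+|\sigma_2(q)|)\ge\mathcal{N}(q)$ and the fact that passing to the coprime reduction can only decrease the norm of the denominator. Where you go further is in spelling out why infinitely many \emph{distinct} ideals arise, a step the paper labels ``immediate'' without elaboration; your two finiteness lemmas (each coprime fraction arises from only finitely many pairs consistent with the strong bound, since the fixed positive distance $|x_i-\sigma_i(\tilde p)/\sigma_i(\tilde q)|$ forces $|\sigma_1(q)|$ and $|\sigma_2(q)|$ to be bounded; and each fixed ideal admits only finitely many coprime numerators within the weak bound, since $(\sigma_1(\alpha),\sigma_2(\alpha))$ is confined to a bounded box) are both correct, both ultimately resting on $\sigma(\mathcal{O})$ being a lattice in $\mathbb{R}^2$. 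That extra care is warranted here, since the infinite unit group is precisely what makes ``infinitely many pairs'' a priori weaker than ``infinitely many ideals'' in the real quadratic setting.
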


Our main result on Diophantine approximation with prime restriction in $\mathbb{Q}(\sqrt{d})$ will depend on a certain Diophantine property of $(x_1,x_2)$ which we introduce 
next.

\subsection{Good and bad $(x_1,x_2)$} \label{GB}
By Corollary \ref{diriapptheoremweak}, there are infinitely many natural numbers $W$ such that 

\begin{equation} \label{theabove}
\left|x_i- \frac{\sigma_i(u+v\sqrt{d})}{\sigma_i(f+g\sqrt{d})}\right|\le \frac{1}{\mathcal{N}(f+g\sqrt{d})} \quad \mbox{ for }
i=1,2
\end{equation}
for suitable $u,v,f,g\in \mathbb{Z}$ with $u+v\sqrt{d}$ and $f+g\sqrt{d}$ coprime in $\mathcal{O}$ and $\mathcal{N}(f+g\sqrt{d})=W$. 
We shall require the following notion.

\begin{Definition} \label{goodbad} For $\eta>0$, we say that  $(x_1,x_2)\in \mathbb{R}^2\setminus \sigma(\mathbb{K})$ is $\eta$-good if there is an infinite sequence of natural numbers $W$ such that \eqref{theabove} holds 
with  
$$
{\mathcal{N}(f+g\sqrt{d})}=W, \quad \mbox{\rm gcd}(u+v\sqrt{d},f+g\sqrt{d})\approx 1 \quad \mbox{and} \quad 
{\rm gcd}(f,g)=O\left(W^{\eta}\right),
$$
where {\rm gcd}$(f,g)$ is meant to be the largest natural number dividing both $f$ and $g$. 
We call $(x_1,x_2)$ good if it is $\eta$-good for all $\eta>0$. We call $(x_1,x_2)$ bad if it is not good.
\end{Definition}

We shall obtain the full analog of Vaughan's classical 1/4-result for {\it good} pairs. This constraint is somewhat unsatisfactory, but we shall show in subsection \ref{almostall} that, in the sense of the Lebesgue measure, almost all $(x_1,x_2)\in \mathbb{R}^2\setminus \sigma(\mathbb{K})$ are good. In subsection \ref{consgood}, we shall supply an explicit construction of particular good pairs $(x_1,x_2)$. At this stage we are, however, not able to decide whether there are bad pairs $(x_1,x_2)$ {\it at all} or if they are just an artifact. It is well possible that the set of bad pairs is actually empty. We pose this as an open problem, left to future research. \\ \\
{\bf Problem:} Decide whether there are bad pairs $(x_1,x_2)$ or not. If there are, describe their properties and find a different method which allows to treat them effectively, with the goal of obtaining the full analog of Vaughan's classical result for {\it all} $(x_1,x_2) \in \mathbb{R}^2\setminus \sigma(\mathbb{K})$. \\ \\
The point where the relevant parameter $Z={\rm gcd}(f,g)$ comes into play is in subsection \ref{additive}. Here a linear congruence \eqref{singlecon'} to a modulus of the form $W'Z$ appears, whereas another important relation, the quadratic congruence \eqref{congrurel'}, does not have an extra factor of $Z$ in the modulus. We are not able to handle the factor $Z$ in the modulus $W'Z$ of the congruence \eqref{singlecon'} and have no choice but to throw it away, which causes a loss. The resulting congruence to modulus $W'$, however, can be successfully treated in combination with the said quadratic congruence \eqref{congrurel'} to the same modulus $W'$.    

\subsection{Main result}
The following is our main result. 

\begin{Theorem} \label{mainresult}
Assume that $\mathbb{Q}(\sqrt{d})$ has class number 1, where $d$ is a positive square-free integer satisfying $d\equiv 3 \bmod{4}$. Let $\varepsilon$ be any positive real number. Suppose further that $(x_1,x_2)\in \mathbb{R}^2\setminus \sigma(\mathbb{K})$ is $\eta$-good
in the above sense. Set
$$
\nu:=\frac{1/8-\eta}{1+2\eta}.
$$
Then there exist infinitely many non-zero prime ideals $\mathfrak{p}$ in the ring $\mathcal{O}$ of integers of $\mathbb{Q}(\sqrt{d})$
such that 
\begin{equation*}
\left|x_i-\frac{\sigma_i(p)}{\sigma_i(q)}\right|\le \mathcal{N}(\mathfrak{p})^{-1/2-\nu+\varepsilon} \quad \mbox{ for } i=1,2
\end{equation*}
for some generator $q$ of $\mathfrak{p}$ and $p\in \mathcal{O}$. 
If $(x_1,x_2)$ is good, then the above holds with $\nu=1/8$. 
\end{Theorem}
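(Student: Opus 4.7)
The plan is to follow the strategy of Baier--Technau \cite{BaiTech} in the imaginary-quadratic case, adapted to the extra difficulties that $\mathcal{O}$ now has an infinite unit group and that the analytic problem is genuinely two-dimensional. I would construct a non-negative weight $w$ supported on prime ideals $\mathfrak{p}$ satisfying the desired approximation, built by combining a smooth Diophantine detector with a smoothed sum over all generators $q$ of the ideal, the smoothing being taken against a test function on a fundamental domain for the unit action on $\sigma(\mathcal{O})\subset \mathbb{R}^2$. This absorbs the unit group at the cost of making $w(\mathfrak{q})$ an explicit smooth sum over $\mathcal{O}$. The goal is then to show that $\sum_{\mathfrak{p}\,\mathrm{prime},\,\mathcal{N}(\mathfrak{p})\asymp N} w(\mathfrak{p})>0$ for infinitely many $N$, using the ideal version of Harman's sieve as formulated in \cite{BaiTech}. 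This reduces matters to an asymptotic for the corresponding sum over all ideals together with power-saving upper bounds for Type I sums $\sum_{\mathfrak{a}\mathfrak{b}}\alpha(\mathfrak{a})w(\mathfrak{a}\mathfrak{b})$ with $\mathcal{N}(\mathfrak{a})\le N^{1/2+\nu}$ and Type II sums $\sum_{\mathfrak{a}\mathfrak{b}}\alpha(\mathfrak{a})\beta(\mathfrak{b})w(\mathfrak{a}\mathfrak{b})$ with $N^{\nu}\le \mathcal{N}(\mathfrak{a})\le N^{1/2}$.

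For the Type II sum (Type I is simpler and requires only Poisson applied in a single smooth variable), I would apply Cauchy--Schwarz to decouple $\alpha$ and $\beta$, unfold all ideal sums to sums over $\mathcal{O}$, and then apply Poisson summation in both real embeddings of the auxiliary variable $p$ that appears in the numerator of the approximating fraction. By self-duality of $\mathcal{O}$ with respect to the trace pairing, the dual sum runs over $\mathfrak{m}\in\mathcal{O}$; the rapid decay of the Fourier transform of the detector restricts $\mathfrak{m}$ to $|\sigma_i(\mathfrak{m})|\ll \delta^{-1}$ with $\delta\asymp N^{-\nu+\varepsilon}$, and the phases take the form $e\bigl(\sigma_1(\mathfrak{m})x_1\sigma_1(\tilde q)+\sigma_2(\mathfrak{m})x_2\sigma_2(\tilde q)\bigr)$ for a suitable auxiliary $\tilde q$. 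Substituting the $\eta$-good approximation $(x_1,x_2)\approx\bigl(\sigma_1((u+v\sqrt d)/(f+g\sqrt d)),\,\sigma_2((u+v\sqrt d)/(f+g\sqrt d))\bigr)$ with $W=\mathcal{N}(f+g\sqrt d)$ linearises the phase modulo a controllably small error and turns it into a rational phase whose denominator divides $WZ$, where $Z=\gcd(f,g)$.

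Completing the remaining summation over residue classes produces two arithmetic constraints on $\mathfrak{m}$: a linear congruence of shape $\sigma_1(u+v\sqrt d)m_1+\sigma_2(u+v\sqrt d)m_2\equiv 0\pmod{WZ}$, and, arising from the identity $\sigma_1(q)\sigma_2(q)=\pm\mathcal{N}(q)$ which couples the two embeddings, a quadratic congruence modulo $W$. The extra $Z$ in the modulus of the linear relation cannot be exploited in tandem with the quadratic congruence at the smaller modulus $W$, so I would discard it at a cost of $Z\ll W^{\eta}$; this is where, and only where, the $\eta$-goodness is used. What remains is to count $\mathfrak{m}\in\mathcal{O}$ in a box of side $\delta^{-1}$ satisfying a linear and a quadratic congruence modulo $W$. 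For such counts one invokes Hooley's theorem on the distribution of roots of quadratic congruences \cite{Hoo}, whose proof proceeds through reduction theory for binary quadratic forms of discriminant proportional to $d$; this supplies the decisive square-root saving over the trivial bound.

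The last step is to feed the resulting Type I and Type II bounds back into the sieve and optimise. Balancing the polynomial saving from $\delta^{-1}\asymp N^{\nu}$ against the loss $W^{\eta}$, together with the constraint $W\ll \delta^{-1}$ implicit in the Diophantine approximation, gives exactly $\nu=(1/8-\eta)/(1+2\eta)$, which reaches the value $1/8$ — the two-dimensional analog of Vaughan's $1/4$ — for good $(x_1,x_2)$. The hardest part, I anticipate, will be the clean extraction of the quadratic congruence to the smaller modulus $W$: one must arrange the combinatorics of the smoothed ideal sums, the Poisson duality, and the identity $\sigma_1(q)\sigma_2(q)=\pm\mathcal{N}(q)$ so that Hooley's input becomes directly applicable and no spurious dependence on $Z$ contaminates the quadratic relation. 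The correct orchestration of these ingredients, while keeping smoothings, unit quotients and Fourier cut-offs in their proper places, is the real technical heart of the argument.
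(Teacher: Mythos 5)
Your proposal follows the paper's approach essentially step for step: smoothed weights built from sums over generators of the ideal, the ideal version of Harman's asymptotic sieve with a Type I/Type II decomposition, Cauchy--Schwarz followed by two-dimensional Poisson summation in the Type II estimate, Diophantine approximation of $(x_1,x_2)$ by a conjugate pair from $\mathbb{Q}(\sqrt{d})$ (rather than a two-dimensional rational approximation), reduction to a linear and a quadratic congruence, and Hooley's theorem on roots of quadratic congruences to furnish the square-root-of-modulus Diophantine approximation of $a'\overline{b'}/W'$ that makes the counting problem tractable. You correctly identify the role of $Z=\gcd(f,g)$ and where (and only where) $\eta$-goodness enters, namely in discarding the extra $Z$ in the modulus of the linear congruence, and you arrive at the same exponent $\nu=(1/8-\eta)/(1+2\eta)$.

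Two small slips are worth flagging. First, you write the linear congruence as being modulo $WZ$ and the quadratic as modulo $W$; in the paper's reduction one sets $W'=W/Z^2$, and the linear congruence is modulo $W'Z=W/Z$ while the quadratic is modulo $W'=W/Z^2$ — so the pair of moduli still differ by a factor $Z$, which is the only thing that matters, but the absolute sizes you give are off by $Z^2$. Second, the constraint ``$W\ll\delta^{-1}$ implicit in the Diophantine approximation'' is reversed: one needs the approximation error $C/W$ to fall below the detection width $\delta/\sqrt{N}$, so $W$ must be \emph{large}, and the paper's optimization in fact takes $W\asymp x^{1/2}\delta^{-1}$. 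Neither slip affects the final exponent, which you state correctly, but a verbatim execution of your optimization with $W\ll\delta^{-1}$ would not reproduce it.
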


In particular, for a concrete subset of $\mathbb{R}^2$ of full Lebesgue measure, the good $(x_1,x_2)$, we have the real quadratic analog to Theorem \ref{Bai-Tech}.
We note that Theorem \ref{mainresult} gives a nontrivial estimate except when $(x_1,x_2)$ is not $\eta$-good for any $\eta<1/8$. The set of these $(x_1,x_2)$ has Lebesgue measure 0 since it is a subset of the set of 
bad $(x_1,x_2)$. 

Below are some comments on our conditions ``$d\equiv 3 \bmod{4}$'' and ``class number 1'' in Theorem \ref{mainresult}. First, we point out that in contrast to the imaginary-quadratic case where one has only finitely many fields of class number 1 by the celebrated Baker-Heegner-Stark Theorem, it is conjectured that there exist infinitely many real quadratic number fields $\mathbb{Q}(\sqrt{d})$ of class number 1, and this should remain true when $d$ is restricted to integers congruent to 3 modulo 4. 

In fact, the restriction to $d\equiv 3 \bmod{4}$ is non-essential and just made for convenience because under this condition we can write all elements of $\mathcal{O}$ in the form $a+b\sqrt{d}$ with $a,b$ integers, and we have that $d$ is odd, which will turn out convenient in subsection \ref{redu} but is not  needed anywhere else. It should not cause 
much trouble to establish Theorem \ref{mainresult} also for the cases when $d\equiv 1,2\bmod{4}$ along the same lines. 

The condition of $\mathbb{Q}(\sqrt{d})$ having class number 1 makes the proof convenient but it should not be hard to remove this assumption as well. Essentially, what needs to be changed to settle the case when the class number is greater than 1 is to restrict the prime ideals in the main results to principal prime ideals and to assume that the sieve weights $\omega(\mathfrak{q})$ and $\tilde{\omega}(\mathfrak{q})$, defined in section 2, are supported on principal ideals. The sieve itself remains the same (formulated for general weight functions on {\it all} integral ideals). The important quantity $\mathcal{T}(N)$, defined in equation \eqref{2.5}, now measures the number of principal prime ideals with norm of size about $N$, scaled by a factor of $\delta^2/(2\sqrt{d})$. This quantity has to satisfy the same lower bound $\mathcal{T}(N)\gg \delta^2 N/\log N$ as in the class number 1 setting, which is indeed the case since a positive proportion of prime ideals is principal. (To see this, single out the principal prime ideals from the prime ideals using class group characters and use the Hecke $L$-functions associated to them.)

Moreover, it would also be desirable to improve the exponent $\nu$ in our main result. To this end, one would need to replace the asymptotic Harman sieve for quadratic number fields by a lower bound sieve similar to that used by Harman in the cases of $\mathbb{Q}$ and $\mathbb{Q}(i)$. To work out such a lower bound sieve for number fields seems feasible. However, looking at its proof in the classical setting of $\mathbb{Q}$, it depends on asymptotic estimates for averages of the weight function over primes (or more generally, multiples of primes). Considering the treatment in \cite{BaiTech}, asymptotics of this kind seem to be available in the imaginary-quadratic case. The weight functions used in this article on the real quadratic case are more complicated, though, and it is not immediately clear if they are suitable to obtain the required asymptotics. To settle these technical issues presents another goal of future research. \\ \\ 
{\bf Acknowledgements.} The authors would like to thank the referee for his detailed comments and detecting some inaccuracies in the original version of this article. Further, we would like to thank the Ramakrishna Mission Vivekananda Educational and Research Institute for providing an excellent work environment. The second-named author's research has been supported by a UGC Net fellowship.

\section{Smoothed setup}
We begin with smoothing our Diophantine problem. Throughout the sequel, we let $\mathcal{C}$ be a natural number which will be fixed in the course of this article 
depending on $\varepsilon$ and no other parameter. We shall make
use of the non-negative function
\begin{equation} \label{fdef}
f(x)=\left(\exp(-\pi x^2)-\exp(-2 \pi x^2)\right)^{\mathcal{C}},
\end{equation}
which has, on the one hand, exponential decay as $|x|\rightarrow \infty$, and satisfies, on the other hand, the bound
\begin{equation} \label{useful}
f(x) \ll_{\mathcal{C}} \min\left\{1,|x|^{2\mathcal{C}}\right\}
\end{equation}
for all real $x$, which is strong if $x$ is small. We also write
\begin{align} \label{Omegadef}
    \Omega_{\Delta}(x):=\exp\left(-\pi \cdot \frac{x^2}{\Delta^2}\right).
\end{align}

We define two functions $\omega,\tilde{\omega}:\mathcal{I}\longrightarrow \mathbb{R}_{\ge 0}$ as 
\begin{align} \label{2.1}
    \omega(\mathfrak{q}):=\frac{\delta^2}{2\sqrt{d}}\cdot \Psi(\mathfrak{q}) 
\end{align}
and 
\begin{align} \label{2.2}
    \tilde{\omega}(\mathfrak{q}):=\tilde{\Psi}(\mathfrak{q})\cdot  F(\mathfrak{q}) 
\end{align}
with
\begin{equation} \label{Psidef}
\begin{split}
\Psi(\mathfrak{q}):=& \sum\limits_{\substack{k\in \mathcal{O}\\ (k)=\mathfrak{q}}} f\left(\frac{\sigma_1(k)}{\sqrt{N}}\right)f\left(\frac{\sigma_2(k)}{\sqrt{N}}\right), \quad 
\tilde{\Psi}(\mathfrak{q}):=\frac{N}{\mathcal{N}(\mathfrak{q})}\cdot \Psi(\mathfrak{q})
\end{split}
\end{equation}
and
\begin{equation} \label{Fdef}
F(\mathfrak{q}):=\sum\limits_{p\in\mathcal{O}}\Omega_{\delta/\sqrt{N}}\left(x_1-\frac{\sigma_1(p)}{\sigma_1(q)}\right)\Omega_{\delta/\sqrt{N}}\left(x_2-\frac{\sigma_2(p)}{\sigma_2(q)}\right),
\end{equation}
where $q$ in \eqref{Fdef} is any generator of $\mathfrak{q}$, i.e. 
\begin{align*}
  \mathfrak{q}=(q).  
\end{align*}
Below we will see that $\omega(\mathfrak{q})$ and $\tilde{\omega}(\mathfrak{q})$
are well-defined. 

It is easy to see that the sum over $p$ in \eqref{Fdef} converges. This is because  
\begin{equation} \label{lattice}
\Lambda(q;x_1,x_2)=\left\{\left(x_1-\frac{\sigma_1(p)}{\sigma_1(q)}, x_2-\frac{\sigma_2(p)}{\sigma_2(q)}\right) \in \mathbb{R}^2: p\in \mathcal{O}\right\}
\end{equation}
is a shifted lattice in $\mathbb{R}^2$, and $\Omega_{\Delta}(x)$ 
is exponentially decreasing for $|x|\rightarrow \infty$. Moreover, we have the upper bound 
\begin{equation} \label{Fsize}
F(\mathfrak{q})\ll 1+\frac{\delta^2}{N}\cdot \mathcal{N}(\mathfrak{q}),
\end{equation}
which will be provided in section \ref{Psiev}.

The convergence of the sum over $q$ in \eqref{Psidef} is easy to see as well, and moreover we have the upper bounds
\begin{equation} \label{Psisize}
 \Psi(\mathfrak{q}),\tilde{\Psi}(\mathfrak{q})\ll 
\exp\left(-\pi \mathcal{D}\mathcal{C}\cdot \frac{\mathcal{N}(\mathfrak{q})}{N}\right)\cdot \log N
\end{equation}
for every ideal $\mathfrak{q}\in \mathcal{I}\setminus\{0\}$ and a suitable constant $\mathcal{D}>0$, which will be proved in section \ref{Psiev}, where we shall also provide the lower bounds
\begin{equation} \label{Psisizeobs}
\Psi(\mathfrak{q}),\tilde{\Psi}(\mathfrak{q})\gg 1 \quad \mbox{ if } N\le \mathcal{N}(\mathfrak{q})\le 2N. 
\end{equation}

We still need to show that $F(\mathfrak{q})$ is independent of the choice of the generator $q$, i.e., the sum on the right-hand side of \eqref{Fdef} is invariant under a change of variables $q \rightarrow uq$, where $u$ is a unit in $\mathcal{O}$.
Putting $uq$ in place of $q$, we get
\begin{gather*}
    \sum\limits_{p\in\mathcal{O}}\Omega_{\delta/\sqrt{N}}\left(x_1-\frac{\sigma_1(p)}{\sigma_1(qu)}\right)\Omega_{\delta/\sqrt{N}}\left(x_2-\frac{\sigma_2(p)}{\sigma_2(qu)}\right)\\
     =\sum\limits_{p\in\mathcal{O}}\Omega_{\delta/\sqrt{N}}\left(x_1-\frac{\sigma_1(pu^{-1})}{\sigma_1(q)}\right)\Omega_{\delta/\sqrt{N}}\left(x_2-\frac{\sigma_2(pu^{-1})}{\sigma_2(q)}\right),
\end{gather*}
which equals $F(\mathfrak{q})$ upon making the change of variables $pu^{-1} \rightarrow p$.

Thus, we have seen that $\Psi(\mathfrak{q})$, $\tilde{\Psi}(\mathfrak{q})$ and $F(\mathfrak{q})$ are well-defined, and the definitions of $\omega(\mathfrak{q})$ and $\tilde{\omega}(\mathfrak{q})$ in \eqref{2.1} and \eqref{2.2} together with the bounds 
\eqref{Fsize} and \eqref{Psisize} and $\delta\le 1$ give
\begin{equation} \label{omegabounds}
\omega(\mathfrak{q})\ll \delta^2\exp\left(-\pi \mathcal{D}\mathcal{C}\cdot \frac{\mathcal{N}(\mathfrak{q})}{N}\right)\cdot \log N \quad \mbox{ and }\quad \tilde{\omega}(\mathfrak{q})\ll \exp\left(-\frac{\pi}{2}\cdot  \mathcal{D}\mathcal{C}\cdot \frac{\mathcal{N}(\mathfrak{q})}{N}\right)\cdot \log N.
\end{equation}

Our goal is to derive a lower bound for the quantity
\begin{align} \label{2.4}
    \tilde{\mathcal{T}}(N):=\sum\limits_{\mathfrak{p}}\tilde{\omega}(\mathfrak{p}),
\end{align}
where the sum on the right-hand side runs over all prime ideals $\mathfrak{p}\in \mathcal{I}\setminus\{0\}$. The convergence of this sum is again ensured due to the bound for $\tilde{\omega}(\mathfrak{q})$ in \eqref{omegabounds}.
The above quantity $\tilde{\mathcal{T}}(N)$
measures, up to units, the number of prime elements $q \in \mathcal{O}$ with norm of size about $N$ such that  $|x_i-\sigma_i(p)/\sigma_i(q)|$ is not much larger than $\delta/\sqrt{N}$ for $i=1,2$ and a suitable $p\in \mathcal{O}$.
Our approach is, following Harman's philosophy, to compare $\tilde{\mathcal{T}}(N)$ with the quantity
\begin{align} \label{2.5}
    \mathcal{T}(N):=\sum\limits_{\mathfrak{p}}\omega(\mathfrak{p}),
\end{align}
where the sum on the right-hand side again runs over all prime ideals $\mathfrak{p}\in \mathcal{I}\setminus\{0\}$. The quantity $\mathcal{T}(N)$ 
measures, up to units, the number of prime elements $q\in \mathcal{O}$ with norm of size about $N$, scaled by a factor of 
$\delta^2/(2\sqrt{d})$. Using Landau's prime ideal theorem together with  \eqref{2.1} and \eqref{Psisizeobs}, we obtain
\begin{equation*} 
\mathcal{T}(N)\gg \delta^2\cdot \frac{N}{\log N}.
\end{equation*}
The main task of this article is to show that, under suitable conditions on $\delta$, the difference
$$
\tilde{\mathcal{T}}(N)-\mathcal{T}(N)
$$
is small compared to $\delta^2N/\log N$ so that the above lower bound for $\mathcal{T}(N)$ yields one for $\tilde{\mathcal{T}}(N)$. 
We shall establish the following.

\begin{Theorem}\label{difftheorem}
Assume that $(x_1,x_2)\in \mathbb{R}^2\setminus \sigma(\mathbb{K})$ is $\eta$-good, where $\eta>0$. Suppose that $\varepsilon\le 1/14$. Then there exist infinitely many natural numbers $N$ such that
$$
\tilde{\mathcal{T}}(N)-\mathcal{T}(N)\ll \delta^2N^{1-\varepsilon}
$$
and hence
$$
\tilde{\mathcal{T}}(N)\gg \delta^2\cdot \frac{N}{\log N},
$$
provided that 
$$ 
1\ge \delta\ge N^{-\nu+15\varepsilon},
$$
where
$$
\nu:=\frac{1/8-\eta}{1+2\eta}.
$$
\end{Theorem}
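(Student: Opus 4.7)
The plan is to invoke a smoothed asymptotic version of Harman's sieve for integral ideals in $\mathcal{O}$, comparing the weights $\tilde\omega$ and $\omega$ on prime ideals. This reduces the discrepancy $\tilde{\mathcal{T}}(N)-\mathcal{T}(N)$ to a combination of type I sums of the shape $\sum_{\mathfrak{a}} c_\mathfrak{a}\sum_{\mathfrak{a}\mid\mathfrak{n}}(\tilde\omega-\omega)(\mathfrak{n})$ and type II sums of the shape $\sum_{\mathfrak{a},\mathfrak{b}} a_\mathfrak{a} b_\mathfrak{b}(\tilde\omega-\omega)(\mathfrak{a}\mathfrak{b})$ over parameter ranges dictated by the sieve. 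For each such discrepancy sum, the target is the power-saving $\delta^2 N^{1-\varepsilon}$.

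To bound these sums I would unfold $\tilde\omega(\mathfrak{q})=\tilde\Psi(\mathfrak{q})F(\mathfrak{q})$ by applying Poisson summation to the $p$-sum in $F(\mathfrak{q})$. The set $\{(\sigma_1(p)/\sigma_1(q),\sigma_2(p)/\sigma_2(q)):p\in\mathcal{O}\}$ is a shifted rank-two lattice in $\mathbb{R}^2$ of covolume $2\sqrt{d}/\mathcal{N}(\mathfrak{q})$, so Poisson converts $F(\mathfrak{q})$ into a dual-lattice sum indexed by $\mathbf{k}\in\mathcal{O}$ in which the $\mathbf{k}=0$ term contributes exactly the mass normalised into $\omega(\mathfrak{q})$ via the factor $\delta^2/(2\sqrt{d})$. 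The zero frequency therefore cancels, and $\tilde\omega(\mathfrak{q})-\omega(\mathfrak{q})$ collapses to a sum over $\mathbf{k}\neq 0$ of Gaussian-weighted exponentials carrying a phase depending on $(x_1,x_2)$ and on the conjugates of $q$. The Gaussian decay of $\widehat{\Omega}_{\delta/\sqrt{N}}$ truncates this sum effectively at $|\mathbf{k}|\asymp \delta^{-1}$, and the smoothness of $f$ in \eqref{fdef} provides analogous rapid decay once we open $\Psi(\mathfrak{q})$ to a genuine sum over $\sigma_1(q),\sigma_2(q)$.

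Returning to the type I/II sums and swapping orders of summation, the inner sum over $\mathfrak{q}$ is a smoothed bilinear exponential sum. Here the $\eta$-good hypothesis of Definition \ref{goodbad} enters: for a $W=\mathcal{N}(f+g\sqrt{d})$ satisfying \eqref{theabove} with $\gcd(f,g)\ll W^\eta$, to be chosen optimally in terms of $N$ and $\delta$, I replace $x_i$ by $\sigma_i(u+v\sqrt{d})/\sigma_i(f+g\sqrt{d})$ plus a controlled residual, so that each phase splits into a rational part determining a congruence class and a small error that can be summed by partial summation. Cauchy--Schwarz on the $\mathbf{k}$-sum followed by opening the square on the $\mathfrak{q}$-sum then reduces the matter to counting pairs $(\mathfrak{q}_1,\mathfrak{q}_2)$ of ideals of norm $\asymp N$ that satisfy simultaneously a linear congruence to modulus $W'Z$ (with $Z=\gcd(f,g)$) and a quadratic congruence to modulus $W'$. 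The $Z$-factor in the linear modulus has to be discarded, costing $Z\ll W^\eta$, which is exactly what produces the $\eta$-deficit in $\nu=(1/8-\eta)/(1+2\eta)$.

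The quadratic congruence, after diagonalisation, takes the shape $X^2\equiv D\pmod{W'}$, and its solution count is controlled by Hooley's uniform-distribution results for roots of quadratic congruences, which rest on the theory of binary quadratic forms. Balancing $W$ against $N$ and $\delta$ and tracking the polynomial losses (the $Z$-deficit, the $|\mathbf{k}|\asymp \delta^{-1}$ truncation, and the choice of $\mathcal{C}$ large in terms of $\varepsilon$ so that the Gaussian tails are negligible) is expected to deliver the required savings across the whole range $\delta\ge N^{-\nu+15\varepsilon}$. The hard part, and the step that fixes the exponent $1/8$ as the real-quadratic analogue of Vaughan's $1/4$, is the coupled linear-plus-quadratic counting argument at the end of the previous paragraph: one must squeeze a genuine square-root-type saving out of the two congruences simultaneously while losing only $W^\eta$ to the modulus reduction. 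This is also the step that forces the $\eta$-good hypothesis into the statement of the theorem.
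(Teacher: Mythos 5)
Your proposal matches the paper's actual argument in all its essential features: Harman's asymptotic sieve for ideals reducing to type~I/II sums, Poisson summation on the $p$-sum inside $F(\mathfrak{q})$ to expose $\tilde\omega-\omega$ as a sum over nonzero dual frequencies with Gaussian truncation near $\delta^{-1}$, Cauchy--Schwarz followed by a second Poisson summation, Dirichlet approximation of $(x_1,x_2)$ inside $\mathbb{Q}(\sqrt{d})$ with the parameter $Z=\gcd(f,g)\ll W^{\eta}$ thrown away from one modulus, and Hooley's binary-quadratic-form approach to roots of quadratic congruences supplying the decisive saving, with a final balance of $W$ against $N$ and $\delta$. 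This is essentially the same route the paper takes.

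One imprecision is worth flagging, because taken literally it would lead the argument astray. You describe the terminal step as counting pairs of ideals that \emph{satisfy} simultaneously a linear congruence mod $W'Z$ and a quadratic congruence mod $W'$, with the latter's ``solution count'' controlled by Hooley. In the paper the quadratic congruence $a^{2}\equiv b^{2}d\ (\mathrm{mod}\ W)$, equivalently $(a'\overline{b'})^{2}\equiv d\ (\mathrm{mod}\ W')$, is not a constraint imposed on the lattice points being counted; it is an automatic identity satisfied by the approximation data $(a,b,W)$ because $a+b\sqrt{d}=(u+v\sqrt{d})(f-g\sqrt{d})$ and $W=\mathcal{N}(f+g\sqrt{d})$. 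Its role is purely structural: it certifies that $a'\overline{b'}$ is a root of $x^{2}\equiv d\ (\mathrm{mod}\ W')$, so that Hooley's Lemma \ref{uslemma} yields a Diophantine approximation $|a'\overline{b'}/W'-u/r|\ll r^{-2}$ with $r\asymp\sqrt{W'}$, and it is this approximation (fed into Lemma \ref{standardtool}) that produces the square-root-type saving when counting solutions of the single linear congruence $b'B\equiv a'A\ (\mathrm{mod}\ W')$. Phrased your way, one would look for a uniform-distribution bound for the set of roots of a quadratic congruence intersected with a box, which is not what is needed and would not interact correctly with the linear congruence. With this reinterpretation your plan agrees with the paper.
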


If $(x_1,x_2)$ is good, we can choose $\eta$ as small as we wish and therefore
get the following as an immediate Corollary. 

\begin{Corollary}\label{difftheorem2}
Assume that $(x_1,x_2) \in \mathbb{R}^2\setminus \sigma(\mathbb{K})$ is good. Suppose that $\varepsilon\le 1/14$. Then there exist infinitely many natural numbers $N$ such that
$$
\tilde{\mathcal{T}}(N)-\mathcal{T}(N)\ll \delta^2N^{1-\varepsilon}
$$
and hence
$$
\tilde{\mathcal{T}}(N)\gg \delta^2\cdot \frac{N}{\log N},
$$
provided that 
$$ 
1\ge \delta\ge N^{-1/8+16\varepsilon}.
$$
\end{Corollary}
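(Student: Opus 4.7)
The plan is very short: Corollary \ref{difftheorem2} is a direct specialization of Theorem \ref{difftheorem}. Since $(x_1,x_2)$ is good, by Definition \ref{goodbad} it is $\eta$-good for every $\eta>0$, and I am therefore free to choose $\eta$ as a function of $\varepsilon$ before invoking the theorem. The idea is to pick $\eta$ small enough in terms of $\varepsilon$ so that the exponent $\nu(\eta)=(1/8-\eta)/(1+2\eta)$ produced by Theorem \ref{difftheorem} satisfies $\nu(\eta)\ge 1/8-\varepsilon$, which is precisely what converts the hypothesis $\delta\ge N^{-\nu+15\varepsilon}$ of the theorem into the hypothesis $\delta\ge N^{-1/8+16\varepsilon}$ of the corollary.

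The one computation to carry out is the algebraic check that a sufficiently small $\eta>0$ yields $\nu(\eta)\ge 1/8-\varepsilon$. Unwrapping this,
$$
\frac{1/8-\eta}{1+2\eta}\ge \frac{1}{8}-\varepsilon \quad\Longleftrightarrow\quad \eta\left(\frac{5}{4}-2\varepsilon\right)\le \varepsilon,
$$
and under the standing assumption $\varepsilon\le 1/14$ this is comfortably satisfied by, say, $\eta=\varepsilon/2$. With this choice in hand, the $\eta$-good hypothesis of Theorem \ref{difftheorem} is available, and the corresponding $\nu$ satisfies $-\nu+15\varepsilon\le -1/8+16\varepsilon$, so any $\delta$ admissible for the corollary is automatically admissible for the theorem.

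Applying Theorem \ref{difftheorem} with this fixed $\eta=\varepsilon/2$ then immediately gives both $\tilde{\mathcal{T}}(N)-\mathcal{T}(N)\ll \delta^2N^{1-\varepsilon}$ and $\tilde{\mathcal{T}}(N)\gg \delta^2 N/\log N$ for infinitely many $N$ in the stated range of $\delta$, which is exactly the conclusion of the corollary. There is no real obstacle; the only cosmetic point is that the implicit constants furnished by Theorem \ref{difftheorem} depend on $\eta$, but since $\eta$ is fixed in terms of $\varepsilon$ from the outset and all $O$-constants are already permitted to depend on $\varepsilon$, $d$ and $(x_1,x_2)$, this dependence is harmless. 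Thus the proof reduces to a single substitution into Theorem \ref{difftheorem}, the ``hard part'' being nothing more than the elementary inequality verification above.
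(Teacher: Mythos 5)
Your proposal is correct and follows exactly the route the paper intends: the paper derives the Corollary from Theorem~\ref{difftheorem} by simply remarking that a good pair is $\eta$-good for all $\eta>0$, so $\eta$ may be taken as small as one wishes. Your explicit choice $\eta=\varepsilon/2$, together with the algebraic check that $\nu(\eta)\ge 1/8-\varepsilon$ and hence $-\nu+15\varepsilon\le -1/8+16\varepsilon$, fills in the arithmetic that the paper leaves implicit; the remark about the $O$-constants now depending on $\eta=\varepsilon/2$ (hence only on $\varepsilon$, $d$, $(x_1,x_2)$) is the right thing to say and is consistent with the paper's conventions.
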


In section \ref{unsmoothing}, we shall derive our main result, Theorem \ref{mainresult} from Theorem \ref{difftheorem} and Corollary \ref{difftheorem2}.

\section{Bounds for $\Psi(\mathfrak{q})$, $\tilde{\Psi}(\mathfrak{q})$ and $F(\mathfrak{q})$} \label{Psiev}
In this section, we shall show that the sum on the right-hand side of \eqref{Psidef} converges and that $\Psi(\mathfrak{q})$ and $\tilde{\Psi}(\mathfrak{q})$ satisfy \eqref{Psisize}
and \eqref{Psisizeobs}. We shall also derive the bound \eqref{Fsize} for 
$F(\mathfrak{q})$. To this end, we shall use the following lemma which will also be needed in later parts of this paper. 

\begin{Lemma} \label{sigmasize}
There exist constants $c_1,c_2>0$ depending only on $\mathbb{K}$ with the following property. For every $m_0\in\mathcal{O} \setminus\{0\}$, there exists a unit $u$ in $\mathcal{O}$ such that 
$$
c_1\sqrt{\mathcal{N}(m_0)}\le |\sigma_i(um_0)| \le c_2\sqrt{\mathcal{N}(m_0)} \mbox{ for } i=1,2.
$$
\end{Lemma}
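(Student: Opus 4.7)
The plan is to invoke Dirichlet's unit theorem for the real quadratic field $\mathbb{K}=\mathbb{Q}(\sqrt{d})$, which gives a fundamental unit $\varepsilon_0>1$ in $\mathcal{O}$ such that every unit in $\mathcal{O}$ has the form $\pm \varepsilon_0^k$ with $k\in \mathbb{Z}$. Since $\mathcal{N}(\varepsilon_0)=\pm 1$, one has $|\sigma_1(\varepsilon_0)|\cdot|\sigma_2(\varepsilon_0)|=1$, so writing $\lambda:=\log|\sigma_1(\varepsilon_0)|>0$ (after possibly replacing $\varepsilon_0$ by its inverse we may assume $|\sigma_1(\varepsilon_0)|>1$) we have $\log|\sigma_2(\varepsilon_0)|=-\lambda$. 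This is the only input from algebraic number theory that I need.

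Next I would pass to the logarithmic embedding. Set
\[
a:=\log|\sigma_1(m_0)|, \qquad b:=\log|\sigma_2(m_0)|,
\]
so that $a+b=\log \mathcal{N}(m_0)$. For any integer $k$, multiplication by $u=\varepsilon_0^k$ shifts these to $a+k\lambda$ and $b-k\lambda$ respectively, while leaving their sum invariant. Thus the goal is to choose $k\in \mathbb{Z}$ so that both shifted quantities lie close to the common value $\tfrac{1}{2}(a+b)=\tfrac{1}{2}\log \mathcal{N}(m_0)$. Concretely, I would pick $k$ to be the integer nearest to $(b-a)/(2\lambda)$, which gives
\[
\left|a+k\lambda-\tfrac{1}{2}\log\mathcal{N}(m_0)\right|=\left|\tfrac{1}{2}(a-b)+k\lambda\right|\le \tfrac{\lambda}{2},
\]
and the same bound holds for $|b-k\lambda-\tfrac{1}{2}\log\mathcal{N}(m_0)|$ by symmetry. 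Exponentiating, this yields
\[
e^{-\lambda/2}\sqrt{\mathcal{N}(m_0)}\le |\sigma_i(um_0)|\le e^{\lambda/2}\sqrt{\mathcal{N}(m_0)} \qquad (i=1,2),
\]
so the lemma follows with $c_1:=e^{-\lambda/2}=|\sigma_1(\varepsilon_0)|^{-1/2}$ and $c_2:=e^{\lambda/2}=|\sigma_1(\varepsilon_0)|^{1/2}$. These constants depend only on $\mathbb{K}$, as required.

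There is no real obstacle here: the lemma is essentially the statement that the logarithmic lattice of units is one-dimensional and cocompact inside the hyperplane of norm-preserving log-vectors, and the argument above is just a quantitative version of this fact for the rank-one case. The only small points to be careful about are (i) ensuring $\lambda>0$, which is automatic after possibly swapping $\varepsilon_0$ with $\varepsilon_0^{-1}$, and (ii) noting that the sign ambiguity $\pm 1$ in the unit group plays no role since the statement involves only absolute values of embeddings.
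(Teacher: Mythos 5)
Your proof is correct and follows essentially the same route as the paper: both choose an integer power $k$ of the fundamental unit so that multiplication by $\epsilon^k$ balances the two archimedean absolute values of $m_0$ near the geometric mean $\sqrt{\mathcal{N}(m_0)}$. The only cosmetic difference is that you round to the nearest integer (yielding constants $\epsilon^{\pm 1/2}$) while the paper takes a floor (yielding $\epsilon^{\pm 1}$), which does not change the substance of the argument.
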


\begin{proof} This follows from a more general result in Minkowski theory (see, for example, \cite[Lemma (6.2), section I.6, page 38]{Neu}). We give a short direct proof below.
 
Let $\epsilon$ be the fundamental unit, i.e. the smallest unit in $\mathcal{O}$ exceeding 1. Then for any $k\in \mathbb{Z}$, we have
$$
\sigma_1\left(\epsilon^km_0\right)=\epsilon^k\sigma_1(m_0)
$$
and 
$$
\sigma_2\left(\epsilon^km_0\right)=\epsilon^{-k}\sigma_2(m_0).
$$
If $\rho$ is a real number satisfying
$$
|\epsilon^{\rho}\sigma_1(m_0)|=|\epsilon^{-\rho}\sigma_2(m_0)|, 
$$
then since
$$
|\epsilon^{\rho}\sigma_1(m_0)|\cdot |\epsilon^{-\rho}\sigma_2(m_0)|=\mathcal{N}(m_0),
$$
it follows that 
$$
|\epsilon^{\rho}\sigma_1(m_0)|=\sqrt{\mathcal{N}(m_0)}=|\epsilon^{-\rho}\sigma_2(m_0)|.
$$
This real number $\rho$ is given by
$$
\rho=\frac{\log|\sigma_2(m_0)|-\log|\sigma_1(m_0)|}{2\log\epsilon}. 
$$
Take $k:=\lfloor \rho \rfloor$. Then it follows that
$$
\epsilon^{-1} \sqrt{\mathcal{N}(m_0)}\le |\sigma_i(\epsilon^km_0)|\le \epsilon \sqrt{\mathcal{N}(m_0)} \mbox{ for } i=1,2.
$$
Now the claim follows with $c_1=\epsilon^{-1}$, $c_2=\epsilon$ and $u=\epsilon^k$.
\end{proof}

Applying the above lemma and noting that all generators of $\mathfrak{q}$ are of the form $\pm m\epsilon^n$, where $m$ is any fixed generator, $\epsilon$ is the fundamental unit and $n$ runs over the integers, 
we may write $\Psi(\mathfrak{q})$ in the form
$$
\Psi(\mathfrak{q})= 2\sum\limits_{n\in \mathbb{Z}} f\left(\frac{\sigma_1\left(m\epsilon^n\right)}{\sqrt{N}}\right)f\left(\frac{\sigma_2\left(m\epsilon^n\right)}{\sqrt{N}}\right)= 
2\sum\limits_{n\in \mathbb{Z}} f\left(\frac{\sigma_1(m)\epsilon^{n}}{\sqrt{N}}\right)f\left(\frac{\sigma_2(m)\epsilon^{-n}}{\sqrt{N}}\right),
$$
where $(m)=\mathfrak{q}$ and $\sigma_{1,2}(m)\asymp \sqrt{\mathcal{N}(\mathfrak{q})}$. This gives immediately the lower bounds in \eqref{Psisizeobs} since 
$$
\Psi(\mathfrak{q}),\tilde{\Psi}(\mathfrak{q})\gg f\left(\frac{\sigma_1(m)}{\sqrt{N}}\right)f\left(\frac{\sigma_2(m)}{\sqrt{N}}\right)\gg 1 \quad \mbox{ if }
N\le \mathcal{N}(\mathfrak{q})\le 2N.
$$
The upper bound for $\Psi(\mathfrak{q})$ follows from
$$
\sum\limits_{n\in \mathbb{Z}} f\left(\frac{\sigma_1(m)\epsilon^{n}}{\sqrt{N}}\right)f\left(\frac{\sigma_2(m)\epsilon^{-n}}{\sqrt{N}}\right) \ll 
\sum\limits_{n=0}^{\infty} \exp\left(-\pi \mathcal{D}\mathcal{C}\cdot \frac{\mathcal{N}(\mathfrak{q})}{N}\cdot \epsilon^{2n}\right)\ll \exp\left(-\pi \mathcal{D}\mathcal{C}\cdot \frac{\mathcal{N}(\mathfrak{q})}{N}\right)\cdot \log N
$$
and the upper bound for $\tilde{\Psi}(\mathfrak{q})$ from
\begin{equation*}
\begin{split}
& \frac{N}{\mathcal{N}(\mathfrak{q})}\cdot \sum\limits_{n\in \mathbb{Z}} f\left(\frac{\sigma_1(m)\epsilon^{n}}{\sqrt{N}}\right)f\left(\frac{\sigma_2(m)\epsilon^{-n}}{\sqrt{N}}\right)\\ \ll &\frac{N}{\mathcal{N}(\mathfrak{q})} \sum\limits_{n=0}^{\infty}  \exp\left(-\pi \mathcal{D}\mathcal{C}\cdot \frac{\mathcal{N}(\mathfrak{q})}{N}\cdot \epsilon^{2n}\right) \cdot \min\left\{1,\left|\frac{\mathcal{N}(\mathfrak{q})}{N}\cdot \epsilon^{-2n}\right|^C\right\}
\\ \ll & \exp\left(-\pi \mathcal{D}\mathcal{C}\cdot \frac{\mathcal{N}(\mathfrak{q})}{N}\right)\cdot \log N
\end{split}
\end{equation*}
for a suitable constant $\mathcal{D}>0$. Here we use $\mathcal{N}(\mathfrak{q})\ge 1$ and \eqref{fdef}. 

At this point, we apply Lemma \ref{sigmasize} to establish a related result which will be needed only in subsection \ref{almostall}.

\begin{Lemma} \label{supplement}
For $q\in \mathcal{O}$, let
$$
\tilde{\mathcal{N}}(q)=\sigma_1(q)^2+\sigma_2(q)^2.
$$
Then we have 
$$
\sum\limits_{\substack{q\in \mathcal{O}\\ (q)=\mathfrak{a}}} \frac{1}{\tilde{\mathcal{N}}(q)} \ll \frac{1}{\mathcal{N}(\mathfrak{a})}
$$
for any non-zero (principal) ideal $\mathfrak{a}\in \mathcal{I}$.
\end{Lemma}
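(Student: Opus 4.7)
The plan is to exploit the parametrization of generators of $\mathfrak{a}$ by the fundamental unit, exactly as was done earlier in the section for the estimation of $\Psi(\mathfrak{q})$, but now applied to the harmonic weight $1/\tilde{\mathcal{N}}(q)$ instead of the Schwartz-type weight $f$.

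First I would invoke Lemma \ref{sigmasize} to fix one preferred generator $m$ of $\mathfrak{a}$ satisfying $|\sigma_i(m)|\asymp\sqrt{\mathcal{N}(\mathfrak{a})}$ for $i=1,2$. Every generator of $\mathfrak{a}$ is then of the form $\pm m\epsilon^{n}$ with $n\in\mathbb{Z}$ and $\epsilon>1$ the fundamental unit, for which $\sigma_2(\epsilon)=\pm\epsilon^{-1}$ since $\mathcal{N}(\epsilon)=\pm 1$. Consequently,
$$
\tilde{\mathcal{N}}(m\epsilon^n)=\sigma_1(m)^2\epsilon^{2n}+\sigma_2(m)^2\epsilon^{-2n},
$$
and the sum in question reduces, up to a factor of $2$ from the sign ambiguity, to
$$
S:=\sum_{n\in\mathbb{Z}}\frac{1}{\sigma_1(m)^2\epsilon^{2n}+\sigma_2(m)^2\epsilon^{-2n}}.
$$

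Next I would bound $S$ by a geometric series centered at the ``balance point'' $n_{0}$ where the two summands in the denominator coincide. Writing $A=\sigma_1(m)^2$ and $B=\sigma_2(m)^2$, so that $AB=\mathcal{N}(\mathfrak{a})^2$, one has the trivial inequality $A\epsilon^{2n}+B\epsilon^{-2n}\ge\max\{A\epsilon^{2n},B\epsilon^{-2n}\}$. Splitting the sum at $n_{0}$ (the real number with $A\epsilon^{2n_0}=B\epsilon^{-2n_0}$, so $A\epsilon^{2n_0}=\sqrt{AB}=\mathcal{N}(\mathfrak{a})$) gives
$$
S\ll\sum_{n\le n_0}\frac{1}{B\epsilon^{-2n}}+\sum_{n>n_0}\frac{1}{A\epsilon^{2n}}\ll\frac{1}{\mathcal{N}(\mathfrak{a})}\sum_{k\ge 0}\epsilon^{-2k}\ll\frac{1}{\mathcal{N}(\mathfrak{a})},
$$
since $\epsilon>1$. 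Combining this with the factor of $2$ for signs yields the claim.

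There is no genuine obstacle here: the only subtlety is that the estimate uses the ``balanced'' generator supplied by Lemma \ref{sigmasize} so that $|\sigma_1(m)\sigma_2(m)|\asymp\mathcal{N}(\mathfrak{a})$ at $n=n_0$ with matching sizes. If one were to use an unbalanced generator, the peak of the summand would be shifted but the value of the peak would still be $1/(2\sqrt{AB})=1/(2\mathcal{N}(\mathfrak{a}))$, so in fact the estimate holds independently of the choice of $m$; invoking Lemma \ref{sigmasize} is for conceptual convenience rather than strict necessity. The estimate is essentially optimal, matching the behaviour of the corresponding integral $\int_{\mathbb{R}}dn/(A\epsilon^{2n}+B\epsilon^{-2n})$, which evaluates (up to a constant depending only on $\epsilon$) to $\pi/(2\mathcal{N}(\mathfrak{a}))$.
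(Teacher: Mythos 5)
Your argument is correct and matches the paper's proof in all essentials: both fix a balanced generator $m$ via Lemma~\ref{sigmasize}, run over $\pm m\epsilon^{n}$, and sum a convergent geometric series. The only cosmetic difference is that the paper uses $\sigma_1(m)^2,\sigma_2(m)^2\asymp\mathcal{N}(\mathfrak{a})$ to fold both tails directly into $\sum_{n\ge 0}\epsilon^{-2n}/\mathcal{N}(\mathfrak{a})$ rather than splitting at an explicit balance point $n_0$, and your closing observation that the balanced generator is a convenience rather than a necessity is also accurate.
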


\begin{proof} As above, we may write 
$$
\sum\limits_{\substack{q\in \mathcal{O}\\ (q)=\mathfrak{a}}} \frac{1}{\tilde{\mathcal{N}}(q)}=\sum\limits_{n\in \mathbb{Z}}
 \frac{1}{\sigma_1\left(a\epsilon^n\right)^2+\sigma_2\left(a\epsilon^n\right)^2},
 $$
 where $(a)=\mathfrak{a}$ with $\sigma_{1,2}(a)\asymp \sqrt{\mathcal{N}(\mathfrak{a})}$ and $\epsilon$ is the fundamental unit. It follows that 
 $$
\sum\limits_{\substack{q\in \mathcal{O}\\ (q)=\mathfrak{a}}} \frac{1}{\tilde{\mathcal{N}}(q)}=\sum\limits_{n\in \mathbb{Z}}
 \frac{1}{\sigma_1\left(a\right)^2\epsilon^{2n}+\sigma_2\left(a\right)^2\epsilon^{-2n}}\ll \sum\limits_{n=0}^{\infty}
 \frac{1}{\mathcal{N}(\mathfrak{a})\epsilon^{2n}} \ll \frac{1}{\mathcal{N}(\mathfrak{a})},
 $$
 which completes the proof. 
\end{proof} 

Our bound \eqref{Fsize} follows from the exponential decay of the Gaussian and the fact that
$\Lambda(q;x_1,x_2)$, defined in \eqref{lattice},
is a shifted lattice in $\mathbb{R}^2$ with a fundamental parallelogram of area
equal to $1/\mathcal{N}(\mathfrak{q})$ and side lengths $\asymp 1/\sqrt{\mathcal{N}(\mathfrak{q})}$. To see the latter, we employ again Lemma \ref{sigmasize}: The set $\Lambda(q;x_1,x_2)$ remains the same if we replace
$q$ by any other generator of the ideal $\mathfrak{q}=(q)$. Using Lemma \ref{sigmasize}, we may choose $q$ in such a way that 
$\sigma_{1,2}(q)\asymp \sqrt{\mathcal{N}(\mathfrak{q})}$. Then
$$
\Lambda(q;x_1,x_2)=(x_1,x_2)-\left\{u{\bf a}+v{\bf b} : (u,v)\in \mathbb{Z}^2\right\}
$$
with 
$$
{\bf a}:=\left(\frac{1}{\sigma_1(q)},\frac{\sqrt{d}}{\sigma_2(q)}\right)
\quad \mbox{ and } \quad {\bf b}:=\left(\frac{1}{\sigma_1(q)},-\frac{\sqrt{d}}{\sigma_2(q)}\right),
$$ 
which are vectors of lengths $\asymp 1/\sqrt{\mathcal{N}(\mathfrak{q})}$ which span a parallelogram of area equal to $1/\mathcal{N}(\mathfrak{q})$,
as claimed. 

\section{Poisson summation}
In this section, we shall transform $F(\mathfrak{q})$, defined in \eqref{Fdef}, using the 2-dimensional Poisson summation formula, given below. 

\begin{Lemma} \label{Poisson}
Suppose that $f\in L^1(\mathbb{R}^n)$. Let $\hat{f}$ be the Fourier transform
of this function, defined as
$$
\hat{f}(y):=\int\limits_{\mathbb{R}^n} f(x)e(-x\cdot y) dx
$$
for $y\in \mathbb{R}^n$. Suppose that $\hat{f}\in L^1(\mathbb{R}^n)$ and
\begin{align*}
|f(x)|+|\hat{f}(x)|\ll  (1+|x|)^{-(n+\varepsilon)}
\end{align*}
for some $\varepsilon>0$ and all $x\in \mathbb{R}^n$, where $|x|$ is the Euclidean norm of $x$. Then for all $z\in \mathbb{R}^n$ we have 
\begin{align*}
\sum_{m\in\mathbb{Z}^n}\hat{f}(m)e(m\cdot z)=\sum_{m\in\mathbb{Z}^n}f(m+z),
\end{align*}
where the series on the left-hand and right-hand sides are absolutely convergent, respectively. 
In particular, 
$$
\sum\limits_{m\in\mathbb{Z}^n}\hat{f}(m)=\sum\limits_{m\in\mathbb{Z}^n}f(m).
$$
\end{Lemma}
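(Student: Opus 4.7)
The plan is to prove Poisson summation in the standard way: periodize $f$ and expand the resulting function on the torus $\mathbb{R}^n/\mathbb{Z}^n$ into its Fourier series. Set
$$
F(z):=\sum_{m\in\mathbb{Z}^n} f(m+z).
$$
The decay hypothesis $|f(x)|\ll (1+|x|)^{-(n+\varepsilon)}$ makes this sum absolutely and uniformly convergent on compact subsets of $\mathbb{R}^n$ (dominate by $\sum_m (1+|m|)^{-(n+\varepsilon)}$, which converges by integral comparison), so $F$ is continuous, and it is manifestly $\mathbb{Z}^n$-periodic since shifting $z$ by a lattice vector only reindexes the sum.

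Next I would compute the Fourier coefficients of $F$ on the torus. For $k\in\mathbb{Z}^n$, the absolute convergence lets me apply Fubini:
$$
c_k:=\int_{[0,1]^n} F(z) e(-k\cdot z)\, dz = \sum_{m\in\mathbb{Z}^n} \int_{[0,1]^n} f(m+z)e(-k\cdot z)\, dz.
$$
In each summand I substitute $y=m+z$ and use the identity $e(-k\cdot(y-m))=e(-k\cdot y)$, which is valid because $k\cdot m\in\mathbb{Z}$. Since the translates $m+[0,1]^n$ tile $\mathbb{R}^n$ up to a null set, the right-hand side collapses to $\int_{\mathbb{R}^n} f(y) e(-k\cdot y)\, dy=\hat{f}(k)$.

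For the final step I would invoke the second decay hypothesis $|\hat{f}(x)|\ll (1+|x|)^{-(n+\varepsilon)}$, which yields $\sum_{k\in\mathbb{Z}^n}|\hat{f}(k)|<\infty$ by the same integral comparison. Hence $\sum_{k}\hat{f}(k)e(k\cdot z)$ converges absolutely and uniformly on $\mathbb{R}^n$ to a continuous $\mathbb{Z}^n$-periodic function whose Fourier coefficients are exactly the $\hat{f}(k)$, matching those of $F$. Since a continuous periodic function is uniquely determined by its Fourier coefficients (apply Fejér's theorem: both functions are the uniform limit of the Cesàro means of the same Fourier series), we conclude
$$
\sum_{m\in\mathbb{Z}^n} f(m+z)=\sum_{k\in\mathbb{Z}^n}\hat{f}(k)e(k\cdot z)
$$
for every $z\in\mathbb{R}^n$; specializing to $z=0$ yields the last displayed identity of the lemma.

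I do not anticipate a genuine obstacle: the two decay hypotheses are tailored precisely so that the periodization $F$ converges absolutely to a continuous function and so that the Fourier series of $F$ converges absolutely. The only mildly delicate point is the unconditional pointwise equality of a continuous periodic function with its Fourier series, and this is textbook harmonic analysis once absolute summability of the coefficients is in hand. Everything else — verifying $\mathbb{Z}^n$-periodicity, the Fubini swap, and the tiling of $\mathbb{R}^n$ by translates of $[0,1]^n$ — is routine bookkeeping under the stated bounds.
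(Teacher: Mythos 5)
The paper does not actually prove this lemma; it simply refers the reader to Bump's book for a proof. Your argument is the standard periodization proof of Poisson summation and is correct as written: the decay of $f$ gives absolute and locally uniform convergence of the periodization $F$, Fubini--Tonelli and the tiling of $\mathbb{R}^n$ by unit-cube translates identify the Fourier coefficients of $F$ with $\hat{f}(k)$, and the decay of $\hat{f}$ makes $\sum_k \hat{f}(k)e(k\cdot z)$ an absolutely and uniformly convergent continuous periodic function; equality with $F$ then follows from uniqueness of Fourier coefficients for continuous periodic functions (via Fej\'er, as you say). One cosmetic remark: the Fubini step is most cleanly justified by noting $\sum_m \int_{[0,1]^n}|f(m+z)|\,dz = \|f\|_{L^1(\mathbb{R}^n)}<\infty$, which makes the interchange immediate; you implicitly rely on this but it is worth stating. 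Since the paper offers no proof to compare against, there is nothing further to reconcile.
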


\begin{proof} See, for example, \cite{Bump}.
\end{proof}

Using Lemma \ref{Poisson}, we shall establish the following.

\begin{Lemma} \label{poissontransform}
We have
\begin{equation} \label{FPoisson}
F(\mathfrak{q})=\frac{\mathcal{N}(\mathfrak{q})}{N}\cdot \frac{\delta^2}{2\sqrt{d}}\cdot \sum\limits_{p\in\mathcal{O}}e\left(\frac{\sigma_2(pq)x_2-\sigma_1(pq)x_1}{2\sqrt{d}}\right) \cdot \exp\left(-\pi\cdot 
\frac{\sigma_1(pq)^2+\sigma_2(pq)^2}{4d}\cdot \frac{\delta^2}{N}\right).
\end{equation}
\end{Lemma}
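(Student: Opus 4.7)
The plan is to exploit the assumption $d\equiv 3\pmod{4}$, which gives $\mathcal{O}=\mathbb{Z}[\sqrt{d}]$, to parametrize $p\in\mathcal{O}$ as $p=m+n\sqrt{d}$ with $(m,n)\in\mathbb{Z}^2$. Then $F(\mathfrak{q})$ becomes a sum of a function $h\colon\mathbb{R}^2\to\mathbb{R}$ over the standard integer lattice, and I would apply the two-dimensional Poisson summation formula (Lemma \ref{Poisson}). The integrability and decay hypotheses are automatic: $h$ is, up to an affine change of coordinates, a product of two one-dimensional Gaussians, so the same will be true of $\hat h$.

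To compute $\hat h(k_1,k_2)$ I would use the linear change of variables
\[
u = x_1-\frac{m+n\sqrt{d}}{\sigma_1(q)}, \qquad v = x_2-\frac{m-n\sqrt{d}}{\sigma_2(q)},
\]
which diagonalizes the integrand into $\exp(-\pi N u^2/\delta^2)\exp(-\pi N v^2/\delta^2)$; the Jacobian has modulus $\mathcal{N}(\mathfrak{q})/(2\sqrt{d})$, and this is the source of the prefactor in the claimed identity. Solving for $(m,n)$ and expanding the phase, I would then identify the dual index through the algebraic integer $p:=k_2+k_1\sqrt{d}\in\mathcal{O}$; using $k_1\sqrt{d}+k_2=\sigma_1(p)$ and $k_1\sqrt{d}-k_2=-\sigma_2(p)$, a short computation rearranges
\[
-(k_1 m+k_2 n)=\frac{\sigma_2(pq)x_2-\sigma_1(pq)x_1}{2\sqrt{d}}+\frac{\sigma_1(pq)u-\sigma_2(pq)v}{2\sqrt{d}}.
\]
The first summand is constant in $(u,v)$ and pulls out of the integral; the second combines with the diagonal Gaussian and is evaluated, coordinate by coordinate, via the elementary identity $\int_{\mathbb{R}}\exp(-\pi a t^2)\,e(\alpha t)\,dt=a^{-1/2}\exp(-\pi\alpha^2/a)$ with $a=N/\delta^2$. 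Multiplying the two one-dimensional Gaussian transforms yields the factor
\[
\frac{\delta^2}{N}\exp\left(-\pi\cdot\frac{\sigma_1(pq)^2+\sigma_2(pq)^2}{4d}\cdot\frac{\delta^2}{N}\right).
\]
Reindexing the outer Poisson sum by $p\in\mathcal{O}$ (in bijection with $(k_1,k_2)\in\mathbb{Z}^2$) then yields the asserted identity.

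The only place where care is needed is the identification of the Poisson dual $\mathbb{Z}^2$ with $\mathcal{O}$ via the substitution $p=k_2+k_1\sqrt{d}$. Conceptually this reflects the duality of $\mathcal{O}$ under the trace pairing with the inverse different $(2\sqrt{d})^{-1}\mathcal{O}$, and concretely it is visible in the asymmetric sign linking $(k_1,k_2)$ to $(\sigma_1(p),\sigma_2(p))$; getting this right is what turns the sum of Fourier coefficients into a sum over $\mathcal{O}$ with the clean expression $\sigma_2(pq)x_2-\sigma_1(pq)x_1$ in the exponential. Modulo this bookkeeping, the proof is a routine Gaussian Fourier computation.
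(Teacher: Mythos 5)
Your proposal is correct and follows essentially the same route as the paper: parametrize $p\in\mathcal{O}=\mathbb{Z}[\sqrt{d}]$ by $\mathbb{Z}^2$, diagonalize the Gaussian via the affine change of variables sending $(m,n)$ to $\bigl(x_1-\sigma_1(p)/\sigma_1(q),\,x_2-\sigma_2(p)/\sigma_2(q)\bigr)$, compute the Jacobian to obtain the prefactor, evaluate the resulting product of one-dimensional Gaussian Fourier integrals, and reindex the dual lattice $\mathbb{Z}^2$ by $p=k_2+k_1\sqrt d\in\mathcal{O}$ (which is exactly the paper's identification $\alpha\leftrightarrow k_1$, $\beta\leftrightarrow k_2$, $p=\beta+\alpha\sqrt d$). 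The only cosmetic difference is that the paper folds the factor $\sqrt N/\delta$ into the change of variables so that the Jacobian directly yields $\mathcal{N}(\mathfrak q)\delta^2/(2\sqrt d\,N)$, whereas you produce the same $\delta^2/N$ from the Gaussian integral.
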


\begin{proof}
Let $f:\mathbb{R}^2\rightarrow \mathbb{C}$ be defined as
\begin{align*}
    f(x,y)=\Omega_{\delta/\sqrt{N}}\left(x_1-\frac{x+y\sqrt{d}}{\sigma_1(q)}\right)\Omega_{\delta/\sqrt{N}}\left(x_2-\frac{x-y\sqrt{d}}{\sigma_2(q)}\right)
\end{align*} 
so that 
$$
\Omega_{\delta/\sqrt{N}}\left(x_1-\frac{\sigma_1(p)}{\sigma_1(q)}\right)\Omega_{\delta/\sqrt{N}}\left(x_2-\frac{\sigma_2(p)}{\sigma_2(q)}\right)=f(x+y\sqrt{d})
$$
if $p=x+y\sqrt{d}$. The exponential decay of the functions $\Omega_{\delta/\sqrt{N}}$ ensures that $f\in L^1(\mathbb{R}^n)$. 
Now we calculate the Fourier transform of $f$. Making the linear change of variables
\begin{align*}
    u=\frac{\sqrt{N}}{\delta}\cdot \left(x_1-\frac{x+y\sqrt{d}}{\sigma_1(q)}\right),\quad
    v=\frac{\sqrt{N}}{\delta}\cdot\left(x_2-\frac{x-y\sqrt{d}}{\sigma_2(q)}\right),
\end{align*}
and using the definition of $\Omega_{\Delta}$ in \eqref{Omegadef}, we obtain
\begin{equation} \label{doubleint}
\begin{split}
    \hat{f}(\alpha,\beta)=& \int\limits_{\mathbb{R}^2} f(x,y)e(-x\alpha-y\beta) dydx\\ 
    = & \frac{\mathcal{N}(\mathfrak{q})}{N}\cdot \frac{\delta^2}{2\sqrt{d}} \cdot e(-A\alpha-B\beta)\cdot
    \int\limits_{\mathbb{R}^2}\exp\left(-\pi(u^2+v^2)\right) \cdot  e\left(C(\alpha,\beta)u-D(\alpha,\beta)v\right)dvdu,
\end{split}
\end{equation}
where 
\begin{equation*} \label{ABCD}
\begin{split}
A:=& \frac{\sigma_1(q)x_1+\sigma_2(q)x_2}{2}, \\ 
B:=& \frac{\sigma_1(q)x_1-\sigma_2(q)x_2}{2\sqrt{d}}, \\
C(\alpha,\beta):= & \frac{(\beta+\sqrt{d}\alpha)\sigma_1(q)}{2\sqrt{d}} \cdot
\frac{\delta}{\sqrt{N}},\\ 
D(\alpha,\beta):= & \frac{(\beta-\sqrt{d}\alpha)\sigma_2(q)}{2\sqrt{d}}\cdot
\frac{\delta}{\sqrt{N}}.
\end{split} 
\end{equation*}
Calculating the double integral in the second line of \eqref{doubleint}, we get
\begin{align*}
\begin{split}
    \hat{f}(\alpha,\beta)= & \frac{\mathcal{N}(\mathfrak{q})}{N}\cdot \frac{\delta^2}{2\sqrt{d}}\cdot e\left(\frac{(\beta-\alpha\sqrt{d})\sigma_2{(q)}x_2-(\beta+\alpha\sqrt{d})\sigma_1{(q)}x_1}{2\sqrt{d}}\right)\times\\ & \exp\left(-\pi\cdot \frac{(\beta+\alpha\sqrt{d})^2\sigma_1(q)^2+(\beta-\alpha\sqrt{d})^2\sigma_2(q)^2}{4d}\cdot \frac{\delta^2}{N}\right).
\end{split}
\end{align*}
Clearly, $\hat{f}\in L^1(\mathbb{R}^2)$. Now the Poisson summation formula
implies the result of Lemma \eqref{poissontransform} after recalling the definition
of $F(\mathfrak{q})$ in \eqref{Fdef}.
\end{proof}

Plugging \eqref{FPoisson} into \eqref{2.2} and recalling the definition of $\tilde{\Psi}(\mathfrak{q})$ in \eqref{Psidef}, we obtain
\begin{align} \label{2.3}
    \tilde{\omega}(\mathfrak{q})
    =  \Psi(\mathfrak{q})\cdot \frac{\delta^2}{2\sqrt{d}}\cdot \sum\limits_{p\in\mathcal{O}}e\left(\frac{\sigma_2(pq)x_2-\sigma_1(pq)x_1}{2\sqrt{d}}\right) \cdot \exp\left(-\pi\cdot 
\frac{\sigma_1(pq)^2+\sigma_2(pq)^2}{4d}\cdot \frac{\delta^2}{N}\right). 
    \end{align}
We note that the right-hand side is still independent of the choice of the generator $q$ of $\mathfrak{q}$. Further, we observe that $\omega(\mathfrak{q})$, defined in \eqref{2.1},
equals the contribution of $p=0$ on the right-hand side of \eqref{2.3} so that 
\begin{equation} \label{difference}
\begin{split}
 \tilde{\omega}(\mathfrak{q})-\omega(\mathfrak{q})
    = & \Psi(\mathfrak{q})\cdot \frac{\delta^2}{2\sqrt{d}}\cdot \sum\limits_{p\in\mathcal{O}\setminus\{0\}}e\left(\frac{\sigma_2(pq)x_2-\sigma_1(pq)x_1}{2\sqrt{d}}\right) \times \\ & \exp\left(-\pi\cdot 
\frac{\sigma_1(pq)^2+\sigma_2(pq)^2}{4d}\cdot \frac{\delta^2}{N}\right). 
\end{split}
\end{equation}
This will be essential in establishing a non-trivial bound for the difference $\tilde{\mathcal{T}}(N)-\mathcal{T}(N)$. The next section provides 
a version of Harman's sieve for quadratic fields, which will be a key tool in what follows.

\section{Harman's Sieve for quadratic number fields} \label{Harmansieve}
Throughout this section, we shall use the following notations.
\subsection{Notations} \label{nota}
\begin{itemize}
\item We denote by $\mathbb{R}_{\ge 0}$ the set of non-negative real numbers.
\item We suppose that $\mathbb{K}\subseteq \mathbb{R}$ is a quadratic field with ring of integers $\mathcal{O}$.
\item We denote by $\mathcal{I}$ the set of all ideals of $\mathcal{O}$. 
\item We denote by $\mathbb{P}$ the set of all non-zero prime ideals of $\mathcal{O}$.
\item We set $\mathbb{P}(z)$ = $\{\mathfrak{p}\in\mathbb{P}: \mathcal{N(\mathfrak{p})}<z\}$.
\item We set \begin{align*}
\mathcal{P}(z) = \prod_{\mathfrak{p}\in\mathbb{P}(z)} \mathfrak{p}.
\end{align*}
\item We denote by $d_k(\mathfrak{a})$ the number of ways to write an ideal $\mathfrak{a}\subseteq\mathcal{O}$ as a product of $k$ ideals. In particular, 
$d_2(\mathfrak{a})=d(\mathfrak{a})$ is the number of ideal divisors of $\mathfrak{a}$. 
\item We write $(\mathfrak{a},\mathfrak{b})=1$ if the ideals $\mathfrak{a}$ and $\mathfrak{b}$ are coprime, i.e., $\mathcal{O}$ is the only divisor of both $\mathfrak{a}$ and $\mathfrak{b}$. 
\end{itemize}

\subsection{The sieve result}
In the appendix, we shall prove the following weighted version of Harman's asymptotic sieve for ideals in the ring of integers of a quadratic field (not necessarily real quadratic and not necessarily of class number 1).

\begin{Theorem}[Weighted version of Harman's asymptotic sieve for quadratic fields] \label{T4.1} Let $x\geq3$ be real and let $\omega,\Tilde{\omega}:\mathcal{I}\longrightarrow \mathbb{R}_{\ge 0}$ be two bounded functions such that, for both $w=\omega$ and $w=\Tilde{\omega}$,
  \begin{align} \label{4.1}
  \lim_{R\to \infty} \sum_{\substack{\mathfrak{a}\in \mathcal{I} \\ \mathcal{N}(\mathfrak{a})<R}} d_4(\mathfrak{a})w(\mathfrak{a})\ll X  
\end{align}
for $X\geq 1$. Suppose further one has $Y>1$, $0<\mu<1$, $0<\kappa\le 1/2$ and $M\in(x^\mu,x)$ with the following property: \\
For any sequences $(a_\mathfrak{a})_{\mathfrak{a} \in \mathcal{I}}$,$(b_\mathfrak{b})_{\mathfrak{b} \in \mathcal{I}}$ of complex numbers with $|a_\mathfrak{a}|\leq 1$ and $|b_\mathfrak{b}|\leq d(\mathfrak{b})$, one has,
\begin{align} \label{4.2}
    T_{I}:=\Bigg|\mathop{\sum\sum}\limits_{\substack{\mathfrak{a},\mathfrak{b} \in \mathcal{I}\setminus{0}\\ \mathcal{N}(\mathfrak{a})<M}} a_\mathfrak{a} (\omega(\mathfrak{ab})-\Tilde{\omega}(\mathfrak{ab}))\Bigg| \leq Y 
\end{align}
and 
\begin{align} \label{4.3}
    T_{II}:=\Bigg|\mathop{\sum\sum}\limits_{\substack{\mathfrak{a},\mathfrak{b} \in \mathcal{I}\setminus{0}\\ x^\mu<\mathcal{N}(\mathfrak{a})<x^{\mu+\kappa}}} a_\mathfrak{a} b_\mathfrak{b} (\omega(\mathfrak{ab})-\Tilde{\omega}(\mathfrak{ab}))\Bigg| \leq Y.
\end{align}
Then 
\begin{equation} \label{thediff}
    |S(\omega,x^\kappa)-S(\Tilde{\omega},x^\kappa)|\ll Y(\log(xX))^3,
\end{equation}
where
\begin{align*}
S(w,z)=\sum_{\substack{\mathfrak{a}\in \mathcal{I}\\(\mathfrak{a},\mathcal{P}(z))=1}} w(\mathfrak{a}).
\end{align*}
\end{Theorem}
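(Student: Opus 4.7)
The plan is to prove Theorem \ref{T4.1} by adapting the standard proof of Harman's asymptotic sieve (see, e.g., Harman's monograph \emph{Prime-Detecting Sieves}) from the rational-integer setting to the ideal-theoretic one. The central tool is Buchstab's identity for ideals,
\begin{equation*}
S(w, z_2) = S(w, z_1) - \sum_{\substack{\mathfrak{p} \in \mathbb{P} \\ z_1 \le \mathcal{N}(\mathfrak{p}) < z_2}} S(w_{\mathfrak{p}}, \mathcal{N}(\mathfrak{p})), \qquad w_{\mathfrak{p}}(\mathfrak{a}) := w(\mathfrak{p}\mathfrak{a}),
\end{equation*}
proved by classifying each ideal coprime to $\mathcal{P}(z_2)$ but not to $\mathcal{P}(z_1)$ by its smallest prime divisor with norm in $[z_1, z_2)$. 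This identity carries over verbatim from $\mathbb{Z}$ to any Dedekind domain. Iterating it, I would express $S(\omega, x^\kappa) - S(\tilde{\omega}, x^\kappa)$ as a signed sum over decreasing tuples $(\mathfrak{p}_1, \ldots, \mathfrak{p}_r)$ of prime ideals, of differences of sifted sums of the shifted weights $\omega_{\mathfrak{p}_1 \cdots \mathfrak{p}_r}$ and $\tilde{\omega}_{\mathfrak{p}_1 \cdots \mathfrak{p}_r}$.

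For each such tuple I would classify the contribution by the norm $\mathcal{N}(\mathfrak{P}) := \mathcal{N}(\mathfrak{p}_1 \cdots \mathfrak{p}_r)$. If $\mathcal{N}(\mathfrak{P}) < M$, I expand the innermost sifted sum via the ideal M\"obius function truncated at a level $D \le M$, and reorganize the expansion so that the outer variable $\mathfrak{a}$ ranges over ideals of norm less than $M$ with coefficient $|a_{\mathfrak{a}}| \le 1$; hypothesis \eqref{4.2} then gives an $O(Y)$ bound. If some sub-product of the $\mathfrak{p}_i$'s has norm in $(x^\mu, x^{\mu+\kappa})$, I group the tuple into that sub-product (playing the role of $\mathfrak{a}$) and the remainder combined with the inner variables (playing the role of $\mathfrak{b}$, with $|b_{\mathfrak{b}}| \le d(\mathfrak{b})$ from bounded prime multiplicities), and invoke \eqref{4.3} for another $O(Y)$ bound. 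In all other cases, I apply Buchstab once more to the offending term. Tails of the truncated M\"obius expansions in the Type I step are absorbed by the divisor-moment bound \eqref{4.1}, namely $\sum_{\mathcal{N}(\mathfrak{a}) < R} d_4(\mathfrak{a}) w(\mathfrak{a}) \ll X$.

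The main obstacle is the combinatorial argument that the Buchstab iteration terminates after $O(1)$ rounds and that every tuple is eventually classified. One must show that any tuple with $\mathcal{N}(\mathfrak{P}) \ge M$ whose sub-products all avoid $(x^\mu, x^{\mu+\kappa})$ has a rigid structure, essentially forcing the primes to be either very small or very large; a further Buchstab step then necessarily produces a sub-product lying in the Type II window. At most three Buchstab iterations suffice, which is precisely why a $d_4$ (rather than $d_2$) divisor-moment hypothesis is required to control the resulting remainders. Each iteration contributes a factor $\log(xX)$ from the prime summation combined with the divisor-moment bound, yielding the final loss $(\log(xX))^3$. The only genuine modification to Harman's original argument needed is to replace rational divisor functions and the Dirichlet hyperbola method by their ideal-theoretic analogs, which is routine since $\mathcal{O}$ is a Dedekind domain with residue fields of bounded residue degree over $\mathbb{Z}$.
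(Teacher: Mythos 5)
Your proposal captures the overall strategy — iterate Buchstab on the ideals and classify the resulting pieces by whether they can be fed into the Type~I or Type~II hypothesis — and this is indeed the skeleton of the paper's proof. But there is a genuine gap that your sketch does not address, and several of your quantitative claims are off.

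The gap concerns the Type~II step. After applying the iteration once more inside the ``norm $\ge M$'' part, the sums one is left with have the shape
\begin{equation*}
\sum_{\mathfrak{a}=\mathfrak{p}_1\cdots\mathfrak{p}_s}\;\sum_{\mathfrak{n}=\mathfrak{b}\mathfrak{p}\mathfrak{d}}\;\mu(\mathfrak{d})\,1_{\{\mathcal{N}(\mathfrak{a}\mathfrak{p}\mathfrak{d})\geq M\}}\,1_{\{\mathfrak{p}\prec\mathfrak{p}_s\}}\,w(\mathfrak{a}\mathfrak{n}),
\end{equation*}
and although $\mathfrak{a}=\mathfrak{p}_1\cdots\mathfrak{p}_s$ automatically has norm in $(x^\mu,x^{\mu+\kappa})$ (since $\mathcal{N}(\mathfrak{p}_1\cdots\mathfrak{p}_{s-1})\leq x^\mu$ and $\mathcal{N}(\mathfrak{p}_s)<x^\kappa$, so no ``rigid structure'' argument is required), the indicators $1_{\{\mathcal{N}(\mathfrak{a}\mathfrak{p}\mathfrak{d})\geq M\}}$ and $1_{\{\mathfrak{p}\prec\mathfrak{p}_s\}}$ \emph{entangle} the outer variable $\mathfrak{a}$ (through $\mathfrak{p}_s$) with the inner variables $\mathfrak{p},\mathfrak{d}$. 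The coefficients therefore do not factor as $a_{\mathfrak{a}}b_{\mathfrak{n}}$, and hypothesis \eqref{4.3} cannot be applied directly. The paper's proof resolves this with a Fourier-analytic separation-of-variables device (``cosmetic surgery,'' Lemma~\ref{L3.2}), replacing the indicators by oscillatory integrals $\frac1\pi\int_{-T}^T e^{i\gamma t}\sin(\rho t)\,\frac{dt}{t}$, so that for each fixed $(t,\tau)$ the coefficients \emph{do} factor and \eqref{4.3} can be invoked inside the integral; the error terms from truncating the integrals at $T=xX$ are controlled by the $d_4$-moment bound \eqref{4.1}. Your proposal has no substitute for this step.

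Two further points. First, the iteration does not terminate after $O(1)$ rounds: since prime ideal norms can be as small as $2$, one needs $t\asymp \log x/\log 2$ iterations before the residual class is empty, and the $(\log(xX))^3$ in \eqref{thediff} arises from the $O(\log x)$ sum over iteration depth combined with the $\log$-losses in the two separation-of-variables integrals — not from ``at most three Buchstab iterations.'' Second, the $d_4$-moment hypothesis \eqref{4.1} is needed because the error term involves $w(\mathfrak{a}\mathfrak{n})$ summed over a four-fold decomposition $\mathfrak{a}\mathfrak{n}=(\mathfrak{p}_1\cdots\mathfrak{p}_s)\cdot\mathfrak{b}\cdot\mathfrak{p}\cdot\mathfrak{d}$, not because of any iteration count.
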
 

\subsection{Applying Harman's sieve to real quadratic fields}
Now we apply Theorem \ref{T4.1} with 
$$
   \mathbb{K}:=\mathbb{Q}(\sqrt{d}),\quad\mathcal{O}=\mathbb{Z}[\sqrt{d}],\quad x^{-1/2}<\delta\leq 1/2,\quad N:=\left\lceil x^{1-\varepsilon}\right\rceil, \quad X:=x, \quad \kappa:=1/2,
$$
$$
   M:=2x^{1/4}, \quad \mu:=1/4
$$
and $\omega$ and $\tilde{\omega}$ as defined in \eqref{2.1} and \eqref{2.2}. In the following, we take into account that for every given $k\in\mathbb{N}$ and $\varepsilon>0$, $d_k(\mathfrak{q})\leq {\mathcal{N}(\mathfrak{q})}^{\varepsilon}$ if $\mathcal{N}(\mathfrak{q})$ is large enough. We shall obtain non-trivial estimates for the bilinear sums in \eqref{4.2} and \eqref{4.3} for the above choices of variables. According to the usual terminology, the sum in \eqref{4.2} is called type I and that in \eqref{4.3} type II sum. We observe that under the above conditions the sums $\tilde{\mathcal{T}}(N)$ and $\mathcal{T}(N)$ defined in \eqref{2.4} and \eqref{2.5} satisfy
\begin{align} \label{5.1}
  \mathcal{T}(N)=S(\omega,x^\kappa)+O\left(x^{1/2}\log x\right) 
\end{align}
and 
\begin{align} \label{5.2}
    \tilde{\mathcal{T}}(N)=S(\Tilde{\omega},x^\kappa)+O\left(x^{1/2}\log x\right).
\end{align}
This is because
\begin{equation*}
\begin{split}
S(w,x^{\kappa})= S(w,x^{1/2})= & \sum_{\substack{\mathfrak{a}\in \mathcal{I}\\ (\mathfrak{a},\mathcal{P}(z))=1\\ \mathcal{N}(\mathfrak{a})\le x}} w(\mathfrak{a}) +O\left(x^{-100}\right)\\ 
= & w(\mathcal{O})+\sum_{\substack{\mathfrak{p}\in \mathbb{P}\\ x^{1/2}<\mathcal{N}(\mathfrak{p})\le x}} w(\mathfrak{p})+O\left(x^{-100}\right) 
\\ = &
\sum_{\substack{\mathfrak{p}\in \mathbb{P}\\ \mathcal{N}(\mathfrak{p})\le x}} w(\mathfrak{p})+O\left(x^{1/2}\log x\right)\\ = &
\sum_{\mathfrak{p}\in \mathbb{P}} w(\mathfrak{p})+O\left(x^{1/2}\log x\right)
\end{split}
\end{equation*}
for $w=\omega,\tilde{\omega}$, where we use \eqref{omegabounds}
and $\delta\le 1$. 
In the next sections, we will deal with the Type I and Type II sums.

\section{Treatment of the type II sum}
\subsection{Initial transformations} \label{It}
Plugging \eqref{difference} into \eqref{4.3}, we obtain
\begin{equation*} \begin{split}
T_{II}=&\bigg|\mathop{\sum\sum}_{\substack{\mathfrak{m}, \mathfrak{n} \in \mathcal{I}\setminus{0}\\ x^\mu<\mathcal{N}(\mathfrak{m})<x^{\mu+\kappa}}} a_{\mathfrak{m}} b_{\mathfrak{n}} (\omega(\mathfrak{mn})-\Tilde{\omega}(\mathfrak{mn})\bigg|\\
= & \frac{\delta^2}{2\sqrt{d}}\cdot \bigg|\mathop{\sum\sum}_{\substack{\mathfrak{m},\mathfrak{n} \in \mathcal{I}\setminus{0}\\ x^\mu<\mathcal{N}(m)<x^{\mu+\kappa}}} a_{\mathfrak{m}} b_{\mathfrak{n}} 
\Psi(\mathfrak{mn})\cdot \sum\limits_{p\in \mathcal{O}\setminus\{0\}} E(pk)\mathcal{E}(pk)\bigg|,
\end{split}
\end{equation*}
where $k$ is {\it any} generator of the ideal $\mathfrak{mn}$,
and we set 
\begin{equation*}
E(l):= 
\exp\left(-\pi\cdot \frac{\sigma_1(l)^2+\sigma_2(l)^2}{4d}\cdot \frac{\delta^2}{N}\right)
\end{equation*}
and 
\begin{equation*}
\mathcal{E}(l):=e\left(\frac{\sigma_2(l)x_2-\sigma_1(l)x_1}{2\sqrt{d}}\right).
\end{equation*}
We also use the notation
$$
G(k):=f\left(\frac{\sigma_1(k)}{\sqrt{N}}\right)f\left(\frac{\sigma_2(k)}{\sqrt{N}}\right)
$$
below, where $f$ is defined as in \eqref{fdef}. We remind the reader that we aim to achieve a bound of size $O(\delta^2 N^{1-\varepsilon})$ for $T_{II}$. Since we allow $\delta$ to be as small as $N^{-\nu+15\varepsilon}$ in Theorem \ref{difftheorem} (with $\nu=1/8-\varepsilon$ in the best case), we need to push our final bound for $T_{II}$ to $O(N^{1-2\nu+29\varepsilon})$ (which is $O(N^{3/4+31\varepsilon})$ in the best case). 

We shall choose a generator $m$ for each of the ideals 
$\mathfrak{m}$ in such a 
way that $|\sigma_i(m)|\asymp \sqrt{\mathcal{N}(m)}$ for $i=1,2$, which is possible due to Lemma \ref{sigmasize}. Moreover, using the definition of 
$\Psi(\mathfrak{mn})$ and the independence of the sum over $p$ from the choice of $k$ as generator of $\mathfrak{mn}$, we write, for $\mathfrak{m},\mathfrak{n}$ and $m$ fixed,
\begin{equation}
\begin{split}
\Psi(\mathfrak{mn})\cdot \sum\limits_{p\in \mathcal{O}\setminus\{0\}} E(pk)\mathcal{E}(pk) = & \sum\limits_{\substack{k\in \mathcal{O}\\ (k)=\mathfrak{mn}}} 
G(k) \sum\limits_{p\in \mathcal{O}\setminus\{0\}} E(pk)\mathcal{E}(pk)\\
= & 
\sum\limits_{\substack{n\in \mathcal{O}\\ (n) = \mathfrak{n}}} 
\sum\limits_{p\in \mathcal{O}\setminus\{0\}} G(mn)E(pmn)\mathcal{E}(pmn)
\end{split}
\end{equation}
on setting $n:=k/m$. 
Unfolding the sum
$$
\sum\limits_{\mathfrak{n}\in I\setminus\{0\}} \sum\limits_{\substack{n\in
\mathcal{O}\\ (n)=\mathfrak{n}}},
$$
we therefore obtain
\begin{equation*}
T_{II}=\frac{\delta^2}{2\sqrt{d}}\cdot \bigg|\sum\limits_{
\substack{m\in \mathcal{R}\\ x^\mu<\mathcal{N}(m)<x^{\mu+\kappa}}}
\sum\limits_{n\in \mathcal{O}\setminus\{0\}} a_{m} b_{n} \sum\limits_{p\in \mathcal{O}\setminus\{0\}} E(p,mn)\mathcal{E}(pmn)\bigg|,
\end{equation*}
where $\mathcal{R}$ is a maximal system of mutually non-associate elements $m$ of
$\mathcal{O}$ satisfying $|\sigma_i(m)|\asymp \sqrt{\mathcal{N}(m)}$ for $i=1,2$, and 
$$
a_m:=a_{(m)}, \quad b_n:=b_{(n)}
$$
and 
\begin{equation} \label{tildeE}
\begin{split}
E(p,k):= & 
f\left(\frac{\sigma_1(k)}{\sqrt{N}}\right)f\left(\frac{\sigma_2(k)}{\sqrt{N}}\right)\exp\left(-\pi\cdot \frac{\sigma_1(pk)^2+\sigma_2(pk)^2}{4d}\cdot \frac{\delta^2}{N}\right).
\end{split}
\end{equation}

\subsection{Cutting off summations} \label{Cs}
For convenience, we would like to cut off the summation over $n$ and $p$ at appropriate points so that we are left with finite sums only. Taking  
$|\sigma_i(m)|\asymp \sqrt{\mathcal{N}(m)}$ for $i=1,2$ and $N:=\left\lceil x^{1-\varepsilon}\right\rceil$
into account, the weight function 
$$
G(mn)=f\left(\frac{\sigma_1(mn)}{\sqrt{N}}\right)f\left(\frac{\sigma_2(mn)}{\sqrt{N}}\right)
$$
becomes negligible if 
$$
\max\{|\sigma_1(n)|,|\sigma_2(n)|\}> \sqrt{\frac{x}{\mathcal{N}(m)}}.
$$
Therefore, it suffices to take only those $n$'s into consideration for which 
\begin{equation} \label{sigma1sigma2}
\max\{|\sigma_1(n)|,|\sigma_2(n)|\}\le \sqrt{\frac{x}{\mathcal{N}(m)}}.
\end{equation}
Moreover, we may discard all $p$'s for which
\begin{align} \label{pdiscard} 
    \frac{\sigma_1(pmn)^2+\sigma_2(pmn)^2}{4d}\cdot \frac{\delta^2}{N}\gg x^{\varepsilon}.
\end{align}
Now comes the point where we make use of the exponent $\mathcal{C}$. If
\begin{equation} \label{it}
\sigma_1(mn)^2\le x^{-\varepsilon}N \quad \mbox{ or } \quad \sigma_2(mn)^2\le x^{-\varepsilon}N,
\end{equation}
then 
$$
G(mn)\ll x^{-2\mathcal{C}\varepsilon}
$$
using \eqref{useful}.
Now we may choose 
$$
\mathcal{C}:=\left\lceil \frac{100}{\varepsilon} \right\rceil
$$ 
so that 
$$
E(mn)\ll x^{-200}
$$
if $mn$ satisfies \eqref{it}.
The contribution of these $mn$ becomes negligibly small so that we can assume that
$$
\sigma_1(mn)^2> x^{-\varepsilon}N \quad \mbox{ and } \quad \sigma_2(mn)^2> x^{-\varepsilon}N
$$
in which case inequality \eqref{pdiscard} holds if  
$$
|\sigma_1(p)|> x^{\varepsilon}\delta^{-1} \quad \mbox{ or } \quad  
|\sigma_2(p)|> x^{\varepsilon}\delta^{-1}.
$$
Hence, it suffices to consider $p$'s such that
$$
|\sigma_{1,2}(p)|\le x^{\varepsilon}\delta^{-1}.
$$

We deduce that
\begin{equation*}
\begin{split}
T_{II}=& \frac{\delta^2}{2\sqrt{d}}\cdot \bigg|\sum\limits_{
\substack{m\in \mathcal{R}\\ x^\mu<\mathcal{N}(m)<x^{\mu+\kappa}}}
\sum\limits_{\substack{n\in \mathcal{O}\setminus\{0\}\\ |\sigma_{1,2}(n)|\le (x/\mathcal{N}(m))^{1/2}}}
 a_{m} b_{n}\times\\ &  \sum\limits_{\substack{p\in \mathcal{O}\setminus\{0\}\\
 |\sigma_{1,2}(p)|\le x^{\varepsilon}\delta^{-1}}} E(p,mn)\mathcal{E}(pmn)\bigg|+O\left(x^{-100}\right).
\end{split}
\end{equation*}
Moreover, we divide the $m$-sum into $O(\log x)$ subsums
$$
\sum\limits_{K\le \mathcal{N}(m)\le 2K} 
$$ 
over dyadic intervals, getting
\begin{equation} \label{TKpre}
\begin{split}
T_{II}\ll & (\log x) \delta^2\sup\limits_{x^{\mu}\le K\le x^{\mu+\kappa}} |\Sigma_K|+O\left(x^{-100}\right),
\end{split}
\end{equation}
where
$$
\Sigma_K:=\sum\limits_{\substack{m\in \mathcal{R}\\K\le \mathcal{N}(m)\le 2K}} \sum\limits_{\substack{n\in \mathcal{O}\setminus\{0\}\\ |\sigma_{1,2}(n)|\le (x/K)^{1/2}}} a_{m} b_{n}\mathop{\sum}\limits_{\substack{p\in \mathcal{O}\setminus\{0\}\\|\sigma_{1,2}(p)|\le x^{\varepsilon}\delta^{-1}}}E(p,mn)\mathcal{E}(pmn)
$$
with $a_{m}:=0$ if $\mathcal{N}(m)>x^{\mu+\kappa}$.
Here we note, again looking at $G(mn)$, that the contribution of $n$'s with $(x/\mathcal{N}(m))^{1/2}<|\sigma_1(n)|\le (x/K)^{1/2}$ or 
$(x/\mathcal{N}(m))^{1/2}<|\sigma_2(n)|\le (x/K)^{1/2}$ is negligible. 

Now we use the definition of $f$ in \eqref{fdef} and expand the $\mathcal{C}$-th powers implicit in the definition of $E(p,k)$ in \eqref{tildeE}. We are led to a linear combination of sums of the form
$$
\Sigma_{K,j}:=\sum\limits_{\substack{m\in \mathcal{R}\\K\le \mathcal{N}(m)\le 2K}} \sum\limits_{\substack{n\in \mathcal{O}\setminus\{0\}\\ |\sigma_{1,2}(n)|\le (x/K)^{1/2}}} a_{m} b_{n}\mathop{\sum}\limits_{\substack{p\in \mathcal{O}\setminus\{0\}\\|\sigma_{1,2}(p)|\le x^{\varepsilon}\delta^{-1}}} E_{j}(p,mn)\mathcal{E}(pmn),
$$
where $j:=(j_1,j_2)\in \mathbb{N}^2$ and 
$$
E_j(p,k):= 
\exp\left(-\pi\cdot \frac{\sigma_1(pk)^2+\sigma_2(pk)^2}{4d}\cdot \frac{\delta^2}{N}\right)\cdot \exp\left(-\pi\cdot \frac{j_1\sigma_1(k)^2+j_2\sigma_2(k)^2}{N}\right)
$$
with $j_1$ and $j_2$ bounded by $2\mathcal{C}$. Hence, \eqref{TKpre} turns into
 \begin{equation} \label{TKturn}
\begin{split}
T_{II}\ll & (\log x) \delta^2\sup\limits_{x^{\mu}\le K\le x^{\mu+\kappa}} \sum\limits_{1\le j\le 2\mathcal{C}} |\Sigma_{K,j}|+O\left(x^{-100}\right),
\end{split}
\end{equation}
where the summation condition on $j$ means that $1\le j_1,j_2\le 2\mathcal{C}$.
In the following, we bound $|\Sigma_{K,j}|$. 

\subsection{Removing the exponential weights} \label{removal}
Whereas in the later treatment of the type I sums, the weight $E_j(p,mn)$ will be essential for performing Poisson summation in the smooth sum over $n$, it is convenient to remove it when
dealing with the type II sums. This will be done using inverse Mellin transform. Smooth weights of a more suitable shape will be re-introduced after applying Cauchy-Schwarz in section 6.4.

We begin by writing the Gaussian as an inverse Mellin transform in the form
$$
\exp\left(-y^2\right)=\frac{1}{2\pi i}\cdot \int\limits_{c-i\infty}^{c+i\infty} |y|^{-s} \cdot \frac{\Gamma(s/2)}{2} ds
$$
for all $y\in \mathbb{R}$, where $c$ is any positive number. This implies
$$
\exp\left(-\pi\cdot \frac{\sigma_1(pmn)^2}{4d}\cdot \frac{\delta^2}{N}\right)=
\frac{1}{4\pi i}\cdot \int\limits_{c-i\infty}^{c+i\infty} \left(\frac{\sqrt{\pi}\delta
|\sigma_1(pmn)|}{\sqrt{4dN}}\right)^{-s_1} \Gamma\left(\frac{s_1}{2}\right) ds_1,
$$
$$
\exp\left(-\pi\cdot \frac{\sigma_2(pmn)^2}{4d}\cdot \frac{\delta^2}{N}\right)=
\frac{1}{4\pi i}\cdot \int\limits_{c-i\infty}^{c+i\infty} \left(\frac{\sqrt{\pi}\delta
|\sigma_2(pmn)|}{\sqrt{4dN}}\right)^{-s_2} \Gamma\left(\frac{s_2}{2}\right) ds_2,
$$
$$
\exp\left(-\pi\cdot \frac{j_1\sigma_1(mn)^2}{N}\right)=
\frac{1}{4\pi i}\cdot \int\limits_{c-i\infty}^{c+i\infty} \left(\frac{\sqrt{j_1\pi}
|\sigma_1(mn)|}{\sqrt{N}}\right)^{-s_3} \Gamma\left(\frac{s_3}{2}\right) ds_3
$$
and 
$$
\exp\left(-\pi\cdot \frac{j_2\sigma_2(mn)^2}{N}\right)=
\frac{1}{4\pi i}\cdot \int\limits_{c-i\infty}^{c+i\infty} \left(\frac{\sqrt{j_2\pi}
|\sigma_2(mn)|}{\sqrt{N}}\right)^{-s_4} \Gamma\left(\frac{s_4}{2}\right) ds_4.
$$
Write ${\bf s}:=(s_1,s_2,s_3,s_4)$ and $d{\bf s}:=
ds_4ds_3ds_2ds_1$. Then it follows that
\begin{equation} \label{sigmaKs}
\Sigma_{K,j}=  \frac{1}{(4\pi i)^4}\cdot \int\limits_{c-i\infty}^{c+i\infty} 
\cdots \int\limits_{c-i\infty}^{c+i\infty} j_1^{-s_3/2}j_2^{-s_4/2} 
\Phi({\bf s})\prod\limits_{i=1}^4 \Gamma\left(\frac{s_i}{2}\right)
\Sigma_{K}({\bf s})d{\bf s},
\end{equation}
where 
\begin{equation*}
\Phi({\bf s}):=(N/\pi)^{(s_1+s_2+s_3+s_4)/2}(4d)^{(s_1+s_2)/2} \delta^{-(s_1+s_2)}
\end{equation*}
and 
$$
\Sigma_K({\bf s}):=\sum\limits_{\substack{m\in \mathcal{R}\\K\le \mathcal{N}(m)\le 2K}} \sum\limits_{\substack{n\in \mathcal{O}\setminus\{0\}\\ |\sigma_{1,2}(n)|\le (x/K)^{1/2}}} a_{m}({\bf s}) b_{n}({\bf s})\sum\limits_{\substack{p\in \mathcal{O}\setminus\{0\}\\|\sigma_{1,2}(p)|\le x^{\varepsilon}\delta^{-1}}} c_p(s_1,s_2)\mathcal{E}(pmn)
$$
with 
\begin{equation*} 
\begin{split}
a_m({\bf s}):=& a_m|\sigma_1(m)|^{-s_1-s_3}|\sigma_2(m)|^{-s_2-s_4},\\
b_n({\bf s}):=& b_m|\sigma_1(n)|^{-s_1-s_3}|\sigma_2(n)|^{-s_2-s_4},\\
c_p(s_1,s_2):=& |\sigma_1(p)|^{-s_1}|\sigma_2(p)|^{-s_2}.
\end{split}
\end{equation*}
We set
\begin{equation} \label{cset}
c:=\frac{1}{\log x}.
\end{equation}
Then, if 
$$
N^{-1}\le \delta\le 1,
$$
we have 
\begin{equation} \label{Phibound}
\Phi({\bf s})=O(1)
\end{equation}
and 
$$
a_m({\bf s})\ll |a_m|\le 1, \quad b_n({\bf s})\ll |b_n|\le d((n)), \quad c_p(s_1,s_2)=O(1)
$$
for all ${\bf s}$ with $\Re(s_i)=c$ for $i=1,...,4$ and $m,n,p$ in the relevant summation ranges. 

\subsection{Applying Cauchy-Schwarz} \label{CauSch}
Next we bound $\Sigma_K({\bf s})$. We first re-arrange summations and use the triangle inequality and 
the bounds $c_p(s_1,s_2)=O(1)$ and $a_m=O(1)$ to get
$$
\Sigma_K({\bf s})\ll \sum\limits_{\substack{p\in \mathcal{O}\setminus\{0\}\\|\sigma_{1,2}(p)|\le x^{\varepsilon}\delta^{-1}}}   \sum\limits_{\substack{m\in \mathcal{R}\\K\le \mathcal{N}(m)\le 2K}} \left|\sum\limits_{\substack{n\in \mathcal{O}\setminus\{0\}\\ |\sigma_{1,2}(n)|\le (x/K)^{1/2}}}b_{n}({\bf s}) \mathcal{E}(pmn)\right|.
$$
Now we apply the Cauchy-Schwarz inequality and the definition of $\mathcal{R}$ to get
\begin{equation*}
\left|\Sigma_K({\bf s})\right|^2\ll  x^{2\varepsilon} \delta^{-2}K  
\sum\limits_{\substack{p\in \mathcal{O}\setminus\{0\}\\|\sigma_{1,2}(p)|\le x^{\varepsilon}\delta^{-1}}}   \sum\limits_{\substack{m\in \mathcal{R}\\K\le \mathcal{N}(m)\le 2K}} \left|\sum\limits_{\substack{n\in \mathcal{O}\setminus\{0\}\\ |\sigma_{1,2}(n)|\le (x/K)^{1/2}}}b_{n}({\bf s}) \mathcal{E}(pmn)\right|^2.
\end{equation*}
Writing $k=pm$ and using the definition of $\mathcal{R}$, we deduce that
\begin{equation*}
\left|\Sigma_K({\bf s})\right|^2\ll  x^{2\varepsilon} \delta^{-2}K  
\sum\limits_{\substack{k\in \mathcal{O}\setminus\{0\}\\|\sigma_{1,2}(k)|\le cx^{\varepsilon}\delta^{-1}K^{1/2}}}   
\Bigg(\sum\limits_{\substack{p,m\in \mathcal{O}\\ pm=k\\ m\in \mathcal{R}}} 1\Bigg)
 \left|\sum\limits_{\substack{n\in \mathcal{O}\setminus\{0\}\\ |\sigma_{1,2}(n)|\le (x/K)^{1/2}}}b_{n}({\bf s}) \mathcal{E}(kn)\right|^2
\end{equation*}
for some constant $c>0$. Since the number of ideal divisors of $k$ is bounded by $O(\mathcal{N}(k)^{\varepsilon})$ and $m$ runs over mutually non-associate elements of $\mathcal{O}$, we have
$$
\sum\limits_{\substack{p,m\in \mathcal{O}\\ pm=k\\ m\in \mathcal{R}}} 1\ll x^{\varepsilon}.
$$
Furthermore, we introduce a smooth weight to extend the summation over $k$ to all integers, obtaining
\begin{equation*}
\left|\Sigma_K({\bf s})\right|^2\ll  x^{3\varepsilon} \delta^{-2}K  
\sum\limits_{k\in \mathcal{O}}  \exp\left(-\pi \cdot \frac{\sigma_1(k)^2+\sigma_2(k)^2}{x^{2\varepsilon}\delta^{-2}K}\right)
 \left|\sum\limits_{\substack{n\in \mathcal{O}\setminus\{0\}\\ |\sigma_{1,2}(n)|\le (x/K)^{1/2}}}b_{n}({\bf s}) \mathcal{E}(kn)\right|^2.
\end{equation*}
We expand the square, use the fact that $\overline{\mathcal{E}(l)}=\mathcal{E}(-l)$ and move in the summation over $k$ to deduce that
\begin{equation*}
\begin{split}
\left|\Sigma_K({\bf s})\right|^2\ll & x^{3\varepsilon} \delta^{-2}K  
\sum\limits_{\substack{n_1,n_2\in \mathcal{O}\setminus\{0\}\\ |\sigma_{1,2}(n_1)|\le (x/K)^{1/2}\\ |\sigma_{1,2}(n_2)|\le (x/K)^{1/2}}}b_{n_1}({\bf s})\overline{b_{n_2}({\bf s})}\times\\ & 
\sum\limits_{k\in \mathcal{O}}  \exp\left(-\pi \cdot \frac{\sigma_1(k)^2+\sigma_2(k)^2}{x^{2\varepsilon}\delta^{-2}K}\right)
 \mathcal{E}\left(k(n_1-n_2)\right).
 \end{split}
\end{equation*}
Using $b_{n_i}({\bf s})\ll d((n_i))\ll x^{\varepsilon}$, and writing $n=n_1-n_2$, it follows that
\begin{equation*}
\begin{split}
\left|\Sigma_K({\bf s})\right|^2\ll & x^{5\varepsilon} \delta^{-2}K  
\sum\limits_{\substack{n\in \mathcal{O}\\ |\sigma_{1,2}(n)|\le 2(x/K)^{1/2}}} 
\Big(\sum\limits_{\substack{n_1-n_2=n\\ |\sigma_{1,2}(n_1)|\le (x/K)^{1/2}\\ |\sigma_{1,2}(n_2)|\le (x/K)^{1/2}}}
1 \Big) \times\\ &
 \left| \sum\limits_{k\in \mathcal{O}}  \exp\left(-\pi \cdot \frac{\sigma_1(k)^2+\sigma_2(k)^2}{x^{2\varepsilon}\delta^{-2}K}\right)
 \mathcal{E}\left(kn\right)\right|.
 \end{split}
\end{equation*}
Clearly,
$$
\sum\limits_{\substack{n_1-n_2=n\\ |\sigma_{1,2}(n_1)|\le (x/K)^{1/2}\\ |\sigma_{1,2}(n_2)|\le (x/K)^{1/2}}} 1
\ll xK^{-1},
$$
giving
\begin{equation*}
\left|\Sigma_K({\bf s})\right|^2\ll  x^{1+5\varepsilon} \delta^{-2}  
\sum\limits_{\substack{n\in \mathcal{O}\\ |\sigma_{1,2}(n)|\le 2(x/K)^{1/2}}} 
\left|
\sum\limits_{k\in \mathcal{O}}  \exp\left(-\pi \cdot \frac{\sigma_1(k)^2+\sigma_2(k)^2}{x^{2\varepsilon}\delta^{-2}K}\right)
 \mathcal{E}\left(kn\right)\right|.
\end{equation*}

\subsection{Applying Poisson summation}
Now we use again the Poisson summation formula to transform the sum over 
$k$ above. For $(u,v)\in \mathbb{R}^2$ set 
$$
f(u,v):=\exp\left(-\pi \cdot \frac{2(u^2+v^2d)}{x^{2\varepsilon}\delta^{-2}K}\right) \cdot 
 e\left(\frac{(\sigma_2(n)x_2-\sigma_1(n)x_1)u-(\sigma_2(n)x_2+\sigma_1(n)x_1)v\sqrt{d}}{2\sqrt{d}}\right)
$$
so that
$$
\exp\left(-\pi \cdot \frac{\sigma_1(k)^2+\sigma_2(k)^2}{x^{2\varepsilon}\delta^{-2}K}\right)
 \mathcal{E}\left(kn\right)=f(u,v)
$$
if $k=u+v\sqrt{d}$ with $(u,v)\in \mathbb{Z}^2$. Clearly, $f\in L^1(\mathbb{R}^2)$. We calculate the Fourier transform of $f$ to be
\begin{equation*}
\begin{split}
& \hat{f}(\alpha,\beta)=\frac{1}{2\sqrt{d}}\cdot x^{2\varepsilon}\delta^{-2}K\times\\ &
\exp\left(-\frac{\pi}{2} \cdot x^{2\varepsilon}\delta^{-2}K\left(\left(\beta-\frac{\sigma_2(n)x_2-\sigma_1(n)x_1}{2\sqrt{d}}\right)^2+\left(\alpha-\frac{-(\sigma_2(n)x_2+\sigma_1(n)x_1)}{2}\right)^2\right)\right).
\end{split}
\end{equation*}
Obviously, this function is also in $L^1(\mathbb{R}^2)$. Hence, by Lemma \ref{Poisson}, we get
\begin{equation*}
\begin{split}
& \sum\limits_{k\in \mathcal{O}}  \exp\left(-\pi \cdot \frac{\sigma_1(k)^2+\sigma_2(k)^2}{x^{2\varepsilon}\delta^{-2}K}\right)\mathcal{E}\left(kn\right)=
\frac{1}{2\sqrt{d}}\cdot x^{2\varepsilon}\delta^{-2}K\times\\
&\sum\limits_{(\alpha,\beta)\in \mathbb{Z}^2} 
\exp\left(-\frac{\pi}{2} \cdot x^{2\varepsilon}\delta^{-2}K\left(\left(\beta-\frac{\sigma_2(n)x_2-\sigma_1(n)x_1}{2\sqrt{d}}\right)^2+\left(\alpha-\frac{-(\sigma_2(n)x_2+\sigma_1(n)x_1)}{2}\right)^2\right)\right),
\end{split}
\end{equation*}
which is negligibly small if 
$$
\left|\left| \frac{\sigma_2(n)x_2-\sigma_1(n)x_1}{2\sqrt{d}}\right|\right| > 
\delta K^{-1/2}
$$
or 
$$
\left|\left|\frac{\sigma_2(n)x_2+\sigma_1(n)x_1}{2} \right|\right| > 
\delta K^{-1/2}
$$
and bounded by 
$
O\left(x^{2\varepsilon}\delta^{-2}K\right)
$
otherwise. Hence, we obtain
\begin{equation} \label{precount}
\begin{split}
& \left|\Sigma_K({\bf s})\right|^2\ll  x^{1+7\varepsilon} \delta^{-4}K  \times\\ &
\sum\limits_{\substack{n\in \mathcal{O}\\ |\sigma_{1,2}(n)|
\le 2(x/K)^{1/2}}}\chi_J\left(\left|\left|\frac{\sigma_2(n)x_2-\sigma_1(n)x_1)}{2\sqrt{d}}\right|\right|\right) \cdot \chi_J\left( 
\left|\left|\frac{\sigma_2(n)x_2+\sigma_1(n)x_1}{2}\right|\right|\right),
\end{split}
\end{equation}
where $\chi_J$ is the characteristic function of the interval
$$
J:=\left[-\delta K^{-1/2},\delta K^{-1/2}\right].
$$

\section{Counting problem} \label{counting}
We are now down to a counting problem reminiscent of that appearing in the treatments of the same problem in the settings of rational or Gaussian integers. However, the present counting problem has a different flavor since it is a truly 2-dimensional problem. Indeed, we will see that new ingredients are required such as results about the approximation of roots of quadratic congruences. Our approach is similar to that in the settings of rational or Gaussian primes, namely to use Diophantine approximation to replace $x_1$ and $x_2$ by elements of $\mathbb{Q}(\sqrt{d})$. 
 
\subsection{Approximating $(x_1,x_2)$} \label{approstart}
Dirichlet's approximation theorem for $\mathbb{Q}(\sqrt{d})$ gives us, in a precise sense, a set of $\theta\in \mathbb{Q}(\sqrt{d})$ such that $x_1$ and $x_2$ have a good simultaneous approximation by $\sigma_1(\theta)$ and 
$\sigma_2(\theta)$, respectively. We will work out the details in section \ref{Dirichlet}. Assume for now we have an approximation 
satisfying
\begin{equation} \label{Diriapprox}
\left|x_i- \sigma_i(\theta)\right|\le \Delta \mbox{ for }
i=1,2,
\end{equation}
where $\theta\in \mathbb{Q}(\sqrt{d})$. Then it follows from \eqref{precount} that
\begin{equation*}
\begin{split}
& \left|\Sigma_K({\bf s})\right|^2\ll  x^{1+7\varepsilon} \delta^{-4}K  \times\\ &
\sum\limits_{\substack{n\in \mathcal{O}\\ |\sigma_{1,2}(n)|
\le 2(x/K)^{1/2}}}\chi_{\mathcal{J}}\left(\left|\left|\frac{\sigma_2(n\theta)-\sigma_1(n\theta)}{2\sqrt{d}}\right|\right|\right) \cdot \chi_{\mathcal{J}}\left( 
\left|\left|\frac{\sigma_2(n\theta)+\sigma_1(n\theta)}{2}\right|\right|\right),
\end{split}
\end{equation*}
where 
\begin{equation} \label{tJ}
\mathcal{J}:=[-\tilde{\Delta},\tilde{\Delta}]
\end{equation}
with 
\begin{equation} \label{tDeltadef}
\tilde{\Delta}:=\delta K^{-1/2}+2x^{1/2}K^{-1/2}\Delta.
\end{equation}
Writing
$$
n=\lambda+\mu \sqrt{d}, \quad \theta=\sigma+\tau\sqrt{d}
$$
with $\lambda,\mu\in \mathbb{Z}$ and $\sigma,\tau\in \mathbb{Q}$,
the above turns into
\begin{equation*}
\left|\Sigma_K({\bf s})\right|^2\ll  x^{1+7\varepsilon} \delta^{-4}K  
\sum\limits_{\substack{(\lambda,\mu)\in \mathbb{Z}^2\\ |\lambda|,|\mu|
\le 2(x/K)^{1/2}}}\chi_{\mathcal{J}}\left(\left|\left|\lambda\tau+\mu\sigma\right|\right|\right) \cdot \chi_{\mathcal{J}}\left( 
\left|\left| \lambda\sigma+\mu\tau d\right|\right|\right),
\end{equation*}
which may be rewritten as
\begin{equation*}
\left|\Sigma_K({\bf s})\right|^2\ll  x^{1+7\varepsilon} \delta^{-4}K  
\sum\limits_{\substack{(\lambda,\mu)\in \mathbb{Z}^2\\ |\lambda|,|\mu|
\le 2(x/K)^{1/2}}} \chi_{\mathcal{J}\times\mathcal{J}}\left(\left|\left|\begin{pmatrix} \sigma & \tau d\\ \tau & \sigma \end{pmatrix} \begin{pmatrix} \lambda \\ \mu 
\end{pmatrix} \right|\right|\right),
\end{equation*}
where we set
$$
\left|\left| \begin{pmatrix} x \\ y \end{pmatrix} \right|\right|:=
\begin{pmatrix} ||x|| \\ ||y|| \end{pmatrix}.
$$
Hence, if 
\begin{equation} \label{thetawrite}
\theta=\frac{a}{W}+\frac{b}{W}\sqrt{d} \quad \mbox{ with } a,b\in \mathbb{Z},\ W\in \mathbb{N},
\end{equation}
then 
\begin{equation*}
\left|\Sigma_K({\bf s})\right|^2\ll  x^{1+7\varepsilon} \delta^{-4}K  
\sum\limits_{\substack{(\lambda,\mu)\in \mathbb{Z}^2\\ |\lambda|,|\mu|
\le 2(x/K)^{1/2}}} \chi_{\mathcal{J}\times\mathcal{J}}\left(\left|\left|\begin{pmatrix} a/W & b d/W\\ b/W & a/W \end{pmatrix} \begin{pmatrix} \lambda \\ \mu 
\end{pmatrix} \right|\right|\right).
\end{equation*}
Finally, we may express the above using congruences modulo $W$ as 
\begin{equation} \label{simplewrite0}
\left|\Sigma_K({\bf s})\right|^2\ll  x^{1+7\varepsilon} \delta^{-4}K  
\mathop{\sum\limits_{\substack{(\lambda,\mu)\in \mathbb{Z}^2\\ |\lambda|,|\mu|
\le U}} \sum\limits_{\substack{(\alpha,\beta)\in \mathbb{Z}^2\\ |\alpha|,|\beta|
\le V}}}_{\mbox{\bf C}} 1,
\end{equation}
where ``{\bf C}'' stands for the congruence condition
\begin{align*}
    \begin{pmatrix}
   a & bd \\
   b & a
    \end{pmatrix}
   \begin{pmatrix}
   \lambda \\
   \mu
    \end{pmatrix}\equiv
    \begin{pmatrix}
    \alpha \\
    \beta
    \end{pmatrix}  \bmod{W} 
\end{align*} 
and
\begin{equation} \label{UVdef}
U:=2x^{1/2}K^{-1/2} \quad \mbox{ and } \quad 
V:=W\tilde{\Delta}.
\end{equation}

\subsection{A congruence relation between $a$ and $b$} \label{thetasetup}
In our application of the Dirichlet approximation theorem for $\mathbb{Q}(\sqrt{d})$ in section \ref{Dirichlet}, we shall write 
$\theta$ in the form
\begin{equation} \label{thetaDiri}
\theta=\frac{u+v\sqrt{d}}{f+g\sqrt{d}}
\end{equation}
where $u,v,f,g\in \mathbb{Z}$ and
\begin{equation} \label{copri}
\mbox{gcd}(u+v\sqrt{d},f+g\sqrt{d})\approx 1 \mbox{ in } \mathcal{O}.
\end{equation}
Further, we shall make the natural choice
$$
W:=\mathcal{N}(f+g\sqrt{d})=|f^2-g^2d|
$$
so that
$$
|a+b\sqrt{d}|=|(u+v\sqrt{d})(f-g\sqrt{d})|.
$$
It follows that 
$$
|a^2-b^2d|=\mathcal{N}(a+b\sqrt{d})=\mathcal{N}(u+v\sqrt{d})W
$$
and hence 
\begin{equation} \label{congrurel}
a^2-b^2d\equiv 0 \bmod{W}.
\end{equation}
This congruence relation will be crucial in what follows. 

\subsection{Counting solutions of systems of linear congruences}\label{countsol}
Recalling \eqref{simplewrite0}, we need to detect $\alpha,\beta,\lambda,\mu$ satisfying the system of congruences
\begin{equation} \label{8.1}
\begin{cases}
    a\lambda+bd\mu & \equiv \alpha \bmod{W},\\
    b\lambda+a\mu & \equiv \beta \bmod{W}.
\end{cases}
\end{equation}
Multiplying the first congruence with $b$ and the second with $a$ and then subtracting and using \eqref{congrurel}, we get
\begin{equation} \label{8.2}
    b \alpha  \equiv a \beta  \bmod{W}.
\end{equation}
We first count pairs $(\alpha,\beta)$ lying the relevant range for which 
\eqref{8.2} is satisfied. Then we fix $(\alpha,\beta)$ and count how many pairs $(\lambda,\mu)$ in the relevant range satisfy \eqref{8.1}. After fixing $(\alpha,\beta)$, assuming that  $(\lambda_0,\mu_0)$ is one particular solution, any other solution of \eqref{8.1} is of the
form $(\lambda_0+\tilde{\lambda},\mu_0+\tilde{\mu})$, where
 \begin{equation} \label{8.1'}
\begin{cases}
    a\tilde{\lambda}+bd\tilde{\mu} & \equiv 0 \bmod{W},\\
    b\tilde{\lambda}+a\tilde{\mu} & \equiv 0 \bmod{W}.
\end{cases}
\end{equation}
We throw away the first congruence and just count the number of pairs $(\tilde{\lambda},\tilde{\mu})$ satisfying
\begin{equation} \label{8.3}
    b \tilde{\lambda} +a\tilde{\mu} \equiv 0  \bmod{W},
\end{equation}
where $(\lambda_0+\tilde{\lambda},\mu_0+\tilde{\mu})$ lies in the relevant range, i.e.
$$
|\lambda_0+\tilde{\lambda}|,|\mu_0+\tilde{\mu}|\le U.
$$
Since 
$$
|\lambda_0|,|\mu_0|\le U,
$$
it suffices to count $(\tilde{\lambda},\tilde{\mu})$ satisfying \eqref{8.3} and
$$
|\tilde{\lambda}|,|\tilde{\mu}|\le 2U.
$$
So relabeling $\tilde{\lambda}$ and $\tilde{\mu}$ as $\lambda$ and $-\mu$, respectively, we are led to counting solutions $(\alpha,\beta,\lambda,\mu)$ of the system of independent congruences
\begin{equation} \label{8.4}
\begin{cases}
    b \alpha  & \equiv  a \beta  \bmod{W}, \\
    b \lambda & \equiv  a\mu  \bmod{W},
\end{cases}
\end{equation}
subject to the conditions 
$$
|\lambda|,|\mu|\le 2U \quad \mbox{ and } \quad |\alpha|,|\beta|\le V.
$$
These congruences are of the same shape, and therefore it suffices to count
the number $\Theta(X;a,b;W)$ of solutions $(A,B)$ with 
$$
|A|,|B|\le X
$$
of the single congruence
\begin{equation} \label{singlecon}
b B  \equiv  a A  \bmod{W}.
\end{equation}
In summary, we conclude from \eqref{simplewrite0} that
\begin{equation} \label{simplewrite}
\left|\Sigma_K({\bf s})\right|^2\ll  x^{1+7\varepsilon} \delta^{-4}K\cdot  
\Theta(2U;a,b;W)\cdot \Theta(V;a,b;W).
\end{equation}

\subsection{Detecting congruences using additive characters} \label{additive}
Next we bound $\Theta(X;a,b;W)$ under the condition that
\begin{equation} \label{abqcond}
a=Za', \quad b=Zb', \quad W=Z^2W', \quad 
\mbox{gcd}(b',W')=1
\end{equation}
or 
\begin{equation} \label{abqcond2}
a=Za', \quad b=Zb', \quad W=Z^2W', \quad \mbox{gcd}(a',W')=d
\end{equation}
for suitable $a',b',W',Z\in \mathbb{Z}$ with $W',Z>0$. In section \ref{Dirichlet}, we will prove that \eqref{abqcond} or \eqref{abqcond2} holds. 

We observe that under both \eqref{abqcond} and \eqref{abqcond2}, \eqref{congrurel} is equivalent to
\begin{equation} \label{congrurel'}
(a')^2\equiv (b')^2d \bmod{W'}
\end{equation} 
on taking out a factor of $Z^2$, and \eqref{singlecon} is equivalent to
\begin{equation}\label{singlecon'}
b'B\equiv a'A \bmod{W'}Z
\end{equation}
on taking out a factor of $Z$. Moreover, \eqref{singlecon'} implies the congruence
\begin{equation}\label{sc}
b'B\equiv a'A \bmod{W'}.
\end{equation}
We will use \eqref{sc} in place of \eqref{singlecon'} because we are not able to handle the extra factor $Z$ in \eqref{singlecon'} which does not occur in the modulus of the important congruence relation \eqref{congrurel'}. Further, under \eqref{abqcond}, if $\overline{b'}$ is a multiplicative inverse
of $b' \bmod{W'}$ (i.e., $b\overline{b'} \equiv 1 \bmod{W'}$), then \eqref{sc}
is equivalent to
\begin{equation}\label{sc'}
B\equiv a'\overline{b'}A \bmod{W'}.
\end{equation}

We will confine ourselves to treating the case when \eqref{abqcond} is satisfied because the other case is very similar and gives the same results. Indeed, in the case of \eqref{abqcond2}, 
we write $a''=a'/d$ and $W''=W'/d$ and reduce \eqref{congrurel'} further to
\begin{equation*} 
(a'')^2d\equiv (b')^2 \bmod{W''}.
\end{equation*}
Moreover, we deduce from \eqref{sc} that
\begin{equation*}
b'B\equiv a''Ad \bmod{W''}
\end{equation*}
and, noting gcd$(a'',W'')=1$, transform this into 
\begin{equation*}
Ad \equiv b'\overline{a''}B \bmod{W''},
\end{equation*}
where $\overline{a''}$ is a multiplicative inverse of $a''$ modulo $W''$.
The counting problem now takes the same shape as above, with the replacements
\begin{equation*}
\begin{split}
W' \rightarrow & W''\\
a' \rightarrow & b'\\
b' \rightarrow & a''\\
A \rightarrow & B\\
B \rightarrow & Ad.
\end{split}
\end{equation*}
An inspection of the method below shows that the occurrence of an extra factor of $d$ in the last line presents no problems. Thus, we assume \eqref{abqcond} to hold throughout the following. 

We detect the congruence \eqref{sc'} using additive characters, writing
\begin{equation} \label{addchar}
\begin{split}
& \Theta(X;a,b;W) \le \sum\limits_{\substack{|A|,|B|\le X\\ B \equiv a'\overline{b'}A \bmod{W'}}} 1 \\
= & \frac{1}{W'}\sum\limits_{|A|,|B|\le X} \sum\limits_{h=0}^{W'-1} 
e\left(\frac{h(B-a'\overline{b'}A)}{W'}\right)\\
= & \frac{1}{W'} \sum\limits_{h=0}^{W'-1} 
\left(\sum\limits_{|B|\le X} e\left(\frac{hB}{W'}\right)\right)\left(
\sum\limits_{|A|\le X} e\left(-\frac{ha'\overline{b'}A}{W'}\right)\right)\\
\ll & \frac{1}{W'} \sum\limits_{h=0}^{W'-1} \min\left\{X,\left|\left|\frac{h}{W'}\right|\right|^{-1}\right\}
\min\left\{X,\left|\left|\frac{ha'\overline{b'}}{W'}\right|\right|^{-1}\right\}\\
\ll & \frac{1}{W'} \sum\limits_{0\le h \le W'/2} 
\min\left\{X,\frac{W'}{h}\right\}
\min\left\{X,\left|\left|\frac{ha'\overline{b'}}{W'}\right|\right|^{-1}\right\}\\
\ll & \frac{X}{W'} \sum\limits_{0\le h\le W'/X} \min\left\{X,\left|\left|\frac{ha'\overline{b'}}{W'}\right|\right|^{-1}\right\} +\
\sum\limits_{W'/X< h\le W'/2} \frac{1}{h} \min\left\{X,\left|\left|\frac{ha'\overline{b'}}{W'}\right|\right|^{-1}\right\}\\
\ll & (\log 2W')\sup\limits_{\substack{H\in \mathbb{R}\\ W'/X\le H\le W'}} \frac{1}{H} \sum\limits_{0\le h\le H} \min\left\{X,\left|\left|\frac{ha'\overline{b'}}{W'}\right|\right|^{-1}\right\}
\end{split}
\end{equation}
if $X\ge 1$. Now we use the following lemma, which is a standard tool in this circle of
problems.

\begin{Lemma} \label{standardtool} Suppose that $X,H\ge 1$ and $\gamma\in \mathbb{R}$
satisfies
\begin{equation} \label{Diop}
\left|\gamma-\frac{u}{r}\right| \ll r^{-2}
\end{equation}
for some $u\in \mathbb{Z}$ and $r\in \mathbb{N}$ with {\rm gcd}$(u,r)=1$, 
then
\begin{equation} \label{standest}
\sum\limits_{0\le h\le H} \min\left\{X,||h\gamma||^{-1}\right\}
\ll \left(1+ \frac{H}{r}\right)\left(X+r\right)\log 2r,
\end{equation}
where the implied constant in \eqref{standest} depends only on that in \eqref{Diop}.
\end{Lemma}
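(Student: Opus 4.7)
My plan is to partition the summation range into $O(1+H/r)$ disjoint blocks of length at most $r$ and bound the contribution of each block separately, using the Diophantine hypothesis to control the distribution of $\{h\gamma\}$ within a block.

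Write $\gamma = u/r + \theta/r^2$, where $|\theta|$ is bounded by the implied constant in \eqref{Diop}. For $h$ in a block $\mathcal{B} = \{h_0, h_0+1, \ldots, h_0 + r - 1\}$, we have
\begin{equation*}
h\gamma \;=\; h_0\gamma + \frac{(h-h_0)u}{r} + \frac{(h-h_0)\theta}{r^2}.
\end{equation*}
Since $\gcd(u,r) = 1$, as $h-h_0$ runs over $\{0,1,\ldots,r-1\}$ the residues $(h-h_0)u \bmod r$ form a permutation of $\{0,1,\ldots,r-1\}$, so $\{(h-h_0)u/r\}$ takes each of the values $0/r, 1/r, \ldots, (r-1)/r$ exactly once. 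The perturbation $(h-h_0)\theta/r^2$ is uniformly $O(1/r)$ across the block. Hence, for any $\eta > 0$, the number of $h \in \mathcal{B}$ with $\|h\gamma\| \le \eta$ is $O(1 + \eta r)$, because the perturbed points lie within a circular neighbourhood of any fixed point of radius $\eta + O(1/r)$, which meets the shifted lattice $\{h_0\gamma + k/r \bmod 1 : 0 \le k < r\}$ in $O(1 + \eta r)$ elements.

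Splitting the minimum at the threshold $\|h\gamma\| = 1/X$, the contribution of a single block is
\begin{equation*}
\sum_{h \in \mathcal{B}} \min\{X, \|h\gamma\|^{-1}\} \;\ll\; X\cdot\bigl(1 + r/X\bigr) + \sum_{k=1}^{\lceil r/2\rceil} \frac{r}{k} \;\ll\; X + r\log 2r,
\end{equation*}
where the first summand handles $h$ with $\|h\gamma\| \le 1/X$ (at most $X$ each) and the second treats the remaining $h$ by grouping dyadically according to the value of $\|h\gamma\|$ and using the counting estimate above for each dyadic range. Summing the per-block bound over the $O(1+H/r)$ blocks and absorbing $X + r\log 2r$ into $(X+r)\log 2r$ yields the claimed estimate.

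The only delicate point is the counting of $h \in \mathcal{B}$ with $\|h\gamma\| \le \eta$: one must verify that the perturbation $(h-h_0)\theta/r^2$, which can be as large as a constant times $1/r$, does not destroy the separation of the points $\{(h-h_0)u/r\}$ into $r$ distinct near-equispaced residues. This is handled by enlarging $\eta$ by $O(1/r)$ as above, with the resulting constants absorbed into the implied constant of \eqref{standest}; this is the only place where the Diophantine hypothesis enters in a nontrivial way, and the rest of the argument is bookkeeping.
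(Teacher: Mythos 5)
Your proof is correct and takes essentially the same approach as the paper, which simply refers to Br\"udern's Lemma 6.4.4 and says the estimate is obtained ``by dividing the summation range into intervals of length $r$''; your block decomposition, the use of $\gcd(u,r)=1$ to equidistribute $\{(h-h_0)u/r\}$, the $O(1/r)$ perturbation control, and the resulting $X + r\log 2r$ per-block bound are precisely the content of that reference.
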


\begin{proof}
This can be proved in a similar way as \cite[Lemma 6.4.4]{Bru} by dividing the summation range into intervals of length $r$. 
\end{proof}

Applying Lemma \ref{standardtool} to the last line of \eqref{addchar}, we deduce that
\begin{equation} \label{Thetaest}
\Theta(X;a,b;W)\ll \left(\frac{X^2}{W'}+\frac{rX}{W'}+\frac{X}{r}+1\right)(\log 2r)(\log 2W'),
\end{equation} 
provided we have a Diophantine approximation of the form
\begin{equation} \label{crucialDio}
\left| \frac{a'\overline{b'}}{W'} -\frac{u}{r} \right| \ll r^{-2} \mbox{ with gcd}(u,r)=1.
\end{equation}
We shall prove in section \ref{roots} that this is the case for some $r\asymp \sqrt{W'}$, which is a crucial point in this article. Therefore, \eqref{Thetaest} 
turns into
\begin{equation} \label{Thetaestfinal}
\Theta(X;a,b;W)\ll \left(\frac{X^2}{W'}+1\right)\log^22W'
\ll \left(\frac{X^2Z^2}{W}+1\right)\log^2 W.
\end{equation} 

\subsection{Estimating the type II sum}
Plugging \eqref{Thetaestfinal} into \eqref{simplewrite} and recalling the definitions of $U$ and $V$ in \eqref{UVdef} and $\tilde{\Delta}$ in \eqref{tDeltadef}, we obtain the estimate
\begin{equation} \label{aftercount}
\begin{split}
\left|\Sigma_K({\bf s})\right|^2\ll & x^{1+8\varepsilon} \delta^{-4}K  
\left(xK^{-1}W^{-1}Z^2+1\right)\left(\left(\delta^2 +x\Delta^2\right)K^{-1}WZ^2+1\right)\\
= & x^{8\varepsilon} \delta^{-4}\left(\left(\delta^2 +x\Delta^2\right)\left(x^2K^{-1}Z^4+xWZ^2\right)+x^2W^{-1}Z^2+xK\right),
\end{split}
\end{equation}
provided that 
\begin{equation} \label{qcond}
W\ll x^{100}.
\end{equation}
Taking square-root on both sides of \eqref{aftercount}, plugging the resulting estimate into \eqref{sigmaKs}, recalling \eqref{cset} and \eqref{Phibound}, applying Stirling's formula to bound the Gamma factors, and integrating, we obtain
\begin{equation*}
\left|\Sigma_{K,j}\right|\ll 
x^{5\varepsilon} \delta^{-2}\left(\left(\delta+x^{1/2}\Delta\right)\left(xK^{-1/2}Z^2+x^{1/2}W^{1/2}Z\right)+xW^{-1/2}Z+x^{1/2}K^{1/2}\right).
\end{equation*}
Reversing the roles of $m$ and $n$ in the whole process, we get the same estimate with $x/K$ in place of $K$, i.e.,
\begin{equation*}
\left|\Sigma_{K,j}\right|\ll 
x^{5\varepsilon} \delta^{-2}\left(\left(\delta+x^{1/2}\Delta\right)\left(x^{1/2}K^{1/2}Z^2+x^{1/2}W^{1/2}Z\right)+xW^{-1/2}Z+xK^{-1/2}\right).
\end{equation*}
Taking the first estimate when $K\le x^{1/2}$ and the second one if $K\ge x^{1/2}$ and employing 
\eqref{TKturn}, we obtain
\begin{equation} \label{TKaftercount}
\begin{split}
T_{II}\ll & 
x^{6\varepsilon}\left(\left(\delta+x^{1/2}\Delta\right)\left(x^{1-\mu/2}Z^2+x^{(1+\mu+\kappa)/2}Z^2+x^{1/2}W^{1/2}Z\right)+xW^{-1/2}Z+x^{3/4}\right)
\end{split}
\end{equation}
if 
$$
\mu\le 1/2\le \mu+\kappa.
$$

\section{Roots of quadratic congruences} \label{roots}
Now we want to establish the important Diophantine approximation
\eqref{crucialDio}.  We recall that we have the congruence \eqref{congrurel'} 
which is equivalent to
\begin{equation} \label{congrurel2}
(a'\overline{b'})^2\equiv d \bmod{W'}.
\end{equation} 
Hence, it suffices to prove the following.

\begin{Lemma}\label{uslemma} Let $d>1$ be an integer which is not a perfect square. Then
there exist positive constants $c_1,c_2,c_3$ only depending on $d$ such that the following holds. If $Q$ is a positive integer and $x$ is a solution to the quadratic congruence
$$
x^2\equiv d\bmod{Q},
$$
then there exist $u,r\in \mathbb{Z}$ with {\rm gcd}$(u,r)=1$ such that
$$
c_1\sqrt{Q}\le |r|\le c_2\sqrt{Q}
$$
and 
$$
\left|\frac{x}{Q}-\frac{u}{r}\right| \le \frac{c_3}{r^2}.
$$
\end{Lemma}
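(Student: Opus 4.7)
The plan is to adapt Hooley's approach via the theory of indefinite binary quadratic forms of discriminant $4d$. Set $y := (x^2-d)/Q \in \mathbb{Z}$ and introduce
\[
F(X,Y) := QX^2 - 2xXY + yY^2,
\]
of discriminant $4x^2 - 4Qy = 4d$; since $d$ is not a square, $F$ is indefinite and does not represent $0$ nontrivially over the rationals. The key algebraic identity, obtained by completing the square and using $x^2 - d = Qy$, is
\[
(Q\alpha - x\gamma)^2 - d\gamma^2 = Q\,F(\alpha,\gamma).
\]
The strategy will be to produce coprime $(\alpha, \gamma) \in \mathbb{Z}^2$ with $|F(\alpha,\gamma)|$ bounded solely in terms of $d$ and with $|\gamma| \asymp \sqrt{Q}$; the identity then forces $(Q\alpha - x\gamma)^2 \le d\gamma^2 + O_d(Q) \ll_d Q$, and dividing by $Q|\gamma|$ gives $|x/Q - \alpha/\gamma| \ll_d 1/\gamma^2$.

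First I would apply Lagrange's reduction theory for indefinite forms to obtain $M_0 \in SL_2(\mathbb{Z})$ transforming $F$ to a reduced form, whence the first column $(\alpha_0, \gamma_0)$ of $M_0$ satisfies $A := F(\alpha_0, \gamma_0)$ with $|A| \ll_d 1$ and $A \ne 0$ (as $F$ does not represent $0$). The size of $\gamma_0$ is not directly controlled, so I would next use the proper automorphism group of $F$ in $SL_2(\mathbb{Z})$, which is infinite cyclic, generated by a hyperbolic matrix $T$ with eigenvalues $\pm\epsilon^{\pm 1}$ coming from the fundamental unit of $\mathbb{Z}[\sqrt d]$. Letting $(\alpha_k, \gamma_k)$ denote the first column of $M_0 T^k$ preserves both coprimality and the value $F(\alpha_k, \gamma_k) = A$. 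Factoring $F(X,Y) = Q(X - \omega_+ Y)(X - \omega_- Y)$ with $\omega_\pm := (x \pm \sqrt{d})/Q$ and passing to the eigencoordinates $\xi_k := \alpha_k - \omega_+ \gamma_k$, $\eta_k := \alpha_k - \omega_- \gamma_k$, one has $Q\xi_k\eta_k = A$ and $\gamma_k = Q(\eta_k - \xi_k)/(2\sqrt{d})$, while $T$ scales $\xi_k \mapsto \pm\epsilon\xi_k$ and $\eta_k \mapsto \pm\epsilon^{-1}\eta_k$. Hence $|\gamma_k|$ is strictly log-convex in $k$, with an integer minimum $\ge 1$ which is also $\ll_d \sqrt{Q}$ (attained near the balance point where $|\xi_k|\asymp|\eta_k|\asymp\sqrt{|A|/Q}$), and it grows geometrically by a factor $\asymp \epsilon$ on either side.

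Because the sequence $\{|\gamma_k|\}_{k\in\mathbb{Z}}$ is two-sided with geometric growth ratio $\asymp \epsilon$, some integer $k$ lands $|\gamma_k|$ in a multiplicative window $[c_1\sqrt{Q}, c_2\sqrt{Q}]$ with $c_1, c_2$ depending only on $d$. For such $k$, setting $u := \alpha_k$ and $r := \gamma_k$ yields coprime integers meeting the required bounds on $|r|$, and the identity combined with $|A| \ll_d 1$ concludes the proof. I expect the hardest part to be this scaling step — making it rigorous that the orbit under $T^{\mathbb{Z}}$ produces some $|\gamma_k|$ inside a narrow multiplicative window around $\sqrt{Q}$. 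It is precisely the lower bound $|r| \gg_d \sqrt{Q}$ — not the upper bound — that promotes a naive estimate $|x/Q - u/r| \ll 1/Q$ into the sharper Diophantine statement with $1/r^2$ denominator.
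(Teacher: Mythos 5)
Your argument is precisely Hooley's, and it is the same route the paper takes: attach to the root $x$ the indefinite binary quadratic form of discriminant $4d$, reduce it, and use the proper automorphism group (the Pell unit) to locate a representation with denominator $\asymp_d\sqrt Q$, which the completed-square identity then converts into a Diophantine approximation with error $\ll_d 1/r^2$. The only real difference is presentational: the paper cites Hooley's explicit fundamental-domain parametrization of primary representations, whereas you rederive that step by tracking the orbit of the first column of the reducing matrix in eigencoordinates.

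Two small touch-ups are needed in your rederivation, neither of which is a real gap. First, when the reduced form you land on has positive leading coefficient $A$, the eigencoordinates $\xi_0,\eta_0$ have the same sign, so $\gamma_k \propto \eta_0\epsilon^{-k}-\xi_0\epsilon^k$ vanishes at a real $k_0$ and $|\gamma_k|$ is $\sinh$-shaped near $k_0$, not log-convex; the conclusion $|\gamma_k|\asymp_d\sqrt Q$ is still correct, but it is obtained by taking the integer $k$ with $1\le|k-k_0|<2$ (where $|\gamma_k|=\tfrac{Q}{2\sqrt d}\sqrt{|A|/Q}\,\bigl|\epsilon^{|k-k_0|}-\epsilon^{-|k-k_0|}\bigr|\asymp_d\sqrt Q$ since $1\le|A|\ll_d 1$), rather than the nearest integer to the minimizer as in the $\cosh$ ($A<0$) case. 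Second, for $F(\alpha_k,\gamma_k)=A$ to be preserved you should let $T$ be an automorphism of the reduced form $F\circ M_0$ (and take the first column of $M_0T^k$), or equivalently take $T$ to be an automorphism of $F$ and use $T^kM_0$; as written, with $T$ an automorphism of $F$ and $M_0T^k$, the value need not be constant.
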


\subsection{Relation between quadratic congruences and quadratic forms}
Our approach follows Hooley's treatment of roots of quadratic congruences in \cite[section 6]{Hoo} and makes use of the theory of binary quadratic forms.  Recall that the discriminant of the quadratic form
$$
ax^2+2hxy+by^2
$$ 
is the quantity
$$
D=h^2-ab.
$$
If the congruence 
$$
x^2\equiv  d \bmod{Q}
$$ 
has a solution $\omega$, then the quadratic form 
\begin{equation} \label{ourform}
Qx^2+2\omega xy+\frac{\omega^2-d}{Q}y^2
\end{equation}
represents $Q$ for $(x,y)=(1,0)$ and has discriminant $d$. We shall turn it into a reduced form and then express $\omega/Q$ in a way which is sufficient to prove Lemma \ref{uslemma}. 

Any form equivalent to \eqref{ourform} is obtained via a change of variables
   $$\begin{pmatrix}
   r & \beta \\
   s & \alpha
    \end{pmatrix}
    \begin{pmatrix}
    x\\ y \end{pmatrix}
    = \begin{pmatrix}
    x'\\ y'
    \end{pmatrix},
    $$  
where the matrix above is in SL$_2(\mathbb{Z})$.
In particular, $(x,y)=(1,0)$ is taken to $(x',y')=(r,s)$ via this change of variables. Therefore, any form
\begin{equation} \label{eqform}
ax^2+2hxy+by^2
\end{equation}
equivalent to \eqref{ourform} represents $Q$ in the form
\begin{equation} \label{Qrep}
ar^2+2hrs+bs^2=Q,
\end{equation}
where $r$ and $s$ are relatively prime integers such that for suitable relatively prime integers
$\alpha$ and $\beta$, one has
\begin{equation} \label{deter}
r\alpha-s\beta=1
\end{equation}
and 
\begin{equation} \label{2forms}
Qx^2+2\omega xy+\frac{\omega^2-d}{Q}y^2=
a(rx+\beta y)^2+2h(rx+\beta y)(sx+\alpha y)+
b(sx+\alpha y)^2.
\end{equation}
Comparing the coefficients of $xy$ on both sides, we calculate that
$$
\omega=ar\beta+h(r\alpha+s\beta)+bs\alpha,
$$ 
and hence, using \eqref{deter} and \eqref{Qrep},
\begin{equation} \label{firstapp}
\begin{split}
    \frac{\omega}{Q}=&\frac{ar\beta+h(r\alpha+s\beta)+bs\alpha}{ar^2+2hrs+bs^2}\\
    = & \frac{ar^2\beta+hr^2\alpha+hrs\beta+brs\alpha}{r\left(ar^2+2hrs+bs^2\right)}\\
    =& \frac{ar^2\beta+hr(s\beta+1)+hrs\beta+bs(s\beta+1)}{r\left(ar^2+2hrs+bs^2\right)}\\
    =&\frac{\beta(ar^2+2hrs+bs^2)+hr+bs}{r(ar^2+2hrs+bs^2)}\\
    =&\frac{\beta}{r}+\frac{hr+bs}{r(ar^2+2hrs+bs^2)}\\
    =&\frac{\beta}{r}+\frac{hr+bs}{rQ},
    \end{split}
\end{equation} 
where gcd$(\beta,r)=1$. 

\subsection{Evaluating the approximation}
Thus we have established the approximation \eqref{firstapp} 
for the ratio of a root $\omega$ of the quadratic congruence $x^2\equiv d \bmod{Q}$ and its modulus $Q$.
To complete the proof of Lemma \ref{uslemma}, we now assume without loss of generality that \eqref{eqform} is reduced and establish that for {\it suitable} solutions $(r,s)$ of \eqref{Qrep}, we have
$$
|r|\asymp \sqrt{Q} \quad \mbox{ and } \quad \frac{hr+bs}{rQ}\ll \frac{1}{r^2},
$$
where the implied constants are only allowed to depend on $d$.
Since there are only finitely many reduced forms of discriminant $d$, we may treat $a,h,b$ like constants, and it suffices to establish that given $a,h,b$, we have
\begin{equation} \label{firstcase}
|r|\asymp \sqrt{Q} \quad \mbox{ and }\quad  |s|\ll |r|
\end{equation}
for suitable integers $r,s$ satisfying \eqref{Qrep}, where the implied constants in \eqref{firstcase} depend only on $a,b,h$. 

We use the following information given in \cite[page 109]{Hoo}. Without loss of generality, we may take $a$ to be positive and $b$ to be negative. Let 
$$
m:=\mbox{gcd}(a,h,b)
$$
and $(T,U)$ the least solution of the Pellian equation
$$
T^2-dU^2=m.
$$
Then there exists precisely one solution $(r,s)$ of \eqref{Qrep} such that
\begin{equation*}
r>0 \quad \mbox{ and } \quad 0<s\le \frac{aU}{T-hU}\cdot r.
\end{equation*}
Moreover, on the set
$$
S:=\left\{(x,y)\in \mathbb{R}: x>0, \ 0<y\le \frac{aU}{T-hU}\cdot x\right\},
$$
the form 
$$
ax^2+2hxy+by^2
$$
takes positive values only. With this information, we are able to finish off our proof easily.

Let 
$$
\eta_1:=\min\limits_{y\in [0,aU/(T-hU)]} (a+2hy+by^2)
$$
and 
$$
\eta_2:=\max\limits_{y\in [0,aU/(T-hU)]} (a+2hy+by^2).
$$
From the above, $0<\eta_1<\eta_2$, and 
$$
\eta_1 x^2 \le ax^2+2hxy+by^2 \le \eta_2 x^2
$$ 
whenever $(x,y)\in \mathcal{S}$. Hence, $r\asymp \sqrt{Q}$, and the claim \eqref{firstcase} is established. This completes the proof of Lemma \ref{uslemma}.

\section{Dirichlet approximation in $\mathbb{Q}(\sqrt{d})$} \label{Dirichlet}
\subsection{Diophantine approximation} \label{DA}
As announced in subsection \ref{approstart}, we now work out the details of our  simultaneous Diophantine approximation of  $x_1$ and $x_2$ using Dirichlet's approximation theorem in $\mathbb{Q}(\sqrt{d})$. By Corollary \ref{diriapptheoremweak},  there is an infinite increasing sequence of natural numbers $W$ such that 
\begin{equation} \label{dirapproxi}
\begin{split}
    \left|x_1-\frac{u+v\sqrt{d}}{f+g\sqrt{d}}\right|&\leq \frac{C}{W},\\
    \left|x_2-\frac{u-v\sqrt{d}}{f-g\sqrt{d}}\right|&\leq \frac{C}{W}, 
    \end{split}
\end{equation}
where $u,v,f,g$ are rational integers, $u+v\sqrt{d}$ and $f+g\sqrt{d}$ are relatively prime in $\mathcal{O}$, 
$$
W:=\mathcal{N}(f+g\sqrt{d}),
$$
and $C$ is a positive constant only depending on $d$. 
We further write
$$
\frac{u+v\sqrt{q}}{f+g\sqrt{d}}=\frac{a}{W}+\frac{b}{W}\sqrt{d}
$$
with $a,b\in \mathbb{Z}$. 

Here we note that applying the two-dimensional version of Dirichlet's approximation theorem in $\mathbb{Q}$ would not be sufficient for our purposes. This would give us approximations of the form
$$
\left| x_i-\frac{a_{i}}{W}\right|\le \frac{1}{W^{3/2}} \quad \mbox{for } i=1,2 
$$
with $a_1,a_2,W\in \mathbb{Z}$, 
but we have no information about the Diophantine properties of the pair $(a_1,a_2)$ modulo $W$, contrary to our situation above, where we have a relation between $a$ and $b$ modulo $W$ in form of the quadratic congruence \eqref{congrurel}, which turns out to be crucial in solving our counting problem. The key point is that this allows us, after reduction, to obtain the Diophantine approximation \eqref{crucialDio} of the fraction $a'\overline{b'}/W'$ by a fraction with denominator of size $\asymp \sqrt{W'}$, which is precisely what we need to get optimal bounds in the counting problem. Therefore, it is essential to approximate inside $\mathbb{Q}(\sqrt{d})$.

\subsection{Reducing $a$, $b$ and $W$}\label{redu}
Now we want to show that $a$, $b$ and $W$ allow for a reduction in the form given in \eqref{abqcond} or \eqref{abqcond2}.
We have
\begin{equation} \label{i1}
    a+b\sqrt{d}=(u+\sqrt{d}v)(f-\sqrt{d}g),
\end{equation}
\begin{equation*} 
    W=|(f+g\sqrt{d})(f-g\sqrt{d})|
\end{equation*}
and 
\begin{equation*} 
\mbox{gcd}(u+v\sqrt{d},f+g\sqrt{d})\approx 1 \approx \mbox{gcd}(u-v\sqrt{d},f-g\sqrt{d}).
\end{equation*}
Let
$$
Z:=\mbox{gcd}(f,g) 
$$ 
and 
$$
f':=\frac{f}{Z}, \quad g':=\frac{g}{Z},\quad W':=\frac{W}{Z^2}
$$
so that gcd$(f',g')=1$.
Then by \eqref{i1}, we also have $Z|a$ and $Z|b$. Let
$$
a':=\frac{a}{Z}, \quad b':=\frac{b}{Z}.
$$
It follows that 
\begin{equation} \label{i2}  
    a'+b'\sqrt{d}=(u+v\sqrt{d})(f'-g'\sqrt{d}),
\end{equation}
\begin{equation} \label{i3}
    W'=|(f'+g'\sqrt{d})(f'-g'\sqrt{d})|
\end{equation}
and 
\begin{equation} \label{i4}
\mbox{gcd}(u+v\sqrt{d},f'+g'\sqrt{d})\approx 1 \approx \mbox{gcd}(u-v\sqrt{d},f'-g'\sqrt{d}).
\end{equation}

Assume that 
\begin{equation} \label{f'g'}
\mbox{gcd}\left(f'+g'\sqrt{d},f'-g'\sqrt{d}\right)\approx t.
\end{equation}
Then, since
$$
(f'+g'\sqrt{d})+(f'-g'\sqrt{d})=2f' \quad \mbox{ and } \quad 
(f'+g'\sqrt{d})-(f'-g'\sqrt{d})=2g'\sqrt{d},
$$
it follows that $t|2f'$ and $t|2g'\sqrt{d}$. Since gcd$(f',g')=1$ (and hence gcd$(f',g')\approx 1$ in $\mathcal{O}$), this implies
$t|2\sqrt{d}$. But then $t|\sqrt{d}$ because otherwise $2|t$ 
and hence $2|f'$ and $2|g'$ by \eqref{f'g'} which contradicts the coprimality of 
$f'$ and $g'$. Now the only possibilities are $t=\pm \sqrt{d}$ and $t=\pm 1$. In the first case, $d|f'$ and necessarily gcd$(g',d)=1$ because otherwise gcd$(f',g')\not=1$. Using the equations
\begin{align} \label{aeq}
  2a' =  (u+v\sqrt{d})(f'-g'\sqrt{d})+(u-v\sqrt{d})(f'+g'\sqrt{d})
\end{align}
and
\begin{align} \label{beq}
   2b'\sqrt{d}= (u+v\sqrt{d})(f'-g'\sqrt{d})-(u-v\sqrt{d})(f'+g'\sqrt{d})
\end{align}
(which follow from \eqref{i2}) together with \eqref{i4}, we deduce that 
\begin{align*}
 \mbox{gcd}(2a',f'\pm g'\sqrt{d}) \approx t\approx \mbox{gcd}\left(2b'\sqrt{d},f'\pm g'\sqrt{d}\right).
\end{align*}
If $t=\pm 1$, then using \eqref{i3}, it follows that 
$\mbox{gcd}(b',W')=1$. 
If $t=\pm \sqrt{d}$, then $\sqrt{d}|2a'$ and hence $d|2a'$, from which we deduce that $\mbox{gcd}(2a',W')=d$. Since we assumed that $d\equiv 3 \bmod{4}$, it follows that $\mbox{gcd}(a',W')=d$. In summary, we have either \eqref{abqcond} or \eqref{abqcond2}. 

Now we have filled all gaps in section \ref{counting}.  Further, in view of \eqref{dirapproxi}, we may take 
$$
\theta=\frac{u+v\sqrt{d}}{f+g\sqrt{d}}
$$ 
and 
\begin{equation} \label{Deltachoice}
\Delta=\frac{C}{W}
\end{equation} 
in \eqref{Diriapprox}. Hence, our estimate \eqref{TKaftercount} for the type II sum turns into
\begin{equation} \label{TKaftercount1}
\begin{split}
T_{II}\ll &
x^{6\varepsilon}\left(\left(\delta+x^{1/2}W^{-1}\right)\left(x^{1-\mu/2}Z^2
+x^{(1+\mu+\kappa)/2}Z^2+x^{1/2}W^{1/2}Z\right)+xW^{-1/2}Z+x^{3/4}\right).
\end{split}
\end{equation}
The appearance of powers of $Z$ above presents a serious problem. To overcome this, we introduced the notions of good and bad $(x_1,x_2)$ in subsection \eqref{GB}. Recalling definition \ref{goodbad}, 
if $(x_1,x_2)$ is $\eta$-good, then \eqref{TKaftercount1} yields
\begin{equation} \label{TKaftercountgood}
\begin{split}
T_{II}\ll &
x^{6\varepsilon}\left(\left(\delta+x^{1/2}W^{-1}\right)\left(x^{1-\mu/2}W^{2\eta}+x^{(1+\mu+\kappa)/2}W^{2\eta}+x^{1/2}W^{1/2+\eta}\right)+xW^{\eta-1/2}+x^{3/4}\right).
\end{split}
\end{equation}
We note that we may assume, without loss of generality, that $\eta\le 1/2$ since
$Z^2\le W$. In the next subsection we shall show that, in the sense of the Lebesgue measure, for every $\eta\in (0,1/2]$, almost all $(x_1,x_2)$ are $\eta$-good, which implies that almost all $(x_1,x_2)$ are good. 

\subsection{Almost all $(x_1,x_2)$ are good} \label{almostall}
In this subsection, we provide a measure theoretical proof of the following result. Here we point out that $\sigma(\mathbb{K})$ has Lebesgue measure 0 in 
$\mathbb{R}^2$. 

\begin{Theorem} Given $\eta>0$, almost all $(x_1,x_2)\in \mathbb{R}^2\setminus \sigma(\mathbb{K})$ are $\eta$-good.
\end{Theorem}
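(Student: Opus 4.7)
The overall strategy is a divergent Borel--Cantelli argument. The first observation is that if the positive integer $W$ is square-free then, since $\gcd(f,g)^2$ divides $f^2-g^2d = \pm W$, one must have $\gcd(f,g) = 1$, so the restriction $\gcd(f,g) = O(W^{\eta})$ of Definition \ref{goodbad} holds automatically. It therefore suffices to prove that, for almost every $(x_1, x_2) \in \mathbb{R}^2\setminus\sigma(\mathbb{K})$, the approximation \eqref{theabove} with $\gcd(u+v\sqrt{d}, f+g\sqrt{d}) \approx 1$ holds for infinitely many square-free values of $W = \mathcal{N}(f+g\sqrt{d})$. Fix a bounded box $B \subset \mathbb{R}^2$ and let $A_W \subset B$ denote the set of $(x_1,x_2)\in B$ admitting such an approximation for this particular $W$.

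\textbf{Lower bound on $|A_W|$.} For each principal ideal $\mathfrak{q}$ with $\mathcal{N}(\mathfrak{q}) = W$, Lemma \ref{sigmasize} lets us pick a generator $q$ with $|\sigma_1(q)|, |\sigma_2(q)| \asymp \sqrt{W}$, so the shifted lattice $\Lambda(q;x_1,x_2)$ of \eqref{lattice} has covolume $2\sqrt{d}/W$ and minimal vector of length $\gg 1/\sqrt{W}$. As the approximation imposes a box of side $2/W \ll 1/\sqrt{W}$, boxes centred at distinct lattice points are pairwise disjoint, and a standard M\"obius inversion in $\mathcal{O}$ shows that a positive proportion of those lattice points come from $p$ coprime to $q$. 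Hence $|A_W| \gg |B|/W$ for every square-free $W$ that is the norm of some principal ideal. Since a positive proportion of rational primes split in $\mathbb{Q}(\sqrt{d})$, the set of square-free integers arising as such norms has positive density in $\mathbb{N}$, and therefore $\sum_W |A_W| = \infty$.

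\textbf{Quasi-independence and conclusion.} To pass from this divergence to $|B \setminus \limsup_W A_W| = 0$, one applies the quantitative divergence form of the Borel--Cantelli lemma, which requires a quasi-independence estimate of the shape
\[
\sum_{\substack{W,W' \le X\\ W \ne W'}} |A_W \cap A_{W'}| \ll \Bigl(\sum_{W \le X} |A_W|\Bigr)^{2}.
\]
This is the main obstacle: a point in $A_W \cap A_{W'}$ yields two distinct fractions $p/q, p'/q'$ both approximating $(x_1, x_2)$ to within $O(1/\min(W, W'))$ in each coordinate, and counting the resulting near-coincidences between the lattices $\Lambda(q;\cdot)$ and $\Lambda(q';\cdot)$ demands careful geometry-of-numbers input. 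Here Lemma \ref{supplement} plays its intended role: it controls sums over the associates of any generator in terms of the ideal norm, allowing one to perform the joint count purely ideal-theoretically and bound the resulting double sum by divisor-type arguments. Once the quasi-independence is in place, letting $B$ exhaust $\mathbb{R}^2$ shows that almost every $(x_1, x_2) \in \mathbb{R}^2 \setminus \sigma(\mathbb{K})$ is in fact good, and hence certainly $\eta$-good for the fixed $\eta>0$.
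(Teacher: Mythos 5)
The paper's proof is not a divergent Borel--Cantelli argument at all; it is a much softer measure-theoretic argument, and the approach you propose has a genuine gap at precisely the point you flag as ``the main obstacle.''

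The paper observes that the set of $\eta$-good points contains
\[
\bigcap_{N\ge M}\;\bigcup_{\substack{q:\,\mathcal{N}(q)>N}}\tilde B(q)\;\cap\;\Bigl(\;\bigcup_{\substack{q:\,\mathcal{N}(q)>M\\ Z(q)>\mathcal{N}(q)^\eta}}\tilde B(q)\Bigr)^{c},
\]
so it suffices to (a) note that the first intersection is full-measure in $P$, and (b) show the second union has measure $\ll M^{-\eta/2}$. Step (a) is free, because Theorem \ref{diriapptheoremstrong} (Dirichlet approximation in $\mathbb{Q}(\sqrt d)$) already guarantees, for \emph{every} $(x_1,x_2)\notin\sigma(\mathbb{K})$, infinitely many approximating $q$ with no restriction on $\gcd(f,g)$; no divergence estimate or quasi-independence needs to be established. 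Step (b) is a first (convergent) Borel--Cantelli estimate: pull out the factor $Z$, apply Lemma \ref{supplement} to collapse the sum over associates to a sum over ideals, and use countable subadditivity to get a convergent series. In other words, the paper reduces the whole problem to bounding the measure of a union of ``bad'' boxes, avoiding the second (divergent) Borel--Cantelli lemma entirely.

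Your proposal instead tries to re-prove the existence of infinitely many good approximations from scratch, restricting to square-free $W$ (a legitimate and clean observation, since $\gcd(f,g)^2\mid f^2-g^2d=\pm W$), and then invoking divergent Borel--Cantelli. But you never prove the quasi-independence bound
\[
\sum_{W,W'\le X,\ W\ne W'}|A_W\cap A_{W'}|\ll\Bigl(\sum_{W\le X}|A_W|\Bigr)^2,
\]
you only assert that Lemma \ref{supplement} ``plays its intended role.'' That lemma bounds a sum over associates by the ideal norm; it says nothing about correlations between approximation sets for two different denominators. Establishing quasi-independence for coprime Diophantine approximation is notoriously delicate even over $\mathbb{Q}$ (this is the hard content of Duffin--Schaeffer-type results), and nothing in the paper supplies it, nor do you. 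As written the argument is incomplete: the divergence of $\sum_W|A_W|$ does not by itself give positive (let alone full) measure of the limsup without an overlap estimate. The fix is to abandon the divergence approach and argue as the paper does: use Theorem \ref{diriapptheoremstrong} to get the unrestricted approximations for free, and only estimate the measure of the set where the approximations have large $\gcd(f,g)$.
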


\begin{proof} 
Throughout this proof, we set $\mathcal{X}:= \mathbb{R}^2\setminus \sigma(\mathbb{K})$,
$
\alpha:=(x_1,x_2)
$
and 
$
\overline{n}:=\sigma_2(n)
$
if $n\in \mathcal{O}$
for convenience. If
$
q\in \mathcal{O}\setminus \{0\},
$
we set
$$
B(q):=\bigcup\limits_{p\in \mathcal{O}} \left[\frac{p}{q}-\frac{1}{F_(q)}, \frac{p}{q}+\frac{1}{F(q)}\right] \times  
\left[\frac{\overline{p}}{\overline{q}}-\frac{1}{F(\overline{q})}, \frac{\overline{p}}{\overline{q}}+\frac{1}{F(\overline{q})}\right]\cap \mathcal{X},
$$
where 
\begin{equation} \label{Fqdef}
F(q):=|q|\sqrt{\tilde{\mathcal{N}}(q)}/C
\end{equation}
with $C$ being the positive constant from Theorem \ref{diriapptheoremstrong} and 
$$
\tilde{\mathcal{N}}(q):=\sigma_1(q)^2+\sigma_2(q)^2.
$$
Further, we set
$$
Z(q):=\mbox{gcd}(f,g).
$$
Noting that $F(q)\ge \mathcal{N}(q)/C$, we observe that the set of $\eta$-good $\alpha$'s contains the set
$$
G=\bigcap\limits_{\substack{N\in \mathbb{N}\\ N\ge M}} \bigcup\limits_{\substack{q\in \mathcal{O}\setminus \{0\}\\ Z(q)\le \mathcal{N}(q)^{\eta}\\ \mathcal{N}(q)>N}} B(q),
$$
where $M$ is any positive number. Now it suffices to prove that the complement 
$G^c$ of $G$ in $\mathcal{X}$ has Lebesgue measure 0.

We further note that it is sufficient to restrict ourselves to $\alpha$'s contained in the parallelogram $P$ with sides given by the vectors $(1,1)$ and 
$(\sqrt{d},-\sqrt{d})$. This is for the following reason: If $\alpha=(x_1,x_2)\in B(q)$, then also 
$\tilde{\alpha}= (x_1+a+b\sqrt{d},x_2+a-b\sqrt{d})\in B(q)$ for any $a,b\in \mathbb{Z}$. The set
$$
\{(a+b\sqrt{d},a-b\sqrt{d}) : (a,b)\in \mathbb{Z}^2\}
$$
may be written in the form 
$$
\{a(1,1)+b(\sqrt{d},-\sqrt{d}):(a,b)\in \mathbb{Z}^2\}=\Lambda,
$$
which forms a lattice in $\mathbb{R}^2$, generated by the vectors $(1,1)$ and $(\sqrt{d},-\sqrt{d})$. Hence, it suffices to show that $P\cap G^c$ has
Lebesgue measure 0.

We may write
$$
P\cap G^c=P\setminus \Bigg(\bigcap\limits_{\substack{N\in \mathbb{N}\\ N\ge M}} \bigcup\limits_{\substack{q\in \mathcal{O}\setminus \{0\}\\ Z(q)\le \mathcal{N}(q)^{\eta}\\ \mathcal{N}(q)>N}} \tilde{B}(q)\Bigg),
$$
where 
$$
\tilde{B}(q):=P\cap B(q).
$$
Next, we write 
\begin{equation*} \begin{split}
\bigcup\limits_{\substack{q\in \mathcal{O}\setminus \{0\}\\ Z(q)\le \mathcal{N}(q)^{\eta}\\ \mathcal{N}(q)>N}} \tilde{B}(q) \supseteq &
\Bigg( \bigcup\limits_{\substack{q\in \mathcal{O}\setminus \{0\}\\ \mathcal{N}(q)>N}} \tilde{B}(q)\Bigg)\setminus 
 \Bigg(\bigcup\limits_{\substack{q\in \mathcal{O}\setminus \{0\}\\ Z(q)> \mathcal{N}(q)^{\eta}\\ \mathcal{N}(q)>N}} \tilde{B}(q)\Bigg)\\
= & \Bigg(\bigcup\limits_{\substack{q\in \mathcal{O}\setminus \{0\}\\ \mathcal{N}(q)>N}} \tilde{B}(q)\Bigg)\cap
 \Bigg(\bigcup\limits_{\substack{q\in \mathcal{O}\setminus \{0\}\\ Z(q)>\mathcal{N}(q)^{\eta}\\ \mathcal{N}(q)>N}} \tilde{B}(q)\Bigg)^c,
\end{split}
\end{equation*}
where the complement in the last line is taken inside $P\cap \mathcal{X}$. It follows that
\begin{align*} 
\bigcap\limits_{\substack{N\in \mathbb{N}\\ N\ge M}}\bigcup\limits_{\substack{q\in \mathcal{O}\setminus \{0\}\\ Z(q)\le \mathcal{N}(q)^{\eta}\\ \mathcal{N}(q)>N}} \tilde{B}(q) \supseteq
\Bigg(\bigcap\limits_{\substack{N\in \mathbb{N}\\ N\ge M}}\bigcup\limits_{\substack{q\in \mathcal{O}\setminus \{0\}\\ \mathcal{N}(q)>N}} \tilde{B}(q)\Bigg)\cap
 \Bigg(\bigcap\limits_{\substack{N\in \mathbb{N}\\ N\ge M}}\Bigg(\bigcup\limits_{\substack{q\in \mathcal{O}\setminus \{0\}\\ Z(q)> \mathcal{N}(q)^{\eta}\\ \mathcal{N}(q)>N}} \tilde{B}(q)\Bigg)^c\Bigg).
\end{align*}
Using Dirichlet's approximation theorem for $\mathbb{Q}(\sqrt{d})$, Theorem \ref{diriapptheoremstrong}, together with the above restriction-to-$P$ argument, the set
$$
\bigcap\limits_{\substack{N\in \mathbb{N}\\ N\ge M}}\bigcup\limits_{\substack{q\in \mathcal{O}\setminus \{0\}\\ \mathcal{N}(q)>N}} \tilde{B}(q)
$$
agrees with $P$ upto a set of measure 0. It follows that
\begin{align*}
\mu\Bigg(P\setminus \Bigg(\bigcap\limits_{\substack{N\in \mathbb{N}\\ N\ge M}} \bigcup\limits_{\substack{q\in \mathcal{O}\setminus \{0\}\\ Z(q)\le \mathcal{N}(q)^{\eta}\\ \mathcal{N}(q)>N}} \tilde{B}(q)\Bigg)\Bigg)\le \mu\Bigg(\bigcup\limits_{\substack{N\in \mathbb{N}\\ N\ge M}}\bigcup\limits_{\substack{q\in \mathcal{O}\setminus \{0\}\\ Z(q)> \mathcal{N}(q)^{\eta}\\ \mathcal{N}(q)>N}} \tilde{B}(q)\Bigg) = \mu\Bigg(\bigcup\limits_{\substack{q\in \mathcal{O}\setminus \{0\}\\ Z(q)> \mathcal{N}(q)^{\eta}\\ \mathcal{N}(q)>M}}  \tilde{B}(q)\Bigg),
\end{align*}
where $\mu$ is the Lebesgue measure.
Further,
$$
\bigcup\limits_{\substack{q\in \mathcal{O}\setminus \{0\}\\ Z(q)> \mathcal{N}(q)^{\eta}\\ \mathcal{N}(q)>M}} \tilde{B}(q)=\bigcup\limits_{Z>M^{\eta}} 
\bigcup\limits_{\substack{q\in \mathcal{O}\setminus \{0\}\\ M<\mathcal{N}(q)<Z^{1/\eta}\\ Z(q)=Z}} \tilde{B}(q)
$$
and, setting $q'=q/Z$ and $q'=f'+g'\sqrt{d}$,
\begin{equation*}
\begin{split}
& \bigcup\limits_{\substack{q\in \mathcal{O}\setminus \{0\}\\ M<\mathcal{N}(q)<Z^{1/\eta}\\ Z(q)=Z}} \tilde{B}(q)\\
= &  \bigcup\limits_{\substack{q'\in \mathcal{O}\setminus \{0\}\\ M/Z^2<\mathcal{N}(q')<Z^{1/\eta-2}\\ (f',g')=1}}  \bigcup\limits_{p\in \mathcal{O}}\ P\cap\Bigg(\left[\frac{p}{Zq'}-\frac{1}{Z^2F(q')}, \frac{p}{Zq'}+\frac{1}{Z^2F(q')}\right] \times  \\ & \quad
\quad
\left[\frac{\overline{p}}{Z\overline{q'}}-\frac{1}{Z^2F(\overline{q'})}, \frac{\overline{p}}{Z\overline{q'}}+\frac{1}{Z^2F(\overline{q'})}\right]\Bigg)\\
\subseteq & \frac{1}{Z}  \bigcup\limits_{\substack{q\in \mathcal{O}\setminus \{0\}\\ \mathcal{N}(q)<Z^{1/\eta-2}}}\bigcup\limits_{p\in \mathcal{O}}\ (ZP)\cap \Bigg( \left[\frac{p}{q}-\frac{1}{ZF(q)}, \frac{p}{q}+\frac{1}{ZF(q)}\right] \times  
\left[\frac{\overline{p}}{\overline{q}}-\frac{1}{ZF(\overline{q})}, \frac{\overline{p}}{\overline{q}}+\frac{1}{ZF(\overline{q})}\right]\Bigg),
\end{split}
\end{equation*}
where in the last line, we have replaced $q'$ by $q$. 
Given $q$, the number of $p$'s such that $ZP$ has non-empty intersection with the rectangle
$$
R_Z(p,q)=\left[\frac{p}{q}-\frac{1}{ZF(q)}, \frac{p}{q}+\frac{1}{ZF(q)}\right] \times  
\left[\frac{\overline{p}}{\overline{q}}-\frac{1}{ZF(\overline{q})}, \frac{\overline{p}}{\overline{q}}+\frac{1}{ZF(\overline{q})}\right]
$$
is bounded by $O(Z^2 \mathcal{N}(q))$. To see this, note that
$$
F(q), F(\overline{q})\ge \mathcal{N}(q)
$$
for all $q$, and for fixed $q$, the points 
$$
\left(\frac{p}{q},\frac{\overline{p}}{\overline{q}}\right), \quad p\in \mathcal{O}
$$
form a lattice $\Lambda(q)$ in $\mathbb{R}^2$ which contains $\Lambda=\Lambda(1)$ as a sublattice of index $\mathcal{N}(q)$ in $\Lambda(q)$. Moreover,
$$
\mu(R_Z(p,q))=\frac{1}{Z^2F(q)F(\overline{q})}=\frac{C^2}{Z^2\mathcal{N}(q)\tilde{\mathcal{N}}(q)}.
$$

Combining everything and using countable subadditivity, we arrive at 
$$
\mu(P\cap G^c)\ll \sum\limits_{Z>M^{\eta}} \sum\limits_{\substack{q
\in \mathcal{O}\setminus \{0\}\\ \mathcal{N}(q)<Z^{1/\eta-2}}} \frac{1}{Z^2\tilde{\mathcal{N}}(q)}.
$$
By Lemma \ref{supplement}, for any given non-zero (principal) ideal $\mathfrak{a}$, we have
$$
\sum\limits_{\substack{q\in \mathcal{O}\\ (q)=\mathfrak{a}}} \frac{1}{\tilde{\mathcal{N}}(q)} \ll \frac{1}{\mathcal{N}(\mathfrak{a})}.
$$
Hence, 
$$
\mu(P\cap G^c)\ll \sum\limits_{Z>M^{\eta}} \frac{1}{Z^2} \sum\limits_{\substack{\mathfrak{a}\in I\setminus 0\\ \mathcal{N}(\mathfrak{a})<Z^{1/\eta-2}}} \frac{1}{\mathcal{N}(\mathfrak{a})}\ll_{\eta} \sum\limits_{Z>M^{\eta}} \frac{\log Z}{Z^2} \ll M^{-\eta/2}.
$$
Since this holds for all positive $M$, it follows that
$$
m(P\cap G^c)=0.
$$
This completes the proof.
\end{proof}

We deduce the following.

\begin{Corollary} Almost all $(x_1,x_2)\in \mathbb{R}^2$ are good.
\end{Corollary}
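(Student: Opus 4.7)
The plan is to deduce the corollary from the preceding theorem by a standard countable-intersection argument. Observe first that being good, i.e. $\eta$-good for every $\eta>0$, is equivalent to being $(1/n)$-good for every positive integer $n$, since the condition $\mathrm{gcd}(f,g)=O(W^{\eta})$ becomes strictly more restrictive as $\eta$ decreases; in particular, if $(x_1,x_2)$ is $(1/n)$-good for all $n\in\mathbb{N}$, then for any given $\eta>0$ we may pick $n$ with $1/n<\eta$ and conclude that $(x_1,x_2)$ is $\eta$-good. Hence, writing $\mathcal{G}$ for the set of good pairs and $\mathcal{G}_{\eta}$ for the set of $\eta$-good pairs in $\mathbb{R}^2\setminus\sigma(\mathbb{K})$, we have
\[
\mathcal{G}\;=\;\bigcap_{n=1}^{\infty}\mathcal{G}_{1/n}.
\]

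Now apply the preceding theorem with $\eta=1/n$: for each $n\in\mathbb{N}$, the complement of $\mathcal{G}_{1/n}$ inside $\mathbb{R}^2\setminus\sigma(\mathbb{K})$ has Lebesgue measure zero. A countable union of null sets is null, so the complement of $\mathcal{G}$ inside $\mathbb{R}^2\setminus\sigma(\mathbb{K})$ is also null. Finally, the set $\sigma(\mathbb{K})$ is a countable union of lines (the translates of $\{(a+b\sqrt{d},a-b\sqrt{d}):a,b\in\mathbb{Z}\}$ scaled by denominators) and hence has Lebesgue measure zero in $\mathbb{R}^2$. Combining, the complement of $\mathcal{G}$ in $\mathbb{R}^2$ is the union of two null sets, so $\mathcal{G}$ has full Lebesgue measure in $\mathbb{R}^2$.

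No serious obstacle arises here, since the work has already been done by the theorem; the only points worth checking carefully are the monotonicity of the $\eta$-good property in $\eta$ (immediate from Definition \ref{goodbad}) and the null-measure of $\sigma(\mathbb{K})$ (a union of countably many affine lines in $\mathbb{R}^2$). The entire argument is thus a one-line application of countable subadditivity of Lebesgue measure to the result that was just proved.
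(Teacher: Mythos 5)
Your proof is correct and follows essentially the same route as the paper's: write $\mathcal{G}=\bigcap_{n}\mathcal{G}_{1/n}$, apply countable subadditivity to the complements (each null by the preceding theorem), and observe that $\sigma(\mathbb{K})$ is null. You spell out the monotonicity of $\eta$-goodness in $\eta$, which the paper leaves implicit, but the argument is the same.
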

\begin{proof}
Let $\mathcal{G}_{\eta}$ be the set of $\eta$-good $(x_1,x_2)$ and $\mathcal{G}$ be the set of good $(x_1,x_2)$. Then 
$$
\mathcal{G}=\bigcap\limits_{n=1}^{\infty} \mathcal{G}_{1/n}.
$$
Hence,
$$
\mathcal{G}^c=\bigcup\limits_{n=1}^{\infty} \mathcal{G}_{1/n}^c
$$
and therefore
$$
\mu(\mathcal{G}^c)\le \sum\limits_{n=1}^{\infty} \mu(\mathcal{G}_{1/n}^c)=0,
$$
which completes the proof. 
\end{proof}

\subsection{Constructing good $(x_1,x_2)$} \label{consgood}
After we have seen that almost all $(x_1,x_2)$ are good, we supplement an explicit construction of good $(x_1,x_2)$ using continued fractions.
As in the previous subsection, if $a\in \mathcal{O}$, let $\overline{a}\in \mathcal{O}$ be the conjugate of $a$, i.e., $\overline{a}:=\sigma_2(a)$. We pick a sequence $(a_k)_{k\in \mathbb{N}\cup \{0\}}$ in $\mathcal{O}$ satisfying the condition
$$
\inf\limits_{k\in \mathbb{N}} a_k>0 \quad \mbox{ and }\quad  \inf\limits_{k\in \mathbb{N}} \overline{a_k}>0
$$
which we assume to hold throughout this subsection. In particular, $a_k,\overline{a_k}>0$ for all $k\ge 0$. 
Under this condition, the continued fractions
\begin{equation} \label{x1x2cont}
x_1=a_0+\cfrac{1}{a_1+\cfrac{1}{a_2+\cdots}} \quad \mbox{ and } \quad x_2=
\overline{a_0}+\cfrac{1}{\overline{a_1}+\cfrac{1}{\overline{a_2}+\cdots}}
\end{equation}
both converge. Our idea is to choose $(a_k)_{k\in \mathbb{N}\cup \{0\}}$ in such a way that the convergents of these continued fractions produce
sequences
$$
\left(\frac{u_k+v_k\sqrt{d}}{f_k+g_k\sqrt{d}},\frac{u_k-v_k\sqrt{d}}{f_k-g_k\sqrt{d}}\right)
$$ 
of good simultaneous approximations of $(x_1,x_2)$ with gcd$(f_k,g_k)=1$, thus establishing that $(x_1,x_2)$ is good.

Similarly as in the context of continued fractions for integers, we write  
$$
a_0+\cfrac{1}{a_1+\cfrac{1}{a_2+\cdots}}=:[a_0:a_1,a_2,...]
$$
and its convergents as
$$
a_0+\cfrac{1}{a_1+\cfrac{1}{a_2+\cfrac{}{\ddots \cfrac{1}{a_k}}}}=:[a_0:a_1,a_2,...,a_k]=\frac{p_k}{q_k}=\frac{u_k+v_k\sqrt{d}}{f_k+g_k\sqrt{d}} \quad
\mbox{ for } k\ge 0,
$$
where $p_k,q_k\in \mathcal{O}$ are coprime and $u_k,v_k,f_k,g_k\in \mathbb{Z}$. Here, $p_k$ and $q_k$ are unique only up to units. Clearly, we have
$$
\frac{\overline{p_k}}{\overline{q_k}}=[\overline{a_0}:\overline{a_1},\overline{a_2},...,\overline{a_k}].
$$ 
The results in the following lemma are standard in the context of continued fractions for rational integers and can be proved in our setting of algebraic integers
in $\mathcal{O}$ in an analog way. For proofs in the classical setup, we refer the reader to \cite{Khin}. 

\begin{Lemma} \label{contfract}
{\rm (i)} For $k\geq 2$, assume that
$$
\frac{p_l}{q_l}=[a_0:a_1,a_2,...,a_l] \quad \mbox{ if } l=k-2,k-1.
$$
Then
$$
\frac{p_k}{q_k}=[a_0:a_1,a_2,...,a_k]
$$
for
\begin{equation} \label{recur}
\begin{split}
p_k= & a_kp_{k-1}+p_{k-2},\\
q_k= & a_kq_{k-1}+q_{k-2}.
\end{split}
\end{equation}
{\rm (ii)} If the sequences $(p_k)_{k\in \mathbb{N}}$ and $(q_k)_{k\in \mathbb{N}}$ satisfy the recursive relation \eqref{recur} for all $k\ge 2$ and, without loss of generality, 
$$
p_0=a_0, \quad q_0=1, \quad p_1=a_0a_1+1, \quad q_1=a_1, 
$$
then we have $q_k>0$ for all $k\ge 0$, 
\begin{equation*}
q_kp_{k-1}-p_kq_{k-1}=(-1)^k
\end{equation*}
for all $k\ge 1$ and, consequently,
$$
\Bigg|x_1-\frac{p_{k-1}}{q_{k-1}}\Bigg|\le \frac{1}{q_kq_{k-1}} \quad \mbox{ and } \quad \Bigg|x_2-\frac{\overline{p_{k-1}}}{\overline{q_{k-1}}}\Bigg|\le \frac{1}{\overline{q_kq_{k-1}}}
$$
for all $k\ge 1$.
\end{Lemma}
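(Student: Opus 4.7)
The plan is to mirror the classical arguments from Khinchin's book, replacing rational integers by algebraic integers in $\mathcal{O}$ throughout and checking that nothing breaks since the recursions only use the ring structure of $\mathcal{O}$ and positivity of the partial quotients $a_k$ (which is guaranteed by the standing assumption $\inf a_k>0$, $\inf\overline{a_k}>0$).

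For part (i), I would proceed by induction on $k\ge 2$. The base case $k=2$ is a direct computation from the definition of $[a_0;a_1,a_2]$. For the inductive step, I would use the standard identity
\[
[a_0;a_1,\ldots,a_{k-1},a_k] = [a_0;a_1,\ldots,a_{k-2},a_{k-1}+1/a_k],
\]
valid in any field containing the $a_j$'s. Applying the inductive hypothesis to the continued fraction on the right (which has length $k-1$), with $a_{k-1}$ replaced by $a_{k-1}+1/a_k$, produces a quotient $P/Q$ with
\[
P = (a_{k-1}+1/a_k)p_{k-2}+p_{k-3},\qquad Q = (a_{k-1}+1/a_k)q_{k-2}+q_{k-3}.
\]
Multiplying numerator and denominator by $a_k$ and using the relations $p_{k-1}=a_{k-1}p_{k-2}+p_{k-3}$ and $q_{k-1}=a_{k-1}q_{k-2}+q_{k-3}$ from the inductive hypothesis gives exactly $P/Q=(a_kp_{k-1}+p_{k-2})/(a_kq_{k-1}+q_{k-2})$, as desired.

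For part (ii), positivity of $q_k$ follows immediately by induction: $q_0=1>0$, $q_1=a_1>0$ by hypothesis, and the recursion $q_k=a_kq_{k-1}+q_{k-2}$ preserves positivity since $a_k>0$. The determinant identity $q_kp_{k-1}-p_kq_{k-1}=(-1)^k$ is also verified by induction. For $k=1$ one checks $q_1p_0-p_1q_0 = a_1a_0-(a_0a_1+1)=-1$ directly; for the inductive step, substituting the recursion yields
\[
q_kp_{k-1}-p_kq_{k-1} = (a_kq_{k-1}+q_{k-2})p_{k-1}-(a_kp_{k-1}+p_{k-2})q_{k-1} = -(q_{k-1}p_{k-2}-p_{k-1}q_{k-2}),
\]
and the sign flip gives the factor $(-1)^k$.

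The approximation bounds follow by the usual telescoping argument. Dividing the determinant identity by $q_kq_{k-1}$ gives
\[
\frac{p_k}{q_k}-\frac{p_{k-1}}{q_{k-1}} = \frac{(-1)^k}{q_kq_{k-1}}.
\]
Summing from $k$ onwards (which converges because the hypothesis $\inf a_k>0$ together with the recursion forces $q_k\to\infty$ at least geometrically, so the series is absolutely convergent and the signs alternate), one obtains a telescoping series for $x_1-p_{k-1}/q_{k-1}$ whose absolute value is bounded by its first term $1/(q_kq_{k-1})$, since the tail is an alternating series with decreasing terms. The same reasoning applied to the conjugate sequence yields the bound for $x_2$. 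The main non-routine point is to verify that $q_k\to\infty$ (and likewise $\overline{q_k}\to\infty$); this is where the positivity assumption on $(a_k)$ and $(\overline{a_k})$ is essential, since otherwise the recursion could fail to produce a growing sequence and both the convergence of the continued fractions in \eqref{x1x2cont} and the telescoping bound would be in jeopardy.
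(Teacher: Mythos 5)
Your proof is correct and follows the standard continued-fraction arguments from Khinchin's book, transplanted verbatim into $\mathcal{O}$ with the positivity of $a_k$ and $\overline{a_k}$ supplying what positivity of the partial quotients does classically; this is exactly what the paper intends, since it states the lemma without proof, remarks that the proofs are ``standard in the context of continued fractions for rational integers and can be proved in our setting of algebraic integers in $\mathcal{O}$ in an analog way,'' and refers the reader to \cite{Khin}. You also correctly identify that the ``consequently'' step needs $q_k,\overline{q_k}\to\infty$, which follows from $\inf a_k>0$, $\inf\overline{a_k}>0$ via $q_{j+2}\ge(1+c^2)q_j$ for $c=\inf a_k$, though the paper takes this as already established when it asserts the convergence of \eqref{x1x2cont}.
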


Now we begin with our recursive construction. In each step, we have to ensure that gcd$(f_k,g_k)=1$, and we also need a strong enough growth of $q_k$ and $\overline{q_k}$ to get sufficiently strong approximations. For the start, we take 
$$
a_0:=1 \quad \mbox{ and } \quad a_1:=f_1+g_1\sqrt{d} \mbox{ with } f_1>dg_1>0 \mbox{ and gcd}(f_1,g_1)=1.
$$
Then, as in Lemma \ref{contfract}(i), we may take 
$$
p_0=1, \ q_0=f_0+g_0\sqrt{d}=1, \ p_1=f_1+1+g_1\sqrt{d}, \ q_1=f_1+g_1\sqrt{d}.
$$
We check that for $k=2$, the following conditions hold. 
\begin{equation} \label{contconds}
\begin{split}
\mbox{gcd}(f_{k-2},g_{k-2})=&1,\\
\mbox{gcd}(f_{k-1},g_{k-1})=&1,\\
\mbox{gcd}(f_{k-1},f_{k-2})=&1,\\ 
f_{k-2}> dg_{k-2} > & 0,\\
f_{k-1}> dg_{k-1} > &0,\\
f_{k-1}> & f_{k-2}^2.
\end{split}
\end{equation}
Assume this holds for $k\ge 2$. We shall construct $a_k,p_k,q_k$ in such a way that the conditions in \eqref{contconds} hold with $k+1$ in place of $k$, i.e.,
 \begin{equation} \label{contconds+1}
\begin{split}
\mbox{gcd}(f_{k-1},g_{k-1})=&1,\\
\mbox{gcd}(f_{k},g_{k})=&1,\\
\mbox{gcd}(f_{k},f_{k-1})=&1,\\ 
f_{k-1}>dg_{k-1}>& 0,\\
f_{k}>dg_{k}> & 0,\\
f_{k}> & f_{k-1}^2.
\end{split}
\end{equation}

In the following, we write
$$
a_k=s_k+t_k\sqrt{d}, \quad \mbox{ where } s_k,t_k\in \mathbb{Z}.
$$
According to Lemma \ref{contfract}(i), we may then take
\begin{align*}
q_k=&a_kq_{k-1}+q_{k-2}\\
     =&(s_k + t_k \sqrt{d})(f_{k-1}+g_{k-1}\sqrt{d})+(f_{k-2}+g_{k-2}\sqrt{d})\\
     =&(s_k f_{k-1}+t_k g_{k-1} d + f_{k-2}) +(t_kf_{k-1}+s_kg_{k-1}+g_{k-2})\sqrt{d}
\end{align*}
and hence 
\begin{equation*}
\begin{split}
f_k=&s_k f_{k-1}+t_k g_{k-1} d + f_{k-2},\\
g_k=&t_k f_{k-1}+s_kg_{k-1}+g_{k-2}.
\end{split}
\end{equation*}
If $s_k,t_k>0$, which we want to assume from now on, then it follows from $f_{k-2},g_{k-1}>0$ that 
$f_k>f_{k-1}$. So if $f_k$ is prime, then gcd$(f_k,f_{k-1})=1$. If, in addition, $f_k>dg_k$, which is the case 
if
\begin{equation} \label{>condi}
s_k(f_{k-1}-dg_{k-1})+(f_{k-2}-dg_{k-2})>d (f_{k-1}-g_{k-1})t_k,
\end{equation}
then gcd$(f_k,g_k)=1$. So to establish \eqref{contconds+1}, all we need is to
find $s_k$ and $t_k$ such that \eqref{>condi} holds and $f_k$ is a prime satisfying $f_k>f_{k-1}^2$.

Since gcd$(f_{k-1},f_{k-2})=1$, Dirichlet's theorem on the infinitude of primes in arithmetic progressions ensures the existence of $\tilde{s}_k>0$ such that
$\tilde{s}_k f_{k-1} + f_{k-2}$ is prime and $\tilde{s}_k f_{k-1} + f_{k-2}>dg_{k-1}$. So gcd$(\tilde{s}_k f_{k-1} + f_{k-2},dg_{k-1})=1$, and by the same theorem we can choose $t_k$ 
in such a way that $\tilde{s}_k f_{k-1}+t_k g_{k-1} d + f_{k-2}$ is prime. The pair $(s_k,t_k)=(\tilde{s}_k,t_k)$ may not satisfy condition \eqref{>condi}. To enforce this situation, we observe that since $\tilde{s}_k f_{k-1}+t_k g_{k-1} d + f_{k-2}$ is prime, we have gcd$(\tilde{s}_kf_{k-1},t_k g_{k-1} d + f_{k-2})=1$, and so again by Dirichlet's above-mentioned theorem, we can find $\lambda>0$ such that $\lambda \tilde{s}_k f_{k-1}+t_k g_{k-1} d + f_{k-2}$ is a prime greater than $f_{k-1}^2$ and  
$$
\lambda \tilde{s}_k(f_{k-1}-dg_{k-1})+(f_{k-2}-dg_{k-2})>d (f_{k-1}-g_{k-1})t_k.
$$
Here we recall that $f_{k-1}>dg_{k-1}$ and $f_{k-2}>dg_{k-2}$ by \eqref{contconds}.
Now choosing $s_k:=\lambda \tilde{s}_k$, the condition \eqref{>condi} is satisfied and $f_k$ is a prime greater than $f_{k-1}^2$. Hence, the conditions in \eqref{contconds+1} are established. 

We aim to show that under this construction, the pair $(x_1,x_2)$ given by \eqref{x1x2cont} is good. Since the construction gives $(f_k,g_k)=1$
for all $k$, it suffices to show that it satisfies \eqref{dirapproxi} for all $(p,q)=(p_k,q_k)$ with $k$ large enough, and that $\mathcal{N}(q_n)\rightarrow \infty$
as $n\rightarrow \infty$.  To this end, we observe that the condition $f_k\ge \max\{dg_k,f_{k-1}^2\}$ for all $k\ge 1$ implies
$$
q_k^2\asymp \mathcal{N}(q_k)\asymp \overline{q_k}^2
$$
and 
$$
q_k\gg q_{k-1}^2 \quad \mbox{ and } \quad \overline{q_{k}}\gg \overline{q_{k-1}}^2
$$
for all $k\ge 1$. 
Now the above claim follows from Lemma \ref{contfract}(ii), which establishes that $(x_1,x_2)$ is good. 

\section{Bounding the type I sum} \label{typeI}
\subsection{Initial transformations}
We are left with bounding the type I sum $T_{I}$ arising from our application of Harman's sieve in section \ref{Harmansieve}. Proceeding precisely as in the initial treatment of the type II sum $T_{II}$ in subsections \ref{It} and \ref{Cs}, but leaving the $n$-summation uncut, we arrive at
\begin{equation} \label{T1arrive}
\begin{split}
T_{I}\ll & (\log x) \delta^2\sup\limits_{1/2\le K\le M} \sum\limits_{1\le j\le 2\mathcal{C}} |\Sigma_{K,j}'|+O\left(x^{-100}\right),
\end{split}
\end{equation}
where
$$
\Sigma_{K,j}':=\sum\limits_{\substack{m\in \mathcal{R}\\K\le \mathcal{N}(m)\le 2K}} \sum\limits_{n\in \mathcal{O}\setminus \{0\}} a_{m} \mathop{\sum}\limits_{\substack{p\in \mathcal{O}\setminus\{0\}\\|\sigma_{1,2}(p)|\le x^{\varepsilon}\delta^{-1}}} E_j(p,mn)\mathcal{E}(pmn)
$$
with 
\begin{equation*}
\mathcal{E}(l):=e\left(\frac{\sigma_2(l)x_2-\sigma_1(l)x_1}{2\sqrt{d}}\right)
\end{equation*}
and
$$
E_j(p,k):=  
\exp\left(-\pi\cdot \frac{\sigma_1(pk)^2+\sigma_2(pk)^2}{4d}\cdot \frac{\delta^2}{N}\right)\cdot \exp\left(-\pi\cdot \frac{j_1\sigma_1(k)^2+j_2\sigma_2(k)^2}{N}\right).
$$ 
In the following, we bound $\Sigma_{K,j}'$. Unlike in subsection \ref{removal}, we here don't remove the weight functions but rather make use of them in directly applying the Poisson summation formula
to the {\it smooth} sum over $n$, which will be carried out in the next subsection. We still pull in the sum over $n$ and use $|a_m|\le 1$ to obtain 
\begin{equation} \label{SigmaK'}
\Sigma_{K,j}'\le \sum\limits_{\substack{m\in \mathcal{R}\\K\le \mathcal{N}(m)\le 2K}}   \sum\limits_{\substack{p\in \mathcal{O}\setminus\{0\}\\|\sigma_{1,2}(p)|\le x^{\varepsilon}\delta^{-1}}} \left|\sum\limits_{n\in \mathcal{O}\setminus \{0\}} E_j(p,mn)\mathcal{E}(pmn)\right|.
\end{equation}

\subsection{Applying Poisson summation}
Now we use Poisson summation formula to transform the sum over 
$n$ above. For $(u,v)\in \mathbb{R}^2$ set 
\begin{equation*}
\begin{split}
& f(u,v):= e\left(-\frac{\sigma_1(pm)x_1}{2\sqrt{d}}\cdot (u+v\sqrt{d})+\frac{\sigma_2(pm)x_2}{2\sqrt{d}}\cdot (u-v\sqrt{d})\right)\times\\ & \exp\left(-\pi \cdot \left(\left(\frac{\sigma_1(pm)^2\delta^2}{4dN}+\frac{j_1\sigma_1(m)^2}{N}\right)(u+v\sqrt{d})^2+
\left(\frac{\sigma_2(pm)^2\delta^2}{4dN}+\frac{j_2\sigma_2(m)^2}{N}\right)(u-v\sqrt{d})^2\right)\right) 
\end{split}
\end{equation*}
so that
$$
E_j(p,mn)\mathcal{E}(pmn)=f(u,v)
$$
if $n=u+v\sqrt{d}$ with $(u,v)\in \mathbb{Z}^2$. Clearly, $f\in L^1(\mathbb{R}^2)$. Making the linear change of variables
\begin{equation*}
\begin{split}
u+v\sqrt{d}= & \sigma,\\
u-v\sqrt{d} = & \tau,
\end{split}
\end{equation*}
we calculate the Fourier transform of $f$ to be 
\begin{equation*}
\begin{split}
\hat{f}(\alpha,\beta)= & 2\sqrt{d}\left(\sigma_1(pm)^2\delta^2/N+4dj_1\sigma_1(m)^2/N\right)^{-1/2}\left(\sigma_2(pm)^2\delta^2/N+4dj_2\sigma_2(m)^2/N\right)^{-1/2}\times\\ &
 \exp\left(-\pi \cdot \left(\frac{\left(\beta+\alpha\sqrt{d}+\sigma_1(pm)x_1\right)^2}{\sigma_1(pm)^2\delta^2/N+4dj_1\sigma_1(m)^2/N}+\frac{\left(-\beta+\alpha\sqrt{d}-
 \sigma_2(pm)x_2\right)^2}{\sigma_2(pm)^2\delta^2/N+4dj_2\sigma_2(m)^2/N}\right)\right)\\
 \ll & \frac{N}{K} \cdot \exp\left(-\pi \cdot \frac{N}{Kx^{3\varepsilon}}\cdot \left(\left(\beta+\alpha\sqrt{d}+\sigma_1(pm)x_1\right)^2+\left(-\beta+\alpha\sqrt{d}-\sigma_2(pm)x_2\right)^2\right)\right)
  \end{split}
\end{equation*}
if $m$ and $p$ satisfy the relevant summation conditions in \eqref{SigmaK'}.  Obviously, the above function $\hat{f}$ is also in $L^1(\mathbb{R}^2)$ and hence, we may use Lemma \ref{Poisson} to deduce that
\begin{equation} \label{RHS}
\begin{split}
& \sum\limits_{n\in \mathcal{O}} E_j(p,mn)\mathcal{E}(pmn)= \sum\limits_{(\alpha,\beta)\in \mathbb{Z}^2} \hat{f}(\alpha,\beta)\\
\ll & \frac{x}{K} \cdot \sum\limits_{(\alpha,\beta)\in \mathbb{Z}^2}\exp\left(-\pi \cdot \frac{N}{Kx^{3\varepsilon}}\cdot \left(\left(\beta+\alpha\sqrt{d}+\sigma_1(pm)x_1\right)^2+\left(-\beta+\alpha\sqrt{d}-\sigma_2(pm)x_2\right)^2\right)\right).
\end{split}
\end{equation}
The right-hand side is negligible unless
$$
\left|\beta+\alpha\sqrt{d}+\sigma_1(pm)x_1\right|\le x^{2\varepsilon-1/2}K^{1/2}
$$
and 
$$
\left|-\beta+\alpha\sqrt{d}-\sigma_2(pm)x_2 \right|\le x^{2\varepsilon-1/2}K^{1/2}.
$$
The above inequalities imply
$$
\left|\beta+\frac{\sigma_2(pm)x_2+\sigma_1(pm)x_1}{2}\right|\le x^{2\varepsilon-1/2}K^{1/2}
$$
and
$$
\left|\alpha- \frac{\sigma_2(pm)x_2-\sigma_1(pm)x_1}{2\sqrt{d}}\right|\le x^{2\varepsilon-1/2}K^{1/2}
$$
upon subtracting and adding and dividing by $2$ and $2\sqrt{d}$, respectively.
Hence, the right-hand side of \eqref{RHS} is negligble unless
$$
\left|\left|\frac{\sigma_2(pm)x_2+\sigma_1(pm)x_1}{2}\right|\right| \le x^{2\varepsilon-1/2}K^{1/2} \quad \mbox{ and } \quad 
\left|\left|\frac{\sigma_2(pm)x_2-\sigma_1(pm)x_1}{2\sqrt{d}}\right|\right| \le x^{2\varepsilon-1/2}K^{1/2},
$$
in which case it is bounded by $O(x/K)$. Hence, we deduce from \eqref{SigmaK'} that
\begin{equation*}
\begin{split}
& \Sigma_{K,j}'\ll x^{2\varepsilon}K\delta^{-2}+xK^{-1}\times\\ & 
\sum\limits_{\substack{m\in \mathcal{R}\\K\le \mathcal{N}(m)\le 2K}}
\sum\limits_{\substack{p\in \mathcal{O}\setminus\{0\}\\|\sigma_{1,2}(p)|\le x^{\varepsilon}\delta^{-1}}}  
\chi_{J'}\left(\left|\left|\frac{\sigma_2(pm)x_2+\sigma_1(pm)x_1}{2}\right|\right|\right)\chi_{J'}\left(\left|\left|\frac{\sigma_2(pm)x_2-\sigma_1(pm)x_1}{2\sqrt{d}}\right|\right|\right)
\end{split}
\end{equation*}
where the first summand on the right-hand side comes from the contribution of $n=0$, and $J'$ is the interval
\begin{equation} \label{J'def}
J':=\left[-x^{2\varepsilon-1/2}K^{1/2},x^{2\varepsilon-1/2}K^{1/2}\right].
\end{equation}
Writing $k=pm$ similarly as in subsection \ref{CauSch}, we deduce that
\begin{equation} \label{precount2}
\begin{split}
& \Sigma_{K,j}'
\ll x^{2\varepsilon}K\delta^{-2}+x^{1+\varepsilon}K^{-1}\times\\ & 
\sum\limits_{\substack{k\in \mathcal{O}\setminus\{0\}\\|\sigma_{1,2}(k)|\le cx^{\varepsilon}\delta^{-1}K^{1/2}}}   
\chi_{J'}\left(\left|\left|\frac{\sigma_2(k)x_2+\sigma_1(k)x_1}{2}\right|\right|\right)\chi_{J'}\left(\left|\left|\frac{\sigma_2(k)x_2-\sigma_1(k)x_1}{2\sqrt{d}}\right|\right|\right)
\end{split}
\end{equation}
for some constant $c>0$.

\subsection{Estimating the type I sum}
The bound \eqref{precount2} has the same shape as \eqref{precount}, and we can therefore directly apply our method in section \ref{counting} to bound this sum. In place of \eqref{simplewrite}, we now obtain
\begin{equation*} 
\Sigma_{K,j}'\ll  x^{2\varepsilon}K\delta^{-2}+x^{1+\varepsilon}K^{-1}\cdot  
\Theta(2U';a,b;W)\cdot \Theta(V';a,b;W),
\end{equation*}
where 
$$
U':=cx^{\varepsilon}\delta^{-1}K^{1/2} \quad \mbox{ and } \quad V'=W(x^{2\varepsilon-1/2}K^{1/2}+cx^{\varepsilon}\delta^{-1}K^{1/2}\Delta).
$$
The choices of $U'$ and $V'$ above are due to the fact that $k$ is of the form
\begin{equation*}
k=\lambda+\mu\sqrt{d}
\end{equation*}
with 
\begin{equation} \label{lambdamu}
|\lambda|,|\mu| \le U'.
\end{equation}

We recall \eqref{Thetaestfinal} which states that 
\begin{equation*}
\Theta(X;a,b;W)\ll  \left(\frac{X^2Z^2}{W}+1\right)\log^2 W,
\end{equation*}
and $\Delta=C/W$ (see \eqref{Deltachoice}). Hence, we have
\begin{equation} \label{largeK} 
\begin{split}
\Sigma_{K,j}'\ll  & x^{2\varepsilon}K\delta^{-2}+x^{1+8\varepsilon}K^{-1} 
\left(\delta^{-2}KW^{-1}Z^2+1\right)\left(x^{-1}KWZ^2+\delta^{-2}KW^{-1}Z^2+1\right)\\
\ll & x^{2\varepsilon}K\delta^{-2}+x^{8\varepsilon}
\left(K\delta^{-2}Z^4+K\delta^{-4}xW^{-2}Z^4+\delta^{-2}xW^{-1}Z^2+WZ^2+xK^{-1}\right)
\end{split}
\end{equation}
under the condition \eqref{qcond} which states that $W\ll x^{100}$.
 
The bound for $\Sigma_{K,j}'$ which we need to beat is $\ll x$. Therefore, if $K$ is small, \eqref{largeK} will not suffice. However, we can exclude small $K$, as we shall see in the next subsection.

\subsection{Excluding small $K$}  
Repeating our method in section \ref{counting}, 
we are led to counting solutions $(\alpha,\beta,\lambda,\mu)$ of the system of congruences
\begin{equation} \label{8.1''}
\begin{cases}
    a\lambda+bd\mu & \equiv \alpha \bmod{W},\\
    b\lambda+a\mu & \equiv \beta \bmod{W},
\end{cases}
\end{equation}
which is \eqref{8.1} in subsection \ref{countsol},
subject to the conditions 
$$
|\lambda|,|\mu|\le U' \quad \mbox{ and } \quad |\alpha|,|\beta|\le V'.
$$
Using the reduction in subsection \ref{additive}, \eqref{8.1''} implies
\begin{equation} \label{redsystem}
\begin{cases}
    a'\lambda+b'd\mu & \equiv \alpha \bmod{W'},\\
    b'\lambda+a'\mu & \equiv \beta \bmod{W'},
\end{cases}
\end{equation}
where $a'=a/Z$, $b'=b/Z$, $W'=W/Z^2$ and gcd$(b',W')=1$. Moreover, we deduced the congruence \eqref{8.2}, which states that
\begin{equation*} 
    b \alpha  \equiv a \beta  \bmod{W}. 
\end{equation*}
By the same reduction, the above congruence implies
\begin{equation} \label{afterred}
    \alpha  \equiv a'\overline{b'} \beta  \bmod{W'},
\end{equation}
where $b'$ is a multiplicative inverse modulo $W'$.
We recall that, according to \eqref{crucialDio},
\begin{equation} \label{crucialDio2}
\left| \frac{a'\overline{b'}}{W'} -\frac{u}{r} \right| \ll r^{-2} \mbox{ with gcd}(u,r)=1
\end{equation}
for some integer $r$ with  $r\asymp\sqrt{W'}$. Therefore, there is a constant $c_1>0$ such that \eqref{afterred} has no solutions $(\alpha,\beta)$
with $|\alpha|,|\beta|\le c_1\sqrt{W'}$ except for the trivial one, which is
$(\alpha,\beta)=(0,0)$. Hence, if $V'\le c_1\sqrt{W'}$ , then \eqref{redsystem}
becomes a homogeneous system
\begin{equation*} 
\begin{cases}
    a'\lambda+b'd\mu & \equiv 0 \bmod{W'},\\
    b'\lambda+a'\mu & \equiv 0 \bmod{W'}.
\end{cases}
\end{equation*}
From the second congruence, it follows that
$$
a'\overline{b'}\mu \equiv - \lambda \bmod{W'}.
$$
Again using \eqref{crucialDio2} and $|\mu|,|\lambda| \le U'$ by \eqref{lambdamu}, this congruence has only the solution $(\lambda,\mu)=(0,0)$ in the above range if $U'\le c_1
\sqrt{W'}$. But since $k\not=0$, this solution is excluded. In summary, the system \eqref{8.1'} has no solutions if 
$$
U'=cx^{\varepsilon}\delta^{-1}K^{1/2}\le c_1\sqrt{W'}=c_1\sqrt{W}/Z 
$$
and 
$$
V'=W(x^{2\varepsilon-1/2}K^{1/2}+Ccx^{\varepsilon}\delta^{-1}K^{1/2}W^{-1})\le c_1\sqrt{W'}=c_1\sqrt{W}/Z.
$$
Therefore, the sum over $k$ in \eqref{precount2} is empty and thus
\begin{equation} \label{precount4}
\Sigma_{K,j}'\ll x^{2\varepsilon}\delta^{-2}K
\quad \mbox{ if }
K\le c_2\min\left\{x^{-2\varepsilon}\delta^2WZ^{-2},x^{1-4\varepsilon}W^{-1}Z^{-2}\right\}
\end{equation}
for some constant $c_2>0$.
Combining \eqref{largeK} and \eqref{precount4}, we get the estimate
\begin{equation*} 
\begin{split}
\Sigma_{K,j}'
\ll x^{12\varepsilon}
\left(K\delta^{-2}Z^4+K\delta^{-4}xW^{-2}Z^4+\delta^{-2}xW^{-1}Z^2+WZ^2\right)
\end{split}
\end{equation*}
Plugging this into \eqref{T1arrive}, we obtain
\begin{equation*}
T_{I}\ll  x^{13\varepsilon}\left(MZ^4+M\delta^{-2}xW^{-2}Z^4+xW^{-1}Z^2+\delta^2WZ^2\right).
\end{equation*}
If $(x_1,x_2)$ is $\eta$-good, then this gives the final bound
\begin{equation} \label{T1finalgood}
T_{I}\ll  x^{13\varepsilon}\left(MW^{4\eta}+M\delta^{-2}xW^{4\eta-2}+xW^{2\eta-1}+\delta^2W^{2\eta+1}\right).
\end{equation}

\section{Conclusion}
Now we apply Theorem \eqref{T4.1}. Recalling the definitions of $\omega(\mathfrak{q})$ and $\tilde{\omega}(\mathfrak{q})$ in \eqref{2.1} and \eqref{2.2} and the bound \eqref{omegabounds}, we have
\begin{align*} 
  \lim_{R\to \infty} \sum_{\substack{\mathfrak{a}\in \mathcal{I} \\ \mathcal{N}(\mathfrak{a})<R}} d_4(\mathfrak{a})w(\mathfrak{a})\ll N^{1+\varepsilon}\ll x
\end{align*}
if $w=\omega$ or $w=\tilde{\omega}$. Now combining \eqref{thediff}, \eqref{5.1}, \eqref{5.2},\eqref{TKaftercountgood} and \eqref{T1finalgood} and choosing
$$
\mu=\frac{1}{4}, \quad \kappa=\frac{1}{2}, \quad M=2x^{1/4}, 
$$ 
we obtain
\begin{equation} \label{diffes} 
\begin{split}
& x^{-13\varepsilon}(\mathcal{T}(N)-\tilde{\mathcal{T}}(N))\\
\ll & x^{1/4}W^{4\eta}+\delta^{-2}x^{5/4}W^{4\eta-2}+xW^{2\eta-1}+\delta^2W^{2\eta+1}
+\\ &  \left(\delta+x^{1/2}W^{-1}\right)\left(x^{7/8}W^{2\eta}+x^{1/2}W^{1/2+\eta}\right)+xW^{\eta-1/2}+x^{3/4}.
\end{split}
\end{equation}
Now we optimize the parameters. First, we choose $x$ depending on $W$ in such a way $\delta=x^{1/2}W^{-1}$, i.e.  
$$
x:=(\delta W)^2
$$
and hence
$$
W\asymp x^{1/2}\delta^{-1}.
$$
Then \eqref{diffes} turns into
\begin{equation*} 
\begin{split}
& x^{-13\varepsilon}(\mathcal{T}(N)-\tilde{\mathcal{T}}(N))\\
\ll & x^{1/4+2\eta}\delta^{-4\eta}+
x^{7/8+\eta}\delta^{1-2\eta}+x^{3/4+\eta/2}\delta^{1/2-\eta}
+x^{3/4}.
\end{split}
\end{equation*}
Recalling $N:=\left\lceil x^{1-\varepsilon}\right\rceil$, the estimate 
$$
\mathcal{T}(N)-\tilde{\mathcal{T}}(N)\ll \delta^2N^{1-\varepsilon}
$$
in Theorem \ref{difftheorem} holds if $\varepsilon\le 1/14$ and 
$$ 
\delta\ge N^{-\nu+15\varepsilon}
$$
with 
$$
\nu:=\min\left\{\frac{3/4-2\eta}{2+4\eta},\frac{1/8-\eta}{1+2\eta},\frac{1/4-\eta/2}{3/2+\eta},\frac{1}{8}\right\}=
\frac{1/8-\eta}{1+2\eta}
$$
This establishes Theorem \ref{difftheorem}. 
 
\section{Unsmoothing} \label{unsmoothing}
Finally, we shall use Theorem \ref{difftheorem} and Corollary \ref{difftheorem2} to establish our main result, Theorem \ref{mainresult}. Recall the definition of $F(\mathfrak{q})$ in \eqref{Fdef}. The set 
$$
\Lambda(q):=\left\{\left(\frac{\sigma_1(p)}{\sigma_2(q)},\frac{\sigma_2(p)}{\sigma_2(q)}\right) : p\in \mathcal{O}\right\}
$$
is a lattice in $\mathbb{R}^2$ which has a fundamental parallelogram whose area equals $1/\mathcal{N}(\mathfrak{q})$ and whose side lengths are $\asymp 1/\sqrt{\mathcal{N}(\mathfrak{q})}$, by the same arguments as at the end of  section \ref{Psiev}. If 
$$
\mathcal{N}(\mathfrak{q})\le N^{1+\varepsilon} \quad \mbox{ and } 
\quad \delta\le 
N^{-2\varepsilon},
$$
and $N$ is large enough, then taking into account the exponential decay of $\Omega_{\delta/\sqrt{N}}(y)$, we have
$$
F(\mathfrak{q})\ll 1
$$ 
if there exists $p\in \mathcal{O}$ such that
\begin{equation} \label{finalasy}
\left|x-\frac{\sigma_i(p)}{\sigma_i(q)}\right|\le \frac{\delta}{N^{1/2-\varepsilon}} \quad \mbox{ for } i=1,2,
\end{equation}
and $F(\mathfrak{q})$ is negligible otherwise. Using this together with $\eqref{Psisize}$ and the definition of $\tilde{\omega}(\mathfrak{q})$ in \eqref{2.1}, it follows that $\tilde{\mathcal{T}}(N)/\log N$ is dominated by the number of prime ideals $\mathfrak{p}\in \mathcal{I}\setminus 0$ satisfying 
$$
\mathcal{N}(\mathfrak{p})\le N^{1+\varepsilon}
$$
and \eqref{finalasy} for some $q$ generating $\mathfrak{p}$ and $p\in \mathcal{O}$, provided $\tilde{\mathcal{T}}(N)\gg 1$ and $\delta\le N^{-2\varepsilon}$ which latter we may assume without loss of generality. This together with Theorem \ref{difftheorem} and Corollary \ref{difftheorem2} implies the main result upon re-defining $\varepsilon$.

\section{Appendix - Proof of Harman's sieve for quadratic fields} 
Now we prove Theorem \ref{T4.1}, our weighted version of Harman's asymptotic sieve for ideals in the ring of integers of a quadratic field (not necessarily real quadratic and not necessarily of class number 1). We follow closely the proof of a weighted integer version of Harman's sieve for imaginary quadratic number fields in \cite[section 7]{BaiTech}.

The following lemma, known as "cosmetic surgery" will be used for the separation of variables. 
\begin{Lemma}  \label{L3.2}
For any two distinct real numbers $\rho,\gamma >0 $ and $T\geq1$ one has 
\begin{align*}
   \Bigg|1_{\gamma<\rho}-\frac{1}{\pi}\int_{-T}^{T}e^{i\gamma t} \frac{\sin(\rho t)}{t}\Bigg| \ll \frac{1}{T|\gamma-\rho|} ,
\end{align*}
where the implied constant is absolute.
\end{Lemma}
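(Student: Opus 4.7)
The plan is to reduce the integral to a sum of two classical sine integrals and then invoke the standard asymptotic for $\operatorname{Si}$. First I would write
$$
\sin(\rho t) = \frac{e^{i\rho t}-e^{-i\rho t}}{2i},
$$
so that
$$
\frac{1}{\pi}\int_{-T}^{T}e^{i\gamma t}\,\frac{\sin(\rho t)}{t}\,dt = \frac{1}{2\pi i}\int_{-T}^{T}\frac{e^{i(\gamma+\rho)t}}{t}\,dt \;-\; \frac{1}{2\pi i}\int_{-T}^{T}\frac{e^{i(\gamma-\rho)t}}{t}\,dt.
$$
For each $a\in\mathbb{R}$, by splitting $e^{iat}=\cos(at)+i\sin(at)$ and using that $\cos(at)/t$ is odd,
$$
\frac{1}{2\pi i}\int_{-T}^{T}\frac{e^{iat}}{t}\,dt \;=\; \frac{\operatorname{sgn}(a)}{\pi}\,\operatorname{Si}(|a|T), \qquad \operatorname{Si}(x):=\int_{0}^{x}\frac{\sin u}{u}\,du,
$$
with the convention that this equals $0$ when $a=0$.

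Next I would invoke the classical asymptotic $\operatorname{Si}(x)=\tfrac{\pi}{2}-\tfrac{\cos x}{x}+O(x^{-2})$ valid for $x\ge 1$, so that
$$
\frac{\operatorname{sgn}(a)}{\pi}\,\operatorname{Si}(|a|T) \;=\; \frac{\operatorname{sgn}(a)}{2} \;+\; O\!\left(\frac{1}{|a|T}\right)
$$
whenever $|a|T\ge 1$ (and the identity is trivial for $|a|T<1$, where the error term $1/(|a|T)$ is itself $\ge 1$, dominating any bounded quantity). Applying this with $a=\gamma+\rho>0$ and $a=\gamma-\rho\ne 0$ yields
$$
\frac{1}{\pi}\int_{-T}^{T}e^{i\gamma t}\,\frac{\sin(\rho t)}{t}\,dt \;=\; \frac{1}{2}-\frac{\operatorname{sgn}(\gamma-\rho)}{2} \;+\; O\!\left(\frac{1}{(\gamma+\rho)T}\right) \;+\; O\!\left(\frac{1}{|\gamma-\rho|T}\right).
$$

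Finally, the main term equals $1$ if $\gamma<\rho$ and $0$ if $\gamma>\rho$, which is exactly $\mathbf{1}_{\gamma<\rho}$. Since $\gamma,\rho>0$ we have $\gamma+\rho\ge|\gamma-\rho|$, so the first error term is absorbed into the second, giving the claimed bound $O(1/(T|\gamma-\rho|))$. The only step that requires any care is the uniformity in $|a|T$ when invoking the $\operatorname{Si}$ asymptotic, but this is handled by the trivial observation above, so I do not expect a genuine obstacle here.
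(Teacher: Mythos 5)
The paper does not actually prove this lemma; it cites \cite[Lemma 2.2]{Harmanbook} ("cosmetic surgery") and moves on. Your proposal therefore supplies a proof where the paper gives none, and your proof is correct. The decomposition $\sin(\rho t)=(e^{i\rho t}-e^{-i\rho t})/(2i)$ reduces everything to two sine integrals $\mathrm{Si}(|a|T)$ with $a=\gamma\pm\rho$; the asymptotic $\mathrm{Si}(x)=\tfrac{\pi}{2}+O(1/x)$ for $x\geq 1$ (and the trivial bound for $x<1$) gives the main term $\tfrac12(\mathrm{sgn}(\gamma+\rho)-\mathrm{sgn}(\gamma-\rho))=1_{\gamma<\rho}$, and the inequality $\gamma+\rho\geq|\gamma-\rho|$ absorbs the first error term into the second, as you say. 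This is the standard route (and is in the spirit of Harman's proof). Two small housekeeping points you would want to make explicit in a polished write-up: (i) the two split integrals each have a $1/t$ singularity at $t=0$ and must be read as principal values, with the singular $\cos$-parts vanishing by oddness before you pass to $\mathrm{Si}$; (ii) the case $a=0$ never arises, since $\gamma+\rho>0$ by positivity and $\gamma-\rho\neq 0$ by the hypothesis $\gamma\neq\rho$, so the convention you introduce for $a=0$ is unnecessary. Neither point affects correctness.
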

\begin{proof}
See, for instance, \cite[Lemma 2.2]{Harmanbook}. 
\end{proof}

Now we begin with the proof of Theorem \ref{T4.1}, where we use the notations introduced in subsection \ref{nota} and the following notations.

\begin{itemize}
\item  For a general condition $(C)$, we write
$$
1_{(C)} :=\begin{cases} 1 & \mbox{ if } (C) \mbox{ is satisfied,}\\ 0 & \mbox{ if } (C) \mbox{ is not satisfied.}
\end{cases}
$$
\item If $M$ is a set, we write
$$
1_{M}(x) := \begin{cases} 1 \mbox{ if } x\in M,\\ 0 \mbox{ otherwise.} \end{cases}
$$
\end{itemize}
First we define the M\"obius function $\mu$ for ideals. Assume that $\mathfrak{a}$ is an ideal in $\mathcal{O}$ with prime ideal factorization
$$
\mathfrak{a}=\prod_{j=1}^{k} \mathfrak{p}_j^{\alpha_j}.
$$ 
Then we set
\begin{align*}
 \mu(\mathfrak{a}):= 
       \begin{cases}
       (-1)^k & \mbox{ if } \alpha_j = 1 \mbox{ for } j=1,2,...,k, \\
       0 & \mbox{ otherwise.}
     \end{cases}
\end{align*}
Since
$$
\sum\limits_{\mathfrak{d}|\mathfrak{a}} \mu(\mathfrak{d})= \begin{cases} 1 & \mbox{ if } \mathfrak{a}=\mathcal{O}\\ 0 & \mbox{ otherwise,}
\end{cases}
$$
we have
\begin{align} \label{4.4}
\begin{split}
    S(w,z) =\sum_{\mathfrak{b}\in \mathcal{I}\setminus{0}} w(\mathfrak{b})\sum_{\substack{\mathfrak{d}|\mathcal{P}(z)\\ \mathfrak{d}|\mathfrak{b}}}\mu(\mathfrak{d})
    =& \sum_{\mathfrak{d}|\mathcal{P}(z)} \mu(\mathfrak{d})\sum_{\mathfrak{a}\in\mathcal{I}\setminus{0}}w(\mathfrak{ad}).
\end{split}
\end{align}
Let 
\begin{align} \label{4.5}
\Delta(\mathfrak{d})=\sum_{\mathfrak{a}\in \mathcal{I}\setminus{0}}(\omega(\mathfrak{ad})-\Tilde{\omega}(\mathfrak{ad})).
\end{align}
Applying \eqref{4.4} for $w=\omega$ and $w=\Tilde{\omega}$ yields
\begin{align} \label{4.6}
\begin{split}
S(\omega,z)-S(\Tilde{\omega},z)=\sum_{\mathfrak{d}|\mathcal{P}(z)}\mu(\mathfrak{d})\Delta(\mathfrak{d})=&
\bigg\{\sum_{\substack{\mathfrak{d}|\mathcal{P}(z)\\ \mathcal{N}(\mathfrak{d})<M}} + \sum_{\substack{\mathfrak{d}|\mathcal{P}(z)\\ \mathcal{N}(\mathfrak{d})\geq M}}\bigg\}\mu(\mathfrak{d})\Delta(\mathfrak{d})\\
=& S^{\sharp} + S^{\flat}, \mbox{ say}.
\end{split}
\end{align}
Using \eqref{4.2} with $a_{\mathfrak{d}}= \mu(\mathfrak{d})1_{\mathfrak{d}|\mathcal{P}(z)}$, we deduce that $|S^{\sharp}| \leq Y$. Therefore, to prove the theorem, it suffices to show that 
\begin{align} \label{4.7}
|S^{\flat}|\ll Y(\log(xX))^3. 
\end{align} \par
The next step is to arrange $S^{\flat}$ into subsums according to the "sizes" of the prime factors in $\mathfrak{d}$ (where $\mathfrak{d}$ is the summation variable in \eqref{4.6}).
To have some notion of size, fix some total order $\prec$ on $\mathbb{P}(z)$ such that if $\mathcal{N}(\mathfrak{p}_2)< \mathcal{N}(\mathfrak{p}_1)$, then $\mathfrak{p}_2 \prec \mathfrak{p}_1$ (many such orders exist, and all will do equally well). 
Moreover, for $\mathfrak{p}\in \mathbb{P}(z)$ let 
\begin{align*}
\Pi(\mathfrak{p})=\prod_{\mathfrak{q}\prec\mathfrak{p}} \mathfrak{q}.
\end{align*}
Now take $ g:\mathcal{I}\longrightarrow\mathbb{C}$ to be any  function. We may group the terms of the sum
\begin{align*}
S=\sum_{\mathfrak{d}|\mathcal{P}(z)}\mu(\mathfrak{d})g(\mathfrak{d})
\end{align*}
according to the largest factor $ \mathfrak{p}_1$  of $\mathfrak{d}$ with respect to $\prec$, getting the identity
\begin{align} \label{4.8}
S = g(\mathcal{O})-\sum_{\mathfrak{p}_1\in \mathbb{P}(z)}\sum_{\mathfrak{d}|\Pi(\mathfrak{p}_1)} \mu(\mathfrak{d})g(\mathfrak{p}_1\mathfrak{d}).
\end{align}
Similarly, for the part $\sum_{\mathfrak{d}|\Pi(\mathfrak{p}_1)} \mu(\mathfrak{d})g(\mathfrak{p}_1\mathfrak{d})$, we have 
\begin{align} \label{4.9}
    \sum_{\mathfrak{d}|\Pi(\mathfrak{p}_1)} \mu(\mathfrak{d})g(\mathfrak{p}_1\mathfrak{d})= g(\mathfrak{p}_1)-\sum_{\mathfrak{p}_2\prec\mathfrak{p}_1}\sum_{\mathfrak{d}|\Pi(\mathfrak{p}_1)} \mu(\mathfrak{d})g(\mathfrak{p}_1\mathfrak{p}_2\mathfrak{d}).
\end{align}
Minding the innermost sum on the right-hand side above, it is obvious that the above identity can be iterated if so desired. To describe for which sub-sums iteration is beneficial, we decompose $\mathbb{P}(z)$ into
\begin{align*}
    \mathbb{P}(z)=&\{\mathfrak{p}\in \mathbb{P}(z):\mathcal{N}(\mathfrak{p}_1)>x^\mu\}\mathbin{\dot{\cup}}\{\mathfrak{p}\in \mathbb{P}(z):\mathcal{N}(\mathfrak{p}_1)\leq x^\mu\}\\
    =&\mathcal{P}_1\mathbin{\dot{\cup}}\mathcal{Q}_1,\ \mbox{say},
\end{align*}
and inductively for $s=2,3,...$, we define
\begin{align*}
  \mathcal{Q'}_s =& \{(\mathfrak{p}_1,...,\mathfrak{p}_{s-1},\mathfrak{p}_s)\in{\mathbb{P}(z)}^s:\mathfrak{p}_s \prec \mathfrak{p}_{s-1},(\mathfrak{p}_1,...,\mathfrak{p}_{s-1})\in \mathcal{Q}_{s-1}\}\\
  =&\mathcal{P}_s\mathbin{\dot{\cup}}\mathcal{Q}_s,\ \mbox{say},
\end{align*}
where
\begin{align*}
  \mathcal{P}_s =& \{(\mathfrak{p}_1,...,\mathfrak{p}_{s-1},\mathfrak{p}_s)
  \in\mathcal{Q'}_s:\mathcal{N}(\mathfrak{p}_1\mathfrak{p}_2\cdots \mathfrak{p}_s)>x^\mu\}
\end{align*}
and 
\begin{align*}
  \mathcal{Q}_s =& \{(\mathfrak{p}_1,...,\mathfrak{p}_{s-1},\mathfrak{p}_s)\in\mathcal{Q'}_s:\mathcal{N}(\mathfrak{p}_1\mathfrak{p}_2\cdots \mathfrak{p}_s)\leq x^\mu\}.
\end{align*}

Assuming that $g$ vanishes on arguments $\mathfrak{a}$ with $\mathcal{N}(\mathfrak{a})\leq x^\mu$, and on applying \eqref{4.8} and \eqref{4.9}, we have 
\begin{align*}
\begin{split}
S= & -\bigg(\sum_{\mathfrak{p_1}\in\mathcal{P}_1}+\sum_{\mathfrak{p}_1\in\mathcal{Q}_1}\bigg)\sum_{\mathfrak{d}|\Pi(\mathfrak{p}_1)}\mu(\mathfrak{d})g(\mathfrak{p}_1\mathfrak{d})\\
  =& -\sum_{\mathfrak{p}_1\in\mathcal{P}_1}\sum_{\mathfrak{d}|\Pi(\mathfrak{p}_1)}\mu(\mathfrak{d})g(\mathfrak{p}_1\mathfrak{d})+\sum_{(\mathfrak{p}_1,\mathfrak{p}_2)\in \mathcal{P}_2}\sum_{\mathfrak{d}|\Pi(\mathfrak{p}_2)}\mu(\mathfrak{d})g(\mathfrak{p}_1\mathfrak{p}_2\mathfrak{d})
  + \sum_{(\mathfrak{p}_1,\mathfrak{p}_2)\in \mathcal{Q}_2}\sum_{\mathfrak{d}|\Pi(\mathfrak{p}_2)}\mu(\mathfrak{d})g(\mathfrak{p}_1\mathfrak{p}_2\mathfrak{d}).
\end{split}
\end{align*}
On iterating this process - always applying \eqref{4.9} to the $\mathcal{Q}$-part - it transpires that 
\begin{align*}
\begin{split}
   S= &\sum_{s\leq t}(-1)^s\sum_{(\mathfrak{p}_1,...,\mathfrak{p}_s)\in\mathcal{P}_s}\sum_{\mathfrak{d}|\Pi(\mathfrak{p_s})}\mu(\mathfrak{d})g(\mathfrak{p}_1\mathfrak{p}_2\cdots \mathfrak{p}_s\mathfrak{d})\\
   & +(-1)^t\sum_{(\mathfrak{p}_1,...,\mathfrak{p}_t)\in\mathcal{Q}_t}\sum_{\mathfrak{d}|\Pi(\mathfrak{p_t})}\mu(\mathfrak{d})g(\mathfrak{p}_1\mathfrak{p}_2\cdots \mathfrak{p}_t\mathfrak{d})
\end{split}
\end{align*}
for any $t\in \mathbb{N}$. Since the product of $t$ prime ideals has norm greater than or equal to $2^t$, we have
\begin{align*}
    \mathcal{Q}_t=\emptyset \mbox{ for } t>\frac{\mu}{\log 2}\log x.
\end{align*}
Hence,
\begin{align*}
    S=\sum_{s\leq t}(-1)^s \sum_{(\mathfrak{p}_1,...,\mathfrak{p}_s)\in\mathcal{P}_s}\sum_{\mathfrak{d}|\Pi(\mathfrak{p}_s)}\mu(\mathfrak{d})g(\mathfrak{p}_1\mathfrak{p}_2\cdots \mathfrak{p}_s\mathfrak{d})
\end{align*}
for 
\begin{align} \label{4.10}
    t:=\Bigl\lfloor{\frac{\log x}{\log 2}}\Bigr\rfloor+1 \ll \log x.   
\end{align} \par
We apply this to $S^{\flat}$ with $g(\mathfrak{a})=\Delta(\mathfrak{a})1_{\{{\mathcal{N}(\mathfrak{a})\geq M}\}}$. Note that since $M>x^\mu$, we have $g(\mathfrak{a})=0$ for all $\mathcal{N}(\mathfrak{a})\leq x^\mu$, as was assumed in the above arguments. Thus,
\begin{align} \label{4.11}
    S^{\flat}=\sum_{s\leq t}  (-1)^sS^{\flat}(s),
\end{align}
where 
\begin{align*}
     S^{\flat}(s)=\sum_{\substack{(\mathfrak{p}_1,...,\mathfrak{p}_s)\in\mathcal{P}_s \\ \mathfrak{a}=\mathfrak{p}_1\cdots \mathfrak{p}_s}}\sum_{\substack{\mathfrak{d}|\Pi(\mathfrak{p_s})\\ \mathcal{N}(\mathfrak{a}\mathfrak{p}_s)\geq M}} \mu(\mathfrak{d})\Delta(\mathfrak{ad}).
\end{align*}
Another application of \eqref{4.9} gives 
\begin{align} \label{4.12}
\begin{split}
 S^{\flat}(s)=&\sum_{\substack{(\mathfrak{p}_1,...,\mathfrak{p}_s)\in\mathcal{P}_s \\ \mathfrak{a}=\mathfrak{p}_1\cdots \mathfrak{p}_s\\ \mathcal{N}(\mathfrak{a})\geq M}} \Delta(\mathfrak{a})-\sum_{\substack{(\mathfrak{p}_1,...,\mathfrak{p}_s)\in\mathcal{P}_s \\ \mathfrak{a}=\mathfrak{p}_1\cdots \mathfrak{p}_s}} \sum_{\mathfrak{p}\prec \mathfrak{p}_s} \sum_{\substack{\mathfrak{d}|\Pi(\mathfrak{p)} \\ \mathcal{N}(\mathfrak{apd})\geq M}} \mu(\mathfrak{d})\Delta(\mathfrak{apd}) \\
 =& S^{\flat}_{1}(s)-S^{\flat}_{2}(s), \ \mbox{say}. 
\end{split}
\end{align}
Given $\mathfrak{a}=\mathfrak{p}_1\cdots \mathfrak{p}_{s-1}\mathfrak{p}_s$ with 
\begin{align*}
(\mathfrak{p}_1,...,\mathfrak{p}_{s-1},\mathfrak{p}_s)\in \mathcal{P}_s \quad \mbox{and} \quad (\mathfrak{p}_1,...,\mathfrak{p}_{s-1})\in \mathcal{Q}_{s-1},
\end{align*}
and noting that $\mathcal{N}(\mathfrak{p}_s)\leq \mathcal{N}(\mathfrak{p}_1)<z=x^ \kappa $, we have
\begin{align*}
  x^\mu < \mathcal{N}(\mathfrak{a})=\mathcal{N}(\mathfrak{p}_1\cdots \mathfrak{p}_{s-1})\mathcal{N}(\mathfrak{p}_s)<x^\mu \cdot x^\kappa.
\end{align*}
Using this, we find that $S^{\flat}_{1}(s)$ can be expressed as 
\begin{align*}
 \mathop{\sum\sum}\limits_{\mathfrak{a},\mathfrak{b} \in \mathcal{I}\setminus{0}} a_\mathfrak{a} (\omega(\mathfrak{ab})-\Tilde{\omega}(\mathfrak{ab})),
\end{align*}
where the coefficients
\begin{align*}
a_\mathfrak{a}=1_{\{\mathcal{N}(\mathfrak{a})\geq M\}} 1_{\{\mathfrak{p}_1\cdots \mathfrak{p}_s: (\mathfrak{p}_1,...,\mathfrak{p}_s)\in \mathcal{P}_s\}}(\mathfrak{a})
\end{align*}
are only supported on $\mathfrak{a}$ with $x^\mu<\mathcal{N}(\mathfrak{a})<x^{\mu+\kappa}$. Hence by \eqref{4.3}, 
\begin{align} \label{4.13}
|S^{\flat}_{1}(s)|\leq Y. 
\end{align}

Moving on to $S^{\flat}_{2}(s)$, we expand the definition \eqref{4.5} of $\Delta$, getting
\begin{align*}
    S^{\flat}_{2}(s)=S^{\flat}_{2}(s,\omega)-S^{\flat}_{2}(s,\Tilde{\omega}),
\end{align*}
where
\begin{align*}
 S^{\flat}_{2}(s,w):=&\sum_{\substack{(\mathfrak{p}_1,...,\mathfrak{p}_s)\in\mathcal{P}_s \\ \mathfrak{a}=\mathfrak{p}_1\cdots \mathfrak{p}_s}} \sum_{\mathfrak{p}\prec \mathfrak{p}_s} \sum_{\substack{\mathfrak{d}|\Pi(\mathfrak{p)} \\ \mathcal{N}(\mathfrak{apd})\geq M}} \mu(\mathfrak{d})\sum_{\mathfrak{b}\in\mathcal{I}\setminus{0}}w(\mathfrak{abpd})\\
 =&\sum_{\substack{(\mathfrak{p}_1,...,\mathfrak{p}_s)\in\mathcal{P}_s \\ \mathfrak{a}=\mathfrak{p}_1\cdots \mathfrak{p}_s}}\sum_{\mathfrak{n}\in\mathcal{I}\setminus{0}} \sum_{\mathfrak{p}\prec \mathfrak{p}_s}\mathop{\sum\sum}\limits_{\substack{\mathfrak{b,d}\\ \mathfrak{d}|\Pi(\mathfrak{p})\\ \mathfrak{bpd=n} \\ \mathcal{N}(\mathfrak{apd})\geq M}}\mu(\mathfrak{d})w(\mathfrak{an}).
 \end{align*}
 In order to apply \eqref{4.3}, we must disentangle the variables $\mathfrak{a}$ and $\mathfrak{n}$ in the above summation. To this end, we split
 \begin{align} \label{4.14}
   \sum_{\mathfrak{p}\prec \mathfrak{p}_s} = \sum_{\substack{\mathfrak{p}\prec \mathfrak{p}_s\\\mathcal{N}(\mathfrak{p})=\mathcal{N}(\mathfrak{p}_s)}}+\sum_{\substack{\mathfrak{p}\prec \mathfrak{p}_s\\\mathcal{N}(\mathfrak{p})<\mathcal{N}(\mathfrak{p}_s)}} 
 \end{align}
 to obtain a decomposition 
 \begin{align} \label{4.15}
 S^{\flat}_{2}(s,w)=S_{2}^{\flat,=}(s,w)+S_{2}^{\flat,<}(s,w), \mbox{ say.} 
 \end{align}
 For $S_{2}^{\flat,<}(s,w)$ we have 
 \begin{align*}
 S_{2}^{\flat,<}(s,w)=\sum_{\substack{(\mathfrak{p}_1,...,\mathfrak{p}_s)\in\mathcal{P}_s \\ \mathfrak{a}=\mathfrak{p}_1\cdots \mathfrak{p}_s}}\sum_{\mathfrak{n}\in\mathcal{I}\setminus{0}} \sum_{\mathfrak{p}\prec \mathfrak{p}_s}\mathop{\sum\sum}\limits_{\substack{\mathfrak{b,d}\\ \mathfrak{d}|\Pi(\mathfrak{p})\\ \mathfrak{bpd=n}}} \mu(\mathfrak{d})\chi(\mathfrak{a,d,p},\mathfrak{p}_s) w(\mathfrak{an}),
 \end{align*}
 where 
 \begin{align*}
    \chi(\mathfrak{a,d,p},\mathfrak{p}_s)=1_{\{\mathcal{N}(\mathfrak{apd})\ge M\}}1_{\{\mathcal{N}(\mathfrak{p})<\mathcal{N}(\mathfrak{p}_s) \}},
 \end{align*}
 and the sum $S_{2}^{\flat,=}(s,w)$ can be expressed similarly but needs a little more care.\par
The first summation on the right-hand side of \eqref{4.14} contains at most one term because $\mathbb{K}$ is a quadratic extension of $\mathbb{Q}$
and hence for each $l$ there are at most two prime ideals with norm $l$.
We will write $\mathcal{P}_s'$ for the set of $(\mathfrak{p}_1,...,\mathfrak{p}_s)\in \mathcal{P}_s$ for which there is such a term, that is, some $\mathfrak{p}\prec\mathfrak{p}_s$ with $\mathcal{N}(\mathfrak{p})=\mathcal{N}(\mathfrak{p} _s)$.
Furthermore, let
\begin{align*}
    \mathbb{P}'(z):=\{\mathfrak{p}\in\mathbb{P}(z) : \mbox{ there exists }\mathfrak{p}_s \mbox{ such that } \mathfrak{p}\prec\mathfrak{p}_s \mbox{ and }\mathcal{N}(\mathfrak{p})=\mathcal{N}(\mathfrak{p}_s) \}.
\end{align*}
Then
\begin{align*}
     S_{2}^{\flat,=}(s,w):=\sum_{\substack{(\mathfrak{p}_1,...,\mathfrak{p}_s)\in\mathcal{P}_s' \\ \mathfrak{a}=\mathfrak{p}_1\cdots\mathfrak{p}_s}}\sum_{\mathfrak{n}\in\mathcal{I}\setminus{0}} \sum_{\mathfrak{p}\in\mathbb{P}'(z)}\mathop{\sum\sum}\limits_{\substack{\mathfrak{b,d}\\\mathfrak{d}|\Pi(\mathfrak{p})\\ \mathfrak{bpd=n}}} \mu(\mathfrak{d})\Tilde{\chi}(\mathfrak{a,d,p},\mathfrak{p}_s) w(\mathfrak{an}),
\end{align*}
where
\begin{align} \label{4.16}
\begin{split}
    \Tilde{\chi}(\mathfrak{a,d,p},\mathfrak{p}_s)=&
1_{\{{\mathcal{N}(\mathfrak{apd})\geq M}\}}1_{\{\mathcal{N}(\mathfrak{p})=\mathcal{N}(\mathfrak{p}_s)\}}\\
=&1_{\{{\mathcal{N}(\mathfrak{apd})\geq M}\}}1_{\{\mathcal{N}(\mathfrak{p})\leq\mathcal{N}(\mathfrak{p}_s)\}}
-\chi(\mathfrak{a,d,p},\mathfrak{p}_s).
\end{split}
\end{align}
We choose some real number $\rho$ with $|\rho|\leq 1/2$ and $\{M+\rho\}=1/2$, where $\{.\}$ denotes the fractional part. Then the condition $\mathcal{N}(\mathfrak{apd})\geq M$ is equivalent to $\log \mathcal{N}(\mathfrak{apd}) \geq \log(M+\rho)$ and 
\begin{align*}
    |\log \mathcal{N}(\mathfrak{apd})- \log(M+\rho)|\geq \log\frac{x+1}{x+1/2}\geq \frac{1}{3x}.
\end{align*}
Therefore, Lemma \ref{L3.2} shows that 
\begin{align*}
    1_{\{\mathcal{N}(\mathfrak{apd})\geq M\}}=1-\frac{1}{\pi}\int_{-T}^{T}\mathcal{N}(\mathfrak{apd})^{it}\sin{(t\log(M+\rho))}\frac{dt}{t}+ O\left(\frac{x}{T}\right)
\end{align*}
for every $T\geq 1$. Similarly,
\begin{align*}
    1_{\{\mathcal{N}(\mathfrak{p})<\mathcal{N}(\mathfrak{p}_s)\}} =&\frac{1}{\pi}\int_{-T}^{T}e^{it/2}e^{it\mathcal{N}(\mathfrak{p})}\sin{(t\mathcal{N}(\mathfrak{p}_s))}\frac{dt}{t}+O\left(\frac{1}{T}\right)
\end{align*}
and 
\begin{align*}
    1_{\{\mathcal{N}(\mathfrak{p})\leq \mathcal{N}(\mathfrak{p}_s)\}} =&\frac{1}{\pi}\int_{-T}^{T}e^{-it/2}e^{it\mathcal{N}(\mathfrak{p})}\sin{(t\mathcal{N}(\mathfrak{p}_s))}\frac{dt}{t}+O\left(\frac{1}{T}\right).
\end{align*}
Thus,
\begin{align} \label{4.17}
\begin{split}
  S_{2}^{\flat,<}(s,\omega)=&\frac{1}{\pi}\int_{-T}^{T}\mathop{\sum\sum}\limits_{\substack{\mathfrak{a},\mathfrak{n}\in \mathcal{I}\setminus{0}}}a_\mathfrak{a}(t)b_\mathfrak{n}(t)\omega(\mathfrak{an})\frac{dt}{t}\\-& \frac{1}{\pi^2}\int_{-T}^{T}\int_{-T}^{T}\mathop{\sum\sum}\limits_{\substack{\mathfrak{a},\mathfrak{n}\in \mathcal{I}\setminus{0}}}a_\mathfrak{a}(t,\tau)b_\mathfrak{n}(t,\tau)\omega(\mathfrak{an})\frac{d\tau}{\tau}\frac{dt}{t}\\ &+O\Bigg(\Bigg(\frac{x}{T}+\frac{1}{T}\int_{-T}^{T}|\sin{(\tau\log(M+\rho))}|\frac{d\tau}{\tau}\Bigg)\times\\ &  \Bigg(\sum_{\substack{(\mathfrak{p}_1,...,\mathfrak{p}_s)\in\mathcal{P}_s \\ \mathfrak{a}=\mathfrak{p}_1\cdots \mathfrak{p}_s}}\sum_{\mathfrak{n}\in\mathcal{I}\setminus{0}} 
\sum_{\mathfrak{p}\prec \mathfrak{p}_s}\mathop{\sum\sum}\limits_{\substack{\mathfrak{b}, \mathfrak{d}\\ \mathfrak{d}|\Pi(\mathfrak{p})\\ \mathfrak{bpd=n}}}  w(\mathfrak{an}) \Bigg)\Bigg) 
\end{split}
\end{align}
with coefficients 
\begin{align} \label{4.18}
\begin{split}
    a_{\mathfrak{a}}(t):= &
       \begin{cases}
       \sin{(t\mathcal{N}(\mathfrak{p}_s))} & \mbox{ if there exists } (\mathfrak{p}_1,...,\mathfrak{p}_s)\in\mathcal{P}_s \mbox{ such that } \mathfrak{a}=\mathfrak{p}_1
\cdots \mathfrak{p}_s,   \\
       0 & \mbox{ otherwise,}
     \end{cases}\\
    b_\mathfrak{n}(t):=&\sum_{\mathfrak{p}\in\mathbb{P}(z)}\mathop{\sum\sum}\limits_{\substack{\mathfrak{b},\mathfrak{d}\\ \mathfrak{d}|\Pi(\mathfrak{p})\\ \mathfrak{bpd=n}}}e^{it/2}e^{it\mathcal{N}(\mathfrak{p})}\mu(\mathfrak{d}),\\
    a_{\mathfrak{a}}(t,\tau):=& a_{\mathfrak{a}}(t)\mathcal{N}(\mathfrak{a})^{i\tau}\sin{(\tau\log(M+\rho))},\\
    b_{\mathfrak{n}}(t,\tau):=&\sum_{\mathfrak{p}\in\mathbb{P}(z)}\mathop{\sum\sum}\limits_{\substack{\mathfrak{b},\mathfrak{d}\\ 
\mathfrak{d}|\Pi(\mathfrak{p})\\ \mathfrak{bpd=n}}}e^{\frac{it}{2}}e^{it\mathcal{N}(\mathfrak{p})}\mu(\mathfrak{d})(\mathcal{N}\mathfrak{pd})^{i\tau}.
\end{split}
\end{align}\par
We proceed by gathering some intermediate information before applying \eqref{4.3}. Clearly, 
\begin{align*}
     |b_\mathfrak{n}(t)|,|b_{\mathfrak{n}}(t,\tau)|\leq d(\mathfrak{n}).
\end{align*}
For the other coefficients we always have
\begin{align*}
     |a_\mathfrak{a}(t)|,|a_{\mathfrak{a}}(t,\tau)|\leq 1,
\end{align*}
yet if $t$ and $\tau$ are small, we can do better: if $|t|\leq x^{-1/2}$ and $|\tau|\leq (\log(x+1/2))^{-1}$, then
\begin{align} \label{4.19}
    |a_\mathfrak{a}(t)|\leq\sqrt{x}|t|, \quad |a_{\mathfrak{a}}(t,\tau)|\leq\sqrt{x}|t\tau|\log\left(x+\frac{1}{2}\right). 
\end{align}
In view of this, we must deal with functions $f:\mathbb{R} \times (1,\infty)\longrightarrow \mathbb{R}$ of the shape
\begin{align*}
    f(t,\eta)=
       \begin{cases}
       \eta |t| \quad if\quad |t|\leq \eta^{-1},\\ 
       1 \quad \text{otherwise}\\
     \end{cases}
\end{align*}
and their integrals
\begin{align} \label{4.20}
    \int_{-T}^{T} f(t,\eta)\frac{dt}{|t|} \ll \eta \int_{0}^{\eta^{-1}} dt + \Bigg|\int_{\eta^{-1}}^T \frac{dt}{t}\Bigg|\ll 1+|\log{(T\eta)}|. 
\end{align}
Lastly, we note that by \eqref{4.1},
\begin{align} \label{4.21}
   \sum_{\substack{(\mathfrak{p}_1,...,\mathfrak{p}_s)\in\mathcal{P}_s \\ \mathfrak{a}=\mathfrak{p}_1\cdots \mathfrak{p}_s}}\sum_{\mathfrak{n}\in\mathcal{I}\setminus{0}} \sum_{\mathfrak{p}\prec \mathfrak{p}_s}\mathop{\sum\sum}\limits_{\substack{\mathfrak{b},\mathfrak{d}\\ \mathfrak{d}|\Pi(\mathfrak{p})\\ \mathfrak{bpd=n}}}  w(\mathfrak{an}) \ll
   \sum_{\mathfrak{a}\in \mathcal{I}\setminus{0}} d_4(\mathfrak{a})w(\mathfrak{a})\ll X. 
\end{align}\par
Gathering all information we got so far, we may derive a bound for 
\begin{align*}
    \mathcal{E}^{<} = \left|S_{2}^{\flat,<}(s,\omega)-S_{2}^{\flat,<}(s,\Tilde{\omega})\right|
\end{align*}
as follows: after applying \eqref{4.17} with $w=\omega$ and $w=\Tilde{\omega}$, the $O$-terms are treated directly with \eqref{4.20} and \eqref{4.21},
whereas for the rest one may apply \eqref{4.3}. Here it is important to use \eqref{4.19} for small $|t|$ and $|\tau|$ first - prior to applying \eqref{4.3} - and
\eqref{4.20} then bounds the integrals. Therefore, after some computations, we infer 
\begin{align}\label{4.22}
\begin{split}
     \mathcal{E}^{<} \ll  Y\log(Tx)\left(1+\log\left(T\log\left(x+1/2\right)\right)\right)
     + XT^{-1}\left(x+\log\left(T\log\left(x+1/2\right)\right)\right). 
\end{split}
\end{align}
Of course, the same arguments also apply to  
$$
\mathcal{E}^{=} = \left|S_{2}^{\flat,=}(s,\omega)-S_{2}^{\flat,=}(s,\Tilde{\omega})\right|:
$$
In view of \eqref{4.16}, we have to apply them twice, but in both cases the coefficients corresponding to \eqref{4.18} obey the same bounds we used to derive \eqref{4.22}. Consequently, \eqref{4.22} also holds with $S_{2}^{\flat,=}$ in place of $S_{2}^{\flat,<}$. In total, recalling \eqref{4.12} , \eqref{4.13} and \eqref{4.15}, we have 
\begin{align*}
  |S^{\flat}(s)|\ll Y + \mbox{the bound from \eqref{4.22}}
\end{align*}
and it transpires that choosing $T=xX$ suffices to yield a bound of $\ll Y(\log(xX))^2$. On plugging this into \eqref{4.11} and recalling \eqref{4.10}, we infer \eqref{4.7}. Hence, the theorem is proved. $\Box$

\end{document}